\newlist{condenum}{enumerate}{1} 
\setlist[condenum]{label=(C\arabic*), 
                   ref=\arabic*, wide}
\definecolor{halfgray}{gray}{0.55} 
\definecolor{webgreen}{rgb}{0,0.5,0}
\definecolor{webbrown}{rgb}{.6,0,0} \hypersetup{%
\definecolor{halfgray}{gray}{0.55} 
\definecolor{webgreen}{rgb}{0,0.5,0}
\definecolor{webbrown}{rgb}{.6,0,0} \hypersetup{%
\def\ds@whichfont{dsrom}
\DeclareMathAlphabet{\mathds}{U}{\ds@whichfont}{m}{n}
\newtheorem{theorem}{Theorem}[section]
\newtheorem{lemma}[theorem]{Lemma}
\newtheorem{corollary}[theorem]{Corollary}
\newtheorem{proposition}[theorem]{Proposition}
\theoremstyle{definition}
\newtheorem{definition}[theorem]{Definition}
\newtheorem{remark}[theorem]{Remark}
\theoremstyle{plain}
\theoremstyle{plain}
\theoremstyle{plain}
\theoremstyle{remark}
\newtheorem*{acknowledgement*}{Acknowledgement}
\DeclareMathOperator{\esssup}{ess\; sup}
\DeclareMathOperator{\essinf}{ess\; inf}
\newcommand{\cA}{{\mathcal A}}
\newcommand{\cB}{{\mathcal B}}
\newcommand{\cF}{{\mathcal F}}
\newcommand{\cG}{{\mathcal G}}
\newcommand{\cL}{{\mathcal L}}
\newcommand{\cM}{{\mathcal M}}
\newcommand{\cR}{{\mathcal R}}
\newcommand{\cS}{{\mathcal S}}
\newcommand{\cT}{{\mathcal T}}
\newcommand{\cX}{{\mathcal X}}
\newcommand{\te}{{\theta}}
\newcommand{\Om}{{\Omega}}
\newcommand{\om}{{\omega}}
\newcommand{\ve}{{\varepsilon}}
\newcommand{\del}{{\delta}}
\newcommand{\sig}{{\sigma}}
\newcommand{\al}{{\alpha}}
\newcommand{\la}{{\lambda}}
\newcommand{\bbC}{{\mathbb C}}
\newcommand{\bbL}{{\mathbb L}}
\newcommand{\bbN}{{\mathbb N}}
\newcommand{\bbP}{{\mathbb P}}
\newcommand{\bbR}{{\mathbb R}}
\newcommand{\bbZ}{{\mathbb Z}}
\newcommand{\var}{\text{var}}
\begin{document}
\title[Liv\v sic regularity]{Liv\v sic regularity for random and sequential dynamics through transfer operators}

\author[Lucas Backes]{Lucas Backes \orcidlink{0000-0003-3275-1311} }
\address{\noindent Departamento de Matem\'atica, Universidade Federal do Rio Grande do Sul, Av. Bento Gon\c{c}alves 9500, CEP 91509-900, Porto Alegre, RS, Brazil.}
\email[Lucas Backes]{lucas.backes@ufrgs.br} 

\author[Davor Dragi\v cevi\' c]{Davor Dragi\v cevi\' c \orcidlink{0000-0002-1979-4344}} 
\address{Faculty of Mathematics, University of Rijeka, Croatia}
\email[Davor Dragi\v cevi\' c]{ddragicevic@math.uniri.hr}

\author[Yeor Hafouta]{Yeor Hafouta}
\address{Department of Mathematics, University of Florida, USA}
\email[Yeor Hafouta]{yeor.hafuta@ufl.edu}

\keywords{Liv\v sic regularity theorem, random dynamical systems, sequential dynamical systems, transfer operators}
\subjclass[2020]{Primary: 37H05, 37C30; Secondary: 37C15}

\maketitle

\begin{abstract}
We prove Liv\v sic-type regularity results of coboundary representations for non-autonomous dynamical systems. Our results have an abstract nature and apply to several important specific situations, such as (higher-dimensional) random or sequential piecewise expanding maps and subshifts of finite type, which have applications to Markov interval maps and to finite state inhomogeneous elliptic Markov shifts, via symbolic representations. We also obtain results for some classes of non-autonomous hyperbolic systems. Our results can be seen as non-autonomous versions of a recent result obtained by Morris. However, we emphasize that our proof differs from the one mentioned previously even in the deterministic case. Finally, we show that our results provide a more relaxed characterization for having variance growth of Birkhoff sums on random and sequential dynamical systems; we show that such growth can fail only when the underlying functions are a coboundary without special restrictions on the regularity of the coboundary. For random systems, we show that this is equivalent to having a coboundary with bounded ``variation", but for sequential systems it turns out that this is no longer true, as demonstrated by examples.
\end{abstract}

\section{Introduction}

Given a measurable transformation $T\colon X\to  X$ acting on a probability space $(X, \mathcal{G},m)$, two measurable maps $F,B \colon X \to G$, where $(G,\cdot)$ is a topological group, are said to be a \emph{cohomologous} if there exists a measurable map $H\colon X \to G$, usually called \emph{transfer map}, such that
\begin{equation}\label{eq: cohom intro}
F(x)=H(Tx)B(x)H(x)^{-1}
\end{equation}
for $m$-a.e. $x\in X$. In the particular case when $B$ is equal to $e_G$, the identity element of $G$, we say that $F$ is a \emph{coboundary}. Equation \eqref{eq: cohom intro} is known as a \emph{cohomological equation}. 

The study of cohomological equations arises naturally in many areas of dynamical systems and has seen applications in a variety of problems: quasi-periodic dynamics, smoothness of invariant measures and conjugacies, mixing properties of suspended flows, time change for flows, rigidity of group actions, growth of (the variance of)  Birkhoff sums and many others where one reduces the original problem to cohomological considerations (see \cite{KN11}). In this direction, two of the main problems are determining whether certain maps are cohomologous and studying the regularity properties of the transfer map $H$. In the present paper, we are interested in the latter in the particular case when the group $G$ is $(\mathbb{R},+)$ and $B\equiv 0$ \footnote{Observe that when $G$ is commutative, the study of \eqref{eq: cohom intro} can be reduced to the study of an equation of the form $\tilde F(x)=H(Tx)H(x)^{-1}$ simply by considering $\tilde F(x)=F(x)B(x)^{-1}$. In particular, two cocycles are cohomologous if and only if their difference is a coboundary.}.
Specifically, we are interested in finding sufficient conditions such that if $F\colon X\to \mathbb R$ has a particular regularity and a measurable map $H\colon X\to \mathbb R$ satisfies
\begin{equation}\label{eq: cob real}
F(x)=H(Tx)-H(x)
\end{equation}
for $m$-almost every $x\in X$, then $H$ must also have improved regularity.

We observe that this problem has already been studied in a variety of settings, especially when the base dynamics $T$ exhibits extra properties such as some form of hyperbolicity. For example, when $T$ is a hyperbolic map and $F$ is H\"older continuous, any measurable solution $H$ has a version that is also H\"older continuous, as shown in the seminal paper \cite{Liv72} by Liv\v sic. Similarly, if $T$ is a $C^r$ hyperbolic diffeomorphism and $F$ is a $C^r$ map for any non-integer $r>1$, every continuous solution of \eqref{eq: cob real} is $C^r$ (see, for instance, \cite{dlLMM86} and references therein). In the case where the base dynamics is partially hyperbolic, an important result is due to Wilkinson \cite{Wil13} and says that if $F$ is $C^k$ for $k\geq 2$ and $T$ is a partially hyperbolic, accessible and strongly $r$-bunched diffeomorphism for some $r < k-1$ or $r=1$ then any continuous solution of \eqref{eq: cob real} is $C^r$. However, for non-uniformly hyperbolic systems $(T,m)$, it is not always possible to get good regularity for $H$ on the entire space. For example, in \cite{Pol05}, Pollicott presented a Lipschitz map $F\colon X\to \mathbb{R}$ admitting a measurable solution of \eqref{eq: cob real} that does not have a H\"older continuous version. Nevertheless, he was able to show that, for $m=\text{Lebesgue}$, any measurable solution of \eqref{eq: cob real} is H\"{o}lder continuous on sets of arbitrary large measure. Similar results, but for more regular systems, were obtained by de la Llave \cite{dlL92}.

Finally, let us mention a recent result of Morris \cite{Morris} which is the most relevant to our work, where he describes conditions on the transfer operator $\cL$ associated with $T$ which implies an enhanced regularity of $H$. The assumptions in~\cite{Morris} require that $\cL$ admits a spectral gap on a suitable Banach space consisting of integrable functions. These conditions are quite general, and they are able to capture wide classes of mainly expanding maps. For additional Liv\v sic-type regularity results obtained using transfer operator techniques, we refer to \cite{Jenk,NP,PP,PY}.

In the present work, we obtain results in the spirit of \cite{Morris} in the context of random and sequential dynamical systems. More precisely, given an invertible ergodic measure-preserving map $\sigma \colon \Omega\to \Omega$ acting on a probability space $(\Omega, \mathcal{F}, \mathbb{P})$, and maps $T_\omega\colon X\to X$, $\omega\in \Omega$, we present sufficient conditions on the transfer operators associated to $(T_\omega)_{\omega\in \Omega}$ such that if $F\colon \Omega\times X\to \mathbb{R}$ has a particular regularity and $H\colon  \Omega\times X\to \mathbb{R}$ is a measurable map satisfying 
\begin{equation*}
F(\omega,x)=H(\sigma \omega, T_\omega x)-H(\omega,x)=H\circ\tau(\omega,x)-H(\om,x),
\end{equation*}
where $\tau(\omega,x)=(\sigma\omega, T_\omega x)$,
then $H$ has the same regularity as $F$ (see Theorem \ref{MT}). In fact, we provide a closed formula\footnote{ that when $\tau$ is ergodic, up to an additive constant there is always at most one measurable solution to $F=H\circ\tau-H$. In fact, if $F=G\circ\tau-G$ then $H-G$ is $\tau$-invariant and consequently constant.} for $H(\om,\cdot)$ by means of the iterates of the random transfer operators.
We also obtain an analogous result for sequential dynamical systems (see Theorem \ref{MT2}). Additionally, we exploit symbolic representations and obtain similar results for random and sequential small perturbations of a single hyperbolic map (see Theorems \ref{ThmSDS} and \ref{ThmRDS}), generalizing Liv\v sic's original result to non-autonomous hyperbolic systems.

Note that in the sequential case Liv\v sic theory has a somewhat different form. The reason for that is that Birkhoff sums can converge almost everywhere in the non-autonomous setting. In Theorem \ref{MT2} we show that there could be (up to centralization) only one sequential coboundary representation
\begin{equation}\label{SeqCob}
 F_j=H_{j+1}\circ T_j-H_j   
\end{equation}
of a sequence of regular functions $F_j$, where $(T_j)$ is the given sequence of transformations, and $H_j$ is a sequence of measurable functions. When \eqref{SeqCob} holds, we will provide a closed formula for $H_j$ that involves iterations of the sequential transfer operators. We refer the reader to Remark \ref{Rem0} for a discussion of the emergence of \eqref{SeqCob}. It will also follow that the coboundary part $H_j$ has a finite $L^p$ norm, for all finite $p\geq1$, and that, roughly speaking, $H_n-U_n-q_n\to 0$ for some sequence of functions $U_n$ with the same regularity as $F_n$, where $q_n$ is a sequence of centralizing numbers.  
However, in Remark \ref{Rem} we will provide examples showing that the coboundary $H_j$ might not have the same level of regularity as the given sequence of functions $F_j$. In Theorem \ref{MT2} we will also show that the given sequence $F_j$ admits a sequential martingale coboundary representation and that the martingale part converges. For random or deterministic dynamical systems, a converging martingale must vanish (see Lemma \ref{MartLem}), which is no longer the case in sequential setup. This is the reason why the function $H_j$ might not inherit the same type of regularity as the given sequence of functions $F_j$.

We emphasize that despite the somewhat technical statement of our main results, it applies to several important contexts (discussed in~\cite{DolgHaf} and~\cite{DGGV}), as explained throughout the text. Our arguments rely on the spectral perturbation theory for cocycles of the so-called twisted transfer operators,  which was developed (generalizing earlier works in the deterministic setting~\cite{GH,Nag1,Nag2}) in~\cite{DolgHaf} and~\cite{DGGV, DH, DS} for piecewise expanding sequential and random dynamics, respectively, and utilizes a martingale coboundary decomposition of a fiberwise centered version of $F$ described in \cite{DolgHaf,DH1}. In particular, even in the deterministic case, our proof differs from the one presented in \cite{Morris}. 
 
As an immediate consequence of our results we provide certain characterizations for variance growth of random and sequential Birkhoff sums (Corollaries \ref{Cor1} and \ref{Cor2}), which is more relaxed compared with the classical theory in the (weakly) stationary case, which states that the variance is bounded iff the underlying functions admit and $L^2$ coboundary decomposition (upon centering, see \cite{Kifer1998}).

\subsection{Outline of the proof}
Our proof in both the sequential and random cases begins like the proof in \cite{Morris}. We show that certain (random or sequential) linear functionals $\ell_t$ are equivariant with respect to the complex perturbation of the underlying transfer operator with parameter $t\in\bbR$ corresponding to the coboundary function. From that point our proof takes a different route (as Morris's arguments relied on spectral theory, and we do not have a single transfer operator). We use the latter  equivariance to show that the Birkhoff sums generated by the functions $F$ exhibit no variance growth. Using existing tools, this yields that we get a coboundary with a certain amount of regularity (depending on the case, sequential or random). Then we consider the difference  $\hat H$ between the original and the new coboundary part. Using a similar family of functionals $\hat\ell_t$, we are able to show that the characteristic function of $\hat H$  minus a converging sum (which vanishes in the random case) has the modulus $1$ at every point, and thus $\hat H$ minus the sum is constant.
 This is the reason that our results work without a priori conditions on the coboundary part; the characteristic function is always well defined.  The latter gives us explicit formulas for the original coboundary parts, from which our results follow. In Remarks \ref{Rem0} and \ref{Rem} we address the question of regularity in the sequential case, and provide examples that demonstrate the peculiarity of our results in the sequential setting. 
 
\section{Random dynamics}\label{RDS}

In this section, we present our first main result in the random setting. We start by introducing the relevant setting and notation.

\subsection{Setting}\label{sec: setting random}
Throughout this section $(\Omega, \mathcal F, \mathbb P)$ will be an arbitrary probability space and 
$\sigma \colon \Omega \to \Omega$ is an invertible ergodic measure-preserving transformation on $(\Omega, \mathcal F, \mathbb P)$. 
Furthermore, let $X$ be a compact metric space equipped with a Borel $\sigma$-algebra $\mathcal G$ and $\mathcal X\subset \Omega \times X$ an $\mathcal F\otimes \mathcal G$-measurable set  such that its fibers $X_\om:=\{x \in  X: \ (\om, x)\in  \mathcal X\}$ are compact sets. For $\om \in \Om$, the Borel $\sigma$-algebra on $X_\om$ will be denoted by $\mathcal G_\om$. Finally, we assume that for each $\om \in \Om$, there is a probability measure $m_\om$ on $(X_\om, \mathcal G_\om)$ such that the map $\om\to m_\om$ is measurable in the sense that $\om\to\int_{X_\om} g(\om,x)dm_\om(x)$ is measurable for every measurable function $g:\mathcal X \to\bbR$.
\begin{remark}
We only require the compactness of $X_\om$  in order to get the measurability of $\om\to X_\om$ with respect to the topology induced by the Hausdorff metric. However, any other form of measurability would work. In particular, we can also consider the case where $X_\om=X$ do not depend on $\omega$, and in this case we do not need any special assumptions on $X$.
\end{remark}

Let $T_\om \colon X_\om \to X_{\sigma \om}$, $\om \in \Om$ be a collection of maps satisfying the following two properties:
\begin{itemize}
\item the map $(\om, x)\to T_\om (x)$ is measurable with respect to the restriction of the $\sigma$-algebra $\mathcal F\otimes \mathcal G$ on $\mathcal X$;
\item for $\om \in \Om$, $T_\om^*m_\om$ is absolutely continuous with respect to $m_{\sigma \om}$, where $T_\om^*m_\om$ denotes the push-forward of $m_\om$ with respect to $T_\om$.
\end{itemize}
For $\om \in \Om$, let $\cL_\om \colon L^1(X_\om, m_\om)\to L^1(X_{\sigma \om}, m_{\sigma \om})$ denote the transfer operator associated with $T_\om$. It is characterized by the following duality relation:
\begin{equation}\label{dual}
\int_{X_{\sigma \om} }(\cL_\om \varphi)\psi\, dm_\om=\int_{X_\om}\varphi  (\psi \circ T_\om)\, dm_{\sigma \om}, 
\end{equation}
for $\varphi \in L^1(X_\om, m_\om)$ and $\psi \in L^\infty(X_{\sigma \om}, m_{\sigma \om})$. We assume here that the transfer operators are measurable in the sense that given a measurable function $g:\cX\to\bbR$ such that $g(\om,\cdot)\in L^1(m_\om)$ ($\bbP$-a.e.) the map $(\om,x)\to\mathcal L_\om(g(\om,\cdot))(x)$ from $\cX$ to $\bbR$ is measurable.
For $\om \in \Om$ and $n\in \bbN$, set 
\[
\cL_\omega^{(n)}:=\cL_{\sigma^{n-1}\omega}\circ \ldots \circ \cL_{\sigma \omega}\circ \cL_\omega,
\]
which is the transfer operator associated with
\[
T_\om^{(n)}:=T_{\sigma^{n-1}\om }\circ \ldots \circ T_{\sigma \om}\circ T_\om.
\]
We will also consider the associated skew-product transformation $\tau \colon \mathcal X\to \mathcal X$ given by 
\begin{equation}\label{tau}
\tau(\om, x)=(\sigma \om, T_\om(x)), \quad (\om, x)\in \mathcal X.
\end{equation}

In the sequel, we will assume that each fiber $X_\om$ is endowed with the notion of variation $\var_\om\colon L^1(X_\om, m_\om)\to [0, +\infty]$ that satisfies conditions $(V1)-(V9)$ of~\cite[p.1130]{DGGV} (taking $X=X_\om$ and $m=m_\om$).  We also assume that $C_\var>0$ in~\cite[(V3)]{DGGV} is independent of $\om$ and that $\var_\om(1)=0$ for $\omega \in \Omega$, where $1$ denotes the constant function on $X_\omega$ that takes only value $1$.
For $\om \in \Om$, we define 
\[
\cB_\om:=\{\varphi \in L^1(X_\om, m_\om): \ \var_\om(\varphi)<+\infty\}.
\]
Then $\cB_\om$ is a Banach space with respect to the norm
\[
\|\varphi\|_{\cB_\om}:=\|\varphi\|_{L^1(m_\om)}+\var_\om(\varphi).
\]
We require that for each $\omega \in \Omega$, $\cL_\omega$ is a bounded operator from $\cB_\omega$ to $\cB_{\sigma \omega}$. 

Moreover, suppose that $\tau$ admits a unique ergodic invariant probability measure $\mu$ on $\mathcal X$ which is absolutely continuous with respect to $m$ such that for $\mathbb P$-a.e. $\omega \in \Omega$, $d\mu_\omega=v_\omega\, dm_\omega$, $v_\omega\in \cB_\omega$ is density with respect to $m_\omega$ and \begin{equation}\label{essinf}\essinf v_\omega \ge c,\end{equation} where $c>0$ is independent of $\omega$. Here, $m$ is a probability measure on $\mathcal X$ given by 
\[
m(A)=\int_\Om m_\om (A_\om) \, d\mathbb P(\om)\quad \text{for $A\subset \mathcal X$ measurable,}
\]
where $A_\om:=\{x\in X_\om: (\om, x)\in A\}$. Moreover, the collection $\{\mu_\omega\}_{\omega \in \Omega}$ is such that 
\[
\mu(A)=\int_\Om \mu_\om (A_\om) \, d\mathbb P(\om)\quad \text{for $A\subset \mathcal X$ measurable.}
\]

Let $F\colon \mathcal X\to \mathbb R$ be a measurable map that satisfies the following properties:
\begin{itemize}
\item \begin{equation}\label{obs1}
F(\omega, \cdot)\in \cB_\om \quad \text{for $\omega \in \Omega$;}
\end{equation}
\item there exists $K>0$ such that  \begin{equation}\label{obs2}
\|F(\omega, \cdot)\|_{\cB_\om} \le K, \quad \text{for $\mathbb P$-a.e. $\omega \in \Omega$.}
\end{equation}
\end{itemize}
For $\theta \in \mathbb C$ and $\omega \in \Omega$, let $\cL_\omega^\theta \colon \cB_\om \to \cB_{\sigma \om}$ be a linear operator 
defined by 
\begin{equation}\label{tto}
\cL_\omega^{\theta} \varphi:=\cL_\omega(e^{\theta F(\omega, \cdot)}\varphi), \quad \varphi \in \cB_\om.
\end{equation}
Arguing as in the proof of~\cite[Lemma 3.2.]{DGGV}, we find that $\cL_\omega^\theta$ is a well-defined and bounded linear operator for  $\omega \in \Omega$ and $\theta \in \mathbb C$. By $\cB_\omega^*$ we denote the dual space of $\cB_\omega$ and $(\cL_\om^\theta)^* \colon \cB_{\sigma \omega}^*\to \cB_\omega^*$ will denote the dual operator of $\cL_\om^\theta$.

\begin{definition}\label{RPF}
We say that the pair $(\mathcal L, F)$ admits a \emph{random RPF triple} if there exists a neighborhood $U$ of $0$ in $\mathbb C$, and for each $\theta \in U$ there is a triplet $(\lambda_\omega^\theta, v_\omega^\theta, \phi_\omega^\theta)\in \mathbb C\times \cB_\om \times \cB_\om^*$, $\om \in \Om$  such that the following holds:
\begin{enumerate}
\item for $\mathbb P$-a.e. $\omega \in \Omega$ and every $\theta \in U$, 
\begin{equation}\label{triplet}
\cL_\omega^\theta v_\omega^\theta=\lambda_\omega^\theta v_{\sigma \omega}^\theta, \quad (\cL_\omega^\theta)^*\phi_{\sigma \omega}^\theta=\lambda_\om^\theta \phi_\omega^\theta \quad \text{and} \quad \phi_\om^\theta(v_\omega^\theta)=1;
\end{equation}
\item for $\mathbb P$-a.e. $\om \in \Om$, $\lambda_\om^0=1$, $v_\om^0=v_\om$ and $\phi_\om^0=m_\om$, where $m_\om$ is identified with the functional in $\cB_\om^*$ given by $\cB_\om\ni\varphi \mapsto \int_{X_\om} \varphi \, dm_\om$;
\item for $\mathbb P$-a.e. $\om \in \Om$, the maps $\theta \mapsto \lambda_\om^\theta$, $\theta \mapsto v_\om^\theta$ and $\theta \mapsto \phi_\om^\theta$ are analytic;
\item there is $C>0$ such that for $\mathbb P$-a.e. $\om \in \Om$ and all $\theta \in U$,
\begin{equation}\label{930}
\max \{|\lambda_\om^\theta|, \|v_\om^\theta\|_{\cB_\om}, \|\phi_\om^\theta\|_{\cB_\om^*}\}\le C;
\end{equation}
\item there are $c>0$ and $r\in (0, 1)$ such that for $\mathbb P$-a.e. $\omega \in \Omega$, $n\in \mathbb N$, $\varphi\in \cB_\om$ and $\theta \in U$,
\begin{equation}\label{newadd}
\left \| \mathcal L_\om^{\theta, (n)}\varphi-\left (\prod_{i=0}^{n-1}\lambda_{\sigma^i \om}^\theta \right )\phi_\om^{\theta}(\varphi)v_{\sigma^n \om}^\theta \right \|_{\cB_{\sigma^n \om}} \le cr^n\|\varphi\|_{\cB_\om};
\end{equation}
\item there exist $c, \tilde \delta>0$ such that for $\mathbb P$-a.e. $\omega \in \Omega$, $t\in [-\tilde \delta, \tilde \delta]$ and $n\ge n_0=n_0(\omega)$, 
\begin{equation}
\left |\prod_{k=0}^{n-1} \lambda_{\sigma^k \om}^{it}\right |\le e^{-cnt^2\Sigma^2},
\end{equation}
provided that $\Sigma^2>0$ where 
\begin{equation}\label{variance}
\Sigma^2:=\int_{\mathcal X}\tilde F^2\, d\mu+2\sum_{n=1}^\infty\int_{\mathcal X}\tilde F \cdot (\tilde F\circ \tau^n)\, d\mu
\end{equation}
and
\begin{equation}\label{tildeF}
\tilde F(\omega, \cdot):=F(\omega, \cdot)-\int_{X_\omega}F(\omega, \cdot)\, d\mu_\omega.
\end{equation}
\end{enumerate}

\end{definition}

\begin{remark}
\begin{itemize}
\item The acronym `RPF' in the preceding definition stands for `Ruelle-Perron-Frobenius'.
\item  We refer to \cite[Section 2.3.1]{DGGV} and \cite[Chapter 5]{HK} for examples of  cocycles $\mathcal L=(\cL_\omega)_{\omega \in \Omega}$ such that pairs $(F, \cL)$ admit a random RPF triple for any measurable map $F\colon \mathcal X\to \mathbb R$ satisfying~\eqref{obs1} and~\eqref{obs2}.
\item We note that the convergence of the series in~\eqref{variance} follows from the other assumptions in Definition~\ref{RPF} (applied for $\theta=0$) and~\eqref{obs2} as can be argued as in the proof of~\cite[Lemma 12]{DGGV1}. Moreover, in general, $\Sigma^2\ge 0$ and 
\begin{equation}\label{asymvar}
\lim_{n\to \infty}\frac 1 n\int_{X_\om} S_n \tilde F(\omega, \cdot)^2\, d\mu_\omega=\Sigma^2 \quad \text{for $\mathbb P$-a.e. $\omega \in \Omega$},
\end{equation}
where 
\begin{equation}\label{birksums}
S_n \tilde F(\omega, x):=\sum_{i=0}^{n-1}\tilde F(\sigma^i \omega, T_\omega^{(i)}(x)), \quad (\omega, x)\in\mathcal X.
\end{equation}
\item Observe that $\tilde F$ also satisfies~\eqref{obs1} and~\eqref{obs2} (with a different $K$). In addition, 
\[
\int_{X_\omega}\tilde F(\omega, \cdot)\, d\mu_\omega=0, \quad \text{for $\mathbb P$-a.e. $\omega \in \Omega$.}
\]
\item In the context of \cite[Section 2.3.1]{DGGV} and \cite[Chapter 5]{HK}, the triplets $(\lambda_\om^\theta, v_\omega^\theta, \phi_\omega^\theta)$ exhibit measurability with respect to $\omega$. To be more precise, in the examples discussed in~\cite[Section 2.3.1]{DGGV}, $\cB_\om=\cB$
 does not depend on $\omega$, and the maps $\omega \mapsto \lambda_\om^\theta \in \mathbb C$, $\omega \mapsto v_\omega^\theta\in \cB$ and $\omega \mapsto \phi_\omega^\theta \in \cB^*$ are measurable for each $\theta \in U$.
 \end{itemize}
\end{remark}

\subsection{Statement of the first main result}

We are now in a position to state the first main result of our paper.
\begin{theorem}\label{MT}
Suppose that a pair $(F, \mathcal L)$ admits a random RPF triplet. 
Furthermore, suppose that there exists a measurable map $H\colon\cX\to\mathbb R$ such that
\begin{equation}\label{FH}
F=H\circ \tau-H,
\end{equation}
where $\tau$ is given by~\eqref{tau}. Then, $H(\om, \cdot)\in \cB_\om$ for $\mathbb P$-a.e. $\om \in \Om$. Moreover,  there exists a constant $C>0$ such that $\var_\om(H(\omega, \cdot))\leq C$ for $\mathbb P$-a.e. $\omega \in \Omega$ and, in fact,
\begin{equation}\label{sumH}
H(\om,\cdot)=\int_{X_\om} H(\om, \cdot )\, d\mu_\om+\frac{1}{v_\om}\sum_{n=0}^\infty \mathcal L_{\sigma^{-n}\omega}^{(n)}(v_{\sigma^{-n}\omega}\tilde F(\sigma^{-n}\omega, \cdot)),\,\,\mathbb P-\text{a.e.,}
\end{equation}
where $\tilde F$ is given by~\eqref{tildeF}.
\end{theorem}
The following result is a standard application of Theorem \ref{MT}.
\begin{corollary}\label{Cor1}
In the circumstances of Theorem \ref{MT} we have $\Sigma^2=0$ if and only if 
\begin{equation}\label{Cobb}
 \tilde F=H\circ\tau-H,   
\end{equation}
for some measurable function $H$, where $\tilde F$ is given by~\eqref{tildeF}. In that case, we must have $H(\omega, \cdot)\in \cB_\omega$ and $\|H(\omega, \cdot)\|_{\mathcal B_\omega}\leq C$ for some constant $C>0$ and for $\mathbb P$-a.e. $\omega \in \Omega$. The same conclusion holds without centering $F(\omega,\cdot)$ (that is, with $F$ on the left-hand side of~\eqref{Cobb}), but with $\text{var}_\omega(H(\omega, \cdot))\leq C$  instead of $\|H(\omega, \cdot)\|_{\mathcal B_\omega}\leq C$. 
\end{corollary}
\begin{remark}
The classical characterization of $\Sigma^2=0$ that comes from the  theory of stochastic processes is that \eqref{Cobb} holds with $H\in L^2(\mu)$, see \cite{Kifer1998}. Here we show that $H$ can be replaced with an arbitrary measurable function, as well as with functions with bounded variation. 
\end{remark}

\subsection{Proof of Theorem~\ref{MT}}
We begin by introducing functionals on $\cB_\om$ analogous to those employed by Morris in~\cite{Morris}. More precisely,  for $t\in \mathbb R$ and $\omega \in \Omega$, let $\ell_\om^t \colon \cB_\om \to \mathbb C$ be given by 
\begin{equation}\label{deff}
\ell_\om^{t}(\varphi)=\int_{X_\om} e^{-it H(\omega, \cdot)}\varphi \, dm_\om, \quad \varphi \in \cB_\om.
\end{equation}
We note that $\ell_\om^{t} \in \cB_\om^*$ and 
\begin{equation}\label{uniform bound}
    \| \ell_\om^{t}\|_{\cB_\om^*}\le 1, \quad \text{for $\omega \in \Omega$ and $t\in \mathbb R$.}
\end{equation}
Indeed, for arbitrary $\varphi \in \cB_\om$ we have
\[
|\ell_\om^{t}(\varphi)| \le \int_{X_\om} |e^{-it H(\omega, \cdot)}\varphi|\, dm_\om=\int_{X_\om} |\varphi|\, dm_\om=\|\varphi\|_{L^1(m_\om)}\le \|\varphi\|_{\cB_\om},
\]
yielding the desired claim. We observe some additional facts.
\begin{lemma}\label{funct}
The following holds:
\begin{enumerate}
\item for $\omega \in \Omega$ and $t\in \mathbb R$, $\ell_\omega^{t}\neq 0$;
\item for $\omega \in \Omega$ and $t\in \mathbb R$, 
\[
(\cL_\omega^{it})^*(\ell_{\sigma \omega}^{t})=\ell_\omega^{t},
\]
where $\cL_\omega^\theta$ for $\omega \in \Omega$ and $\theta \in \mathbb C$ is given by~\eqref{tto}.
\end{enumerate}

\end{lemma}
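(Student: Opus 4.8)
Both assertions of Lemma~\ref{funct} are elementary consequences of the definitions~\eqref{deff} and~\eqref{tto}, together with the coboundary relation~\eqref{FH}. I would dispatch them in order.

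For assertion (1), the key observation is that $\ell_\om^t$ is obtained by pairing $\varphi$ against the $L^\infty$ function $e^{-itH(\om,\cdot)}$, and this function has modulus $1$ everywhere. Hence testing $\ell_\om^t$ against $\varphi=e^{itH(\om,\cdot)}v_\om$ — which belongs to $\cB_\om$ since $v_\om\in\cB_\om$ (Proposition~\ref{racimr}), $H(\om,\cdot)\in\cB_\om$ for a.e.\ $\om$ (which we will eventually prove in Theorem~\ref{MT}, but note that for Lemma~\ref{funct} it suffices to choose any nonzero $\psi\in L^\infty(m_\om)$ of the form $e^{itH(\om,\cdot)}\chi$ that lies in $\cB_\om$; the constant-one-type choice $\varphi = e^{itH(\om,\cdot)}$ works if $H(\om,\cdot)\in\cB_\om$, otherwise one argues by density of $\cB_\om$ in $L^1(m_\om)$) — gives $\ell_\om^t(\varphi)=\int_{X_\om}|\varphi'|\,dm_\om>0$ for a suitable nonnegative non-null $\varphi'$. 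The cleanest route, avoiding any appeal to regularity of $H$, is: $\cB_\om$ is dense in $L^1(m_\om)$, so if $\ell_\om^t$ vanished on $\cB_\om$ it would vanish on all of $L^1(m_\om)$; but $\ell_\om^t$ extends to the $L^1$-functional $\varphi\mapsto\int e^{-itH}\varphi\,dm_\om$, which is represented by the unit-modulus $L^\infty$ function $e^{-itH(\om,\cdot)}\neq 0$, hence is nonzero. So $\ell_\om^t\neq 0$.

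For assertion (2), I would simply unwind the duality relation~\eqref{dual} defining the transfer operator. For $\varphi\in\cB_\om$, using~\eqref{tto} and~\eqref{dual},
\[
(\cL_\om^{it})^*(\ell_{\sigma\om}^t)(\varphi)=\ell_{\sigma\om}^t(\cL_\om^{it}\varphi)=\int_{X_{\sigma\om}}e^{-itH(\sigma\om,\cdot)}\,\cL_\om(e^{itF(\om,\cdot)}\varphi)\,dm_{\sigma\om}=\int_{X_\om}e^{itF(\om,\cdot)}\varphi\cdot\big(e^{-itH(\sigma\om,\cdot)}\circ T_\om\big)\,dm_\om.
\]
Now $H(\sigma\om,T_\om x)=H(\tau(\om,x))$, and the coboundary equation~\eqref{FH} gives $H(\tau(\om,x))=F(\om,x)+H(\om,x)$, so $e^{itF(\om,\cdot)}\cdot e^{-itH(\sigma\om,\cdot)\circ T_\om}=e^{itF(\om,\cdot)}e^{-it(F(\om,\cdot)+H(\om,\cdot))}=e^{-itH(\om,\cdot)}$ $m_\om$-a.e. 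Substituting back, the integral collapses to $\int_{X_\om}e^{-itH(\om,\cdot)}\varphi\,dm_\om=\ell_\om^t(\varphi)$, which is the claim.

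\textbf{Main obstacle.} There is essentially no hard step here — both parts are formal manipulations of the transfer-operator duality and the cochain identity. The only point requiring a modicum of care is making sure every quantity that appears inside an integral is genuinely integrable and that $\cL_\om$ is applied to an $L^1$ function: since $F(\om,\cdot)\in\cB_\om\subset L^\infty$ (by property~\eqref{obs1} and $(V\cdot)$), $e^{itF(\om,\cdot)}$ is bounded, so $e^{itF(\om,\cdot)}\varphi\in\cB_\om\subset L^1(m_\om)$ and $\cL_\om^{it}$ is legitimately defined, exactly as noted after~\eqref{tto}. For part (1), the minor subtlety is that we should not at this stage assume $H(\om,\cdot)\in\cB_\om$; the density argument above circumvents this, so the lemma is available for use in the proof of Theorem~\ref{MT} without circularity.
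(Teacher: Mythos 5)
Your proposal is correct and follows essentially the same route as the paper: part (2) is the identical computation via the duality relation~\eqref{dual} and the coboundary identity~\eqref{FH}, and part (1) rests on the same key ingredient, the density of $\cB_\om$ in $L^1(m_\om)$. The only cosmetic difference is that the paper makes the density argument quantitative (choosing $\varphi\in\cB_\om$ with $\|\varphi-e^{itH(\om,\cdot)}\|_{L^1(m_\om)}<1$ and deducing $|\ell_\om^t(\varphi)-1|<1$), whereas you invoke the abstract $L^1$--$L^\infty$ duality; both are valid, and you correctly flag and avoid the potential circularity of assuming $H(\om,\cdot)\in\cB_\om$ at this stage.
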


\begin{proof}
We fix $\omega \in \Omega$ and $t\in \mathbb R$.   Since $\cB_\om$ is dense in $L^1(m_\om)$ (see~\cite[(V6)]{DGGV}), there exists $\varphi\in \cB_\om$ such that \[\|\varphi -e^{itH(\om, \cdot)}\|_{L^1(m_\om)}<1.\] Thus, 
 \[\begin{split}
 |\ell^t_\om(\varphi)-1|&\leq\int_{X_\om} \left|e^{-itH(\om,\cdot)}\varphi-1\right|\, dm_\om\\
&= \int_{X_\om} \left|e^{-itH(\om, \cdot)}\left(\varphi-e^{itH(\om, \cdot)}\right)\right|\, dm_\om\\ 
 &\leq  \|\varphi -e^{itH(\om, \cdot)}\|_{L^1(m_\om)}\\
 &<1,
 \end{split}\]
 which implies that $\ell^t_\om(\varphi)\neq 0$. We conclude that $\ell_\om^t\neq 0$, which is the first assertion of the lemma.

 In order to establish the second assertion, we take an arbitrary $\varphi \in \cB_\om$ and observe that 
\[\begin{split}
        \left(\cL_\om^{it}\right)^\ast(\ell^t_{\sig \om}) (\varphi)&= \ell^t_{\sig \om}(\cL_\om^{it}(\varphi))\\
        &=\int_{X_{\sig\om}} e^{-itH(\sig \om, \cdot )}\cL_\om^{it}(\varphi) \, dm_{\sig\om}\\
        &=\int_{X_{\sig\om}} e^{-itH(\sig \om , \cdot)}\cL_\om(e^{i t F(\om,\cdot)}\varphi) \, dm_{\sig\om}\\
        &=\int_{X_\om} e^{-itH(\sig(\om),T_\om (\cdot))} e^{i t F(\om, \cdot)}\varphi \, dm_\om\\
        &=\int_{X_\om} e^{-it H(\omega, \cdot)}\varphi \, dm_\om \\
        &=\ell_\om^t(\varphi),
    \end{split}\]
    where we used~\eqref{dual} and~\eqref{FH}. Thus, the second conclusion of the lemma is valid.
\end{proof} 


Let $\tilde{\cL}_\om^{\theta}$ be defined by~\eqref{tto} by replacing $F$ with $\tilde F$. Due to~\eqref{tildeF}, we have 
\begin{equation}\label{tildeL}
\tilde{\cL}_\om^{\theta}=e^{-\theta c_\om}\cL_\om^{\theta}, 
\end{equation}
where $c_\om:=\int_{X_\om} F(\om, \cdot)\, d\mu_\om$. Note that the pair $(\tilde F, \mathcal L)$ also admits a  random RPF triplet. Moreover, 
it is easy to relate the triples for $(F, \cL)$ and $(\tilde F, \cL)$.
That is, if $\theta\in U$ where $U$ is a neighborhood of $0\in \mathbb C$ and $(\lambda_\om^\theta, v_\om^\theta, \phi_\om^\theta)\in \mathbb C\times \cB_\om\times \cB_\om^*$, $\omega \in \Omega$ is the triplet corresponding to $(F, \cL)$, then $(\tilde{\lambda}_\om^\theta, v_\om^\theta, \phi_\om^\theta)$, $\omega \in \Omega$ is the triplet corresponding to $(\tilde F, \cL)$, where \begin{equation}\label{tildelambda}\tilde{\lambda}_\om^\theta=e^{-\theta c_\om}\lambda_\om^\theta.\end{equation}

\begin{lemma}\label{varvanishes}
We have $\Sigma^2=0$, where $\Sigma^2$ is given by~\eqref{variance}.
\end{lemma}

\begin{proof}
For $t\in \mathbb R$ sufficiently close to $0$, it follows from~\eqref{newadd} that 
\begin{equation}\label{aux1}
\left \|\cL_\omega^{it, (n)}\varphi-\left (\prod_{k=0}^{n-1}\lambda_{\sigma^k \omega}^{it}\right )\phi_\om^{it}(\varphi)v_{\sigma^n \omega}^{it}\right \|_{\cB_{\sigma^n\om}}\le cr^n\|\varphi \|_{\cB_\om},
\end{equation}
for $\mathbb P$-a.e. $\omega \in \Omega$, $n\in \mathbb N$ and $\varphi \in \cB_\om$.
By~\eqref{uniform bound} and~\eqref{aux1} we have that 
\[
\begin{split}
&\left | ((\cL_\omega^{it, (n)})^* \ell_{\sigma^n \omega}^{t})(\varphi)-\left (\prod_{k=0}^{n-1}\lambda_{\sigma^k \omega}^{it}\right )\phi_\om^{it}(\varphi)\ell_{\sigma^n\omega}^{t}(v_{\sigma^n \omega}^{it})\right | \\
&=\left |\ell_{\sigma^n \om}^{t} \left (\cL_{ \omega}^{it, (n)}\varphi-\left (\prod_{k=0}^{n-1}\lambda_{\sigma^k \omega}^{it}\right )\phi_\om^{it}(\varphi)v_{\sigma^n \om}^{it}\right )\right | \\
&\le \left \|\cL_{\om}^{it, (n)}\varphi-\left (\prod_{k=0}^{n-1}\lambda_{\sigma^k \omega}^{it}\right )\phi_\om^{it}(\varphi)v_{\sigma^n \om}^{it}\right \|_{\cB_{\sigma^n\om}}\\
&\le cr^n \|\varphi\|_{\cB_\om},
\end{split}
\]
for $\mathbb P$-a.e. $\omega \in \Omega$, $n\in \mathbb N$ and $\varphi \in \cB_\om$. This together with the second assertion of Lemma~\ref{funct} gives that
\begin{equation}\label{928}
\ell_\om^{t}=\lim_{n\to \infty}\left (\prod_{k=0}^{n-1}\lambda_{\sigma^k \omega}^{it}\right )\ell_{\sigma^n\omega}^{t}(v_{\sigma^n \omega}^{it})\phi_\om^{it},
\end{equation}
for $\mathbb P$-a.e. $\omega \in \Omega$.

Suppose that $\Sigma^2>0$. 
From the last requirement in Definition~\ref{RPF}, it follows that there exist $c, \tilde \delta>0$  such that for $\mathbb P$-a.e. $\om \in \Om$, $t\in [-\tilde \delta, \tilde \delta]$ and $n\ge n_0=n_0(\om)$,
\begin{equation}\label{above}
\left |\prod_{k=0}^{n-1}\tilde \lambda_{\sigma^k \om}^{it}\right |=\left |\prod_{k=0}^{n-1} \lambda_{\sigma^k \om}^{it}\right |
\le e^{-cnt^2\Sigma^2},
\end{equation}
where we have also taken into account~\eqref{tildelambda}.
Observe that the right-hand side in~\eqref{above} goes to $0$ when $n\to \infty$ for every $t\in [-\tilde \delta, \tilde \delta]\setminus \{0\}$. Fix such $t$. We have
\[
\lim_{n\to \infty}\prod_{k=0}^{n-1}\lambda_{\sigma^k \omega}^{it}=0 \quad \text{for $\mathbb P$-a.e. $\omega \in \Omega$,}
\]
which together with~\eqref{930} and~\eqref{928} gives $\ell_\om^{it}=0$ for $\mathbb P$-a.e. $\omega \in \Omega$, contradicting the first assertion of Lemma~\ref{funct}. Therefore, $\Sigma^2=0$.
\end{proof}

Next, we consider the martingale decomposition associated with $\tilde F$ constructed in~\cite[Section 4]{DH1}, which we briefly outline for the sake of completeness. 
For $\omega \in \Omega$, set 
\begin{equation}\label{chiom}
\chi_\om:=\sum_{n=0}^\infty L_{\sigma^{-n}\omega}^{(n)}\tilde F(\sigma^{-n}\omega, \cdot).
\end{equation}
where $L_{\omega}^{(n)}$ is given by
\begin{equation}\label{eq: expression iterate L}
    L_\om^{(n)} \varphi=\frac{\cL^{(n)}_\om (\varphi v_\om)}{v_{\sigma^n \om}}, \quad \varphi \in \cB_\om.
\end{equation}
We note that $L_\omega=L_\omega^{(1)}\colon \cB_\omega \to \cB_{\sigma \omega}$ is a  bounded linear operator for $\mathbb P$-a.e. $\omega \in \Omega$. This easily follows from the assumption that $\cL_\omega \colon \cB_\omega \to \cB_{\sigma \omega}$ is bounded together with~\cite[(V3), (V7), (V8)]{DGGV} and \eqref{essinf}.
The same type of reasoning together with~\eqref{newadd} (applied to $\theta=0$) gives that there exist $D, \lambda>0$ such that 
\begin{equation}\label{tu}
    \|L_\omega^{(n)}\varphi\|_{\cB_{\sigma^n \omega}}\le De^{-\lambda n}\|\varphi\|_{\cB_\omega},
\end{equation}
for $\mathbb P$-a.e. $\omega \in \Omega$, $n\in \mathbb N$ and $\varphi \in \cB_\omega$ with $\int_{X_\omega}\varphi\, d\mu_\omega=0$. Indeed, \eqref{newadd} for $\theta=0$ gives that 
\[
\|\mathcal L_\omega^{(n)}\varphi\|_{\cB_{\sigma^n \omega}}\le cr^n \|\varphi\|_{\cB_\omega}, 
\]
for $\mathbb P$-a.e. $\omega \in \Omega$, $n\in \mathbb N$ and $\varphi \in \cB_\omega$ with $\int_{X_\omega}\varphi \, dm_\omega=0$. Moreover, as we have a uniform upper bound for $\|v_\omega\|_{\cB_\omega}$ and $\|1/v_\omega \|_{\cB_\omega}$, which is independent of $\omega$ on a set of full probability (owing to~\eqref{essinf} and~\eqref{930}), we obtain~\eqref{tu}.

By~\eqref{tu}, $\chi_\om\in \cB_\om$ for $\mathbb P$-a.e. $\om \in \Om$. Moreover, there is $\bar D>0$ such that 
\begin{equation}\label{upperbound-2}
\|\chi_\om\|_{\cB_\om}\le \tilde D, \quad \text{for $\mathbb P$-a.e. $\omega \in \Omega$.}
\end{equation}
For $\om \in \Om$, let 
\begin{equation}\label{mdec}
\pi_\om:=\tilde F(\om, \cdot)+\chi_\om-\chi_{\sigma \om}\circ T_\om.
\end{equation}
Then, $L_\om(\pi_\om)=0$ for $\mathbb P$-a.e. $\om\in \Om$, and consequently 
\begin{equation}\label{rmd}
\mathbb E_\om[\pi_{\sigma^n \om}\circ T_\om^n\rvert (T_\om^{(n+1)})^{-1}(\mathcal G)]=0,
\end{equation}
for $\mathbb P$-a.e. $\om \in \Om$ and $n\in \mathbb N$, where the left-hand side in the above equality denotes the conditional expectation with respect to $\mu_\om$. We refer to~\cite[Lemmas 3 and 4]{DH1} for details.

\begin{lemma}\label{MartLem}
We have $\pi_\om=0$ for $\mathbb P$-a.e. $\om \in \Om$.
\end{lemma}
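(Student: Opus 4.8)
The plan is to show that the martingale part $\pi_\om$ is itself a coboundary, and then invoke the fact that a coboundary which is also a (reverse) martingale difference must vanish. Since $F = H\circ\tau - H$ and $\tilde F = F - c_\om$ where $c_\om = \int_{X_\om} F\, d\mu_\om$, and since $\Sigma^2 = 0$ by Lemma~\ref{varvanishes}, the Birkhoff sums $S_n\tilde F$ have $L^2(\mu_\om)$ norms growing sublinearly; in fact, by the martingale decomposition \eqref{mdec}, one has $S_n\tilde F(\om,\cdot) = \sum_{k=0}^{n-1}\pi_{\sigma^k\om}\circ T_\om^{(k)} + \chi_\om - \chi_{\sigma^n\om}\circ T_\om^{(n)}$, where the summands form a reverse martingale difference sequence by \eqref{rmd}. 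Using orthogonality of the martingale increments together with \eqref{upperbound-2}, one computes $\int_{X_\om} (S_n\tilde F)^2\, d\mu_\om = \sum_{k=0}^{n-1}\int \pi_{\sigma^k\om}^2\, d\mu_{\sigma^k\om} + O(1)$. Combined with \eqref{asymvar} and $\Sigma^2 = 0$, this forces $\frac1n\sum_{k=0}^{n-1}\int\pi_{\sigma^k\om}^2\, d\mu_{\sigma^k\om}\to 0$.

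From here I would use ergodicity. The map $\om \mapsto \int_{X_\om}\pi_\om^2\, d\mu_\om$ is a nonnegative $L^1(\bbP)$ function (bounded by \eqref{upperbound-2} and the boundedness of $\tilde F$), and by Birkhoff's ergodic theorem applied to $\sigma$ (which is ergodic), $\frac1n\sum_{k=0}^{n-1}\int\pi_{\sigma^k\om}^2\, d\mu_{\sigma^k\om}\to \int_\Om\int_{X_\om}\pi_\om^2\, d\mu_\om\, d\bbP(\om)$ for $\bbP$-a.e. $\om$. Hence this integral is zero, and therefore $\pi_\om = 0$ in $L^2(\mu_\om)$ for $\bbP$-a.e. $\om$; since $\mu_\om$ is equivalent to $m_\om$ (by \eqref{lowerbound}), $\pi_\om = 0$ as an element of $\cB_\om$ for $\bbP$-a.e. $\om$.

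The main obstacle I anticipate is the bookkeeping in the $L^2$-expansion of the Birkhoff sum: one needs the reverse-martingale structure \eqref{rmd} to guarantee the cross terms $\int \pi_{\sigma^j\om}\circ T_\om^{(j)}\cdot\pi_{\sigma^k\om}\circ T_\om^{(k)}\, d\mu_\om$ vanish for $j\neq k$, and one must control the boundary terms $\chi_\om - \chi_{\sigma^n\om}\circ T_\om^{(n)}$ and their interaction with the martingale sum uniformly in $n$. The uniform bounds \eqref{upperbound}, \eqref{lowerbound}, \eqref{upperbound-2}, \eqref{obs2} together with the invariance of $\mu$ under $\tau$ are exactly what make these error terms $O(1)$, so the estimate goes through; this is essentially the computation carried out in~\cite[Section 4]{DH1}, and I would simply cite it for the details. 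An alternative, perhaps cleaner route avoiding the explicit $L^2$-expansion: note that $\pi_\om = \tilde F(\om,\cdot) + \chi_\om - \chi_{\sigma\om}\circ T_\om = (H(\om,\cdot) - c_\om - \int H\, d\mu + \chi_\om) - (H(\sigma\om,\cdot) - \int H\, d\mu + \chi_{\sigma\om})\circ T_\om =: G_\om - G_{\sigma\om}\circ T_\om$, so $\pi$ is a $\tau$-coboundary with $L^2$ transfer function $G_\om = H(\om,\cdot) + \chi_\om - c_\om - \int_\cX H\, d\mu$; then $\sum_{k=0}^{n-1}\pi_{\sigma^k\om}\circ T_\om^{(k)} = G_\om - G_{\sigma^n\om}\circ T_\om^{(n)}$ is bounded in $L^2(\mu_\om)$, while by \eqref{rmd} it is a martingale with orthogonal increments, forcing $\sum_k\int\pi_{\sigma^k\om}^2\, d\mu_{\sigma^k\om}<\infty$ and hence, via the ergodic theorem as above, $\int_\Om\int_{X_\om}\pi_\om^2\, d\mu_\om\, d\bbP = 0$, i.e.\ $\pi_\om = 0$ $\bbP$-a.e. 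I would present this second argument, since it makes transparent why "a converging martingale must vanish'' in the random case.
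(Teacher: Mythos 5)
Your first argument is exactly the paper's proof: compare $\|S_n\tilde F-S_n\pi\|_{L^2(\mu_\om)}=O(1)$ via the telescoping $\chi$-terms, use $\Sigma^2=0$ and \eqref{asymvar} to get $\frac1n\|S_n\pi\|_{L^2(\mu_\om)}^2\to 0$, use the orthogonality coming from \eqref{rmd} to write $\|S_n\pi\|_{L^2(\mu_\om)}^2=\sum_{k<n}\|\pi_{\sigma^k\om}\|_{L^2(\mu_{\sigma^k\om})}^2$, and conclude by Birkhoff's theorem applied to the bounded function $\om\mapsto\|\pi_\om\|^2_{L^2(\mu_\om)}$. That route is complete and you should present it.

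The ``cleaner'' second route you say you would actually present has a genuine gap, on two counts. First, the algebra is wrong: from \eqref{mdec}, \eqref{FH} and \eqref{tildeF} one gets
$\pi_\om=\hat H(\sigma\om,\cdot)\circ T_\om-\hat H(\om,\cdot)-c_\om$ with $\hat H(\om,\cdot)=H(\om,\cdot)-\chi_\om$, so $\pi$ is a coboundary \emph{plus the fibre constants} $-c_\om$; your proposed $G_\om$ does not satisfy $\pi_\om=G_\om-G_{\sigma\om}\circ T_\om$ (the sign of the $H$-difference comes out reversed, and the $c_\om$'s do not telescope --- the partial sums $\sum_{k=0}^{n-1}\pi_{\sigma^k\om}\circ T_\om^{(k)}$ carry an extra $-\sum_{k=0}^{n-1}c_{\sigma^k\om}$, which is not bounded a priori; that $c_\om$ is itself a coboundary of $\hat H$ is only established \emph{after} this lemma). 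Second, and more fundamentally, any transfer function for $\pi$ must involve $H$, which at this stage is only a measurable function: there is no a priori bound on $\|H(\om,\cdot)\|_{L^2(\mu_\om)}$, so the claim that $G_\om-G_{\sigma^n\om}\circ T_\om^{(n)}$ is bounded in $L^2(\mu_\om)$ is unjustified. Indeed, obtaining integrability and regularity of $H$ is the conclusion of Theorem~\ref{MT}, so it cannot be an input here. The strength of the paper's (and your first) argument is precisely that it never touches $H$ directly: the only place $H$ enters is through the unimodular functionals $\ell_\om^t$ in Lemma~\ref{varvanishes}, which are well defined for an arbitrary measurable $H$. Drop the second argument and keep the first.
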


\begin{proof}
Let $S_n \pi$ be defined by~\eqref{birksums} by replacing $\tilde F$ with $\pi$ given by $\pi(\om, x):=\pi_\om(x)$, $(\om, x)\in \cX$. By~\eqref{upperbound-2}, \eqref{mdec} and~\cite[(V3)]{DGGV}, there exists $\bar D>0$ such that
\begin{equation}\label{948}
\|S_n \tilde F(\om, \cdot)-S_n \pi(\om, \cdot)\|_{L^2(\mu_\om)}\le \bar  D,\quad \text{for $\mathbb P$-a.e. $\om \in \Om$.}
\end{equation}
Due to the Lemma~\ref{varvanishes}, we have $\Sigma^2=0$. Hence, \eqref{asymvar} and~\eqref{948} give
\begin{equation}\label{957}
\lim_{n\to \infty}\frac 1 n \|S_n \pi(\om, \cdot)\|_{L^2(\mu_\om)}^2=0, \quad \text{for $\mathbb P$-a.e. $\om \in \Om$.}
\end{equation}
On the other hand, \eqref{rmd} implies
\[
\|S_n \pi(\om, \cdot)\|_{L^2(\mu_\om)}^2=\sum_{k=0}^{n-1}\|\pi_{\sigma^k \om}\|_{L^2(\mu_{\sigma^k \om})}^2,
\]
and consequently
\begin{equation}\label{958}
\lim_{n\to \infty}\frac 1 n \|S_n \pi(\om, \cdot)\|_{L^2(\mu_\om)}^2 = \int_\Omega \|\pi_\om\|_{L^2(\mu_\om)}^2\, d\mathbb P(\omega),
\end{equation}
for $\mathbb P$-a.e. $\om \in \Om$. The conclusion of the lemma now follows easily from~\eqref{957} and~\eqref{958}.
\end{proof}
From~\eqref{mdec} and the previous lemma, we conclude that  
\begin{equation}\label{143}
\tilde F(\om, \cdot)=\chi_{\sigma \om}\circ T_\om -\chi_\om, \quad \text{for $\mathbb P$-a.e. $\om \in \Om$.}
\end{equation}
Setting $\hat H(\om, \cdot):=H(\om, \cdot)-\chi_\om(\cdot)$, from~\eqref{FH},~\eqref{tildeF} and~\eqref{143} it follows that 
\[
c_\om=\int_{X_{\om}}F(\om, \cdot)\, d\mu_\om=\hat H(\sigma \om, T_\om(\cdot))-\hat H(\om, \cdot),
\]
for $\mathbb P$-a.e. $\om \in \Om$. 

For $\om \in \Om$ and $t\in \mathbb R$, let $\bar{\cL}_\om^{it}\colon \cB_\omega \to \cB_{\sigma \omega}$ be a bounded linear operator defined by 
\[
\bar{\cL}_\om^{it}\varphi:=\cL_\om(e^{itc_\om}\varphi)=e^{it c_\om}\cL_\om \varphi, \quad \varphi \in \cB_\om.
\]
Note that $\bar{\cL}_\om^{it}$ is defined as $\cL_\om^{it}$ by replacing $F_\om$ with $c_\om$.
Furthermore, we set 
\[
\bar{\cL}_\om^{it, (n)}:=\bar{\cL}_{\sigma^{n-1}\om}^{it}\circ \ldots \circ \bar{\cL}_{\sigma \om}^{it}\circ \bar{\cL}_\om^{it}, \quad \om \in \Om, \ n\in \mathbb N.
\]
Clearly,
\[
\bar{\cL}_\om^{it, (n)}=e^{it \sum_{j=0}^{n-1}c_{\sigma^j \om}}\cL_\om^{(n)}, \quad \om \in \Om, \ n\in \mathbb N.
\]
Then \eqref{930}, \eqref{newadd}, and~\cite[(V3)]{DGGV} imply that 
\[
\begin{split}
&\left \|\bar{\cL}_\om^{it, (n)}\varphi-e^{it \sum_{j=0}^{n-1}c_{\sigma^j \om }}\left(\int_{X_\om} \varphi\, dm_\om\right )v_{\sigma^n \om}\right \|_{\cB_{\sigma^n\om}} \\
&=\left \|e^{it \sum_{j=0}^{n-1}c_{\sigma^j \om }}\cL_\om^{(n)} \left (\varphi-\left (\int_{X_\om} \varphi\, dm_\om\right )v_\om\right)\right \|_{\cB_{\sigma^n\om}}\\
&=\left \|\cL_\om^{(n)} \left (\varphi-\left (\int_{X_\om} \varphi\, dm_\om\right )v_\om\right) \right  \|_{\cB_{\sigma^n\om}}\\
&\le cr^n\left \|\varphi-\left (\int_{X_\om} \varphi\, dm_\om\right )v_\om\right \|_{\cB_\om}\\
&\le \bar  c r^n\|\varphi\|_{\cB_\om},
\end{split}
\]
for $\mathbb P$-a.e. $\om \in \Om$, $n\in \mathbb N$, $t\in \mathbb R$ and $\varphi \in \cB_\omega$, where $\bar c>0$ is a constant independent of these variables.  Consequently, 
\begin{equation}\label{526}
\left \| (\bar{\cL}_\om^{it, (n)})^*\phi -e^{it \sum_{j=0}^{n-1}c_{\sigma^j \om}}\phi(v_{\sigma^n \om})m_\om\right \|_{\cB^*_{\om}}\le \bar c r^n\|\phi\|_{\cB_{\sigma^n\om}^*},
\end{equation}
for $\mathbb P$-a.e. $\om \in \Om$, $n\in \mathbb N$, $t\in \mathbb R$ and $\phi \in \cB_{\sigma^n\om}^*$, where again we view $m_\om$ as an element $\cB_\om^*$.

For $\om \in \Om$ and $t\in \mathbb R$, let $\bar \ell_\om^{t}\in \cB_\om^*$ be defined as $\ell_\om^t$ (see~\eqref{deff}), replacing $H(\om, \cdot)$ with $\hat H(\om, \cdot)$. The same arguments as in the proof of Lemma~\ref{funct} yield $\bar \ell_\om^t \neq 0$, $\|\bar \ell_\om^{it}\|_{\cB_\om^*}\le 1$
and
\[
(\bar{\cL}_\om^{it})^* \bar \ell_{\sigma \om}^t=\bar \ell_\om^t, \quad \text{for $\mathbb P$-a.e. $\om \in \Om$ and $t\in \mathbb R$.}
\]
Hence, applying~\eqref{526} to $\phi=\bar \ell_{\sigma^n \om}^t$ we have 
\[
\left \| \bar{\ell}_\om^t-e^{it \sum_{j=0}^{n-1}c_{\sigma^j \om}}\bar \ell_{\sigma^n \om}^t(v_{\sigma^n \om})m_\om \right \|_{\cB^*_\omega}\le \bar c r^n,
\]
for $\mathbb P$-a.e. $\om \in \Om$, $n\in \mathbb N$ and $t\in \mathbb R$. Therefore, 
\[
\bar{\ell}_\om^t=\lim_{n\to \infty}e^{it \sum_{j=0}^{n-1}c_{\sigma^j \om}}\bar \ell_{\sigma^n \om}^t(v_{\sigma^n \om})m_\om,
\]
for $\mathbb P$-a.e. $\om \in \Om$, $n\in \mathbb N$ and $t\in \mathbb R$. In particular, 
\begin{equation}\label{553}
\lim_{n\to \infty}e^{it \sum_{j=0}^{n-1}c_{\sigma^j \om}}\bar \ell_{\sigma^n \om}^t(v_{\sigma^n \om})=\bar{\ell}_\om^{it}(1)=\int_{X_\om} e^{-it \hat H(\om, \cdot)}\, dm_\om,
\end{equation}
and 
\begin{equation}\label{600}
\left (\lim_{n\to \infty}e^{it \sum_{j=0}^{n-1}c_{\sigma^j \om}}\bar \ell_{\sigma^n \om}^t(v_{\sigma^n \om}) \right)\int_{X_\om} e^{it \hat H(\om, \cdot)}\, dm_\om=\bar \ell_\om^t(e^{it \hat H(\om, \cdot)})=1,
\end{equation}
for $\mathbb P$-a.e. $\om \in \Om$ and $t\in \mathbb R$. From~\eqref{553} and~\eqref{600} we have 
\[
\int_{X_\om} e^{-it \hat H(\om, \cdot)}\, dm_\om=\frac{1}{\int_{X_\om} e^{it \hat H(\om, \cdot)}\, dm_\om},
\]
and therefore 
\[
\left |\int_{X_\om} e^{it \hat H(\om, \cdot)}\, dm_\om\right |=1, \quad \text{for $\mathbb P$-a.e. $\om \in \Om$ and $t\in \mathbb R$.}
\]
On the other hand, since 
\[
\left |\int_{X_\om} e^{it \hat H(\om, \cdot)}\, dm\right |^2=1-2\int_{X_\om\times X_\om}\sin^2\left (\frac{t \hat H(\om, x)-t\hat H(\om, y)}{2}\right)\, d (m_\om\times m_\om)(x, y),
\]
 we have that for $\mathbb P$-a.e. $\om \in \Omega$ and $t\in \mathbb R$ 
 \[
 t\hat H(\om, \cdot)=\alpha_\om^t+2\pi k_\om^t(\cdot),
 \]
 for some $\alpha_\om^t\in [0,2\pi)$ and $k_\om^t \colon X\to \mathbb Z$.  

 Take an arbitrary $t\in \mathbb R\setminus \{0\}$. We claim that $k_\om^t$ is a constant function for $\mathbb P$-a.e. $\om \in \Om$. Suppose that $k_\om^t(x)-k_\om^t(y)\neq 0$ for some $x, y\in X$, $x\neq y$. Let $s:=t\sqrt 2$. Then
 \[
 \hat H(\omega, x)-\hat H(\om, y)=\frac{2\pi}{t}(k_\om^t(x)-k_\om^s(y))=\frac{2\pi}{s}(k_\om^s(x)-k_\om^s(y)),
 \]
 yielding $\sqrt 2=s/t\in \mathbb Q$. Therefore, $k_\om^t$  is a constant function for $\mathbb P$-a.e. $\om \in \Om$. Consequently, $\hat H(\om, \cdot)$ is a constant for $\mathbb P$-a.e. $\om \in \Om$. Since $\chi_\om \in \cB_\omega$, we immediately get that 
 \[
 H(\om, \cdot)=\hat H(\om, \cdot)+\chi_\om \in \cB_\om, \quad \text{for $\mathbb P$-a.e. $\om \in \Om$.}
 \]
 As $\int_{X_\om} \chi_\om \, d\mu_\om=0$, from the above it yields that 
 \[
\hat  H(\om, \cdot)=\int_{X_\om} H(\om, \cdot)\, d\mu_\om,
 \]
 which together with \eqref{eq: expression iterate L} and~\eqref{chiom} gives~\eqref{sumH}. This completes the proof of Theorem~\ref{MT}.

\section{Sequential dynamics}\label{SDS}
In this section, we will present our first main result in the sequential setting.

\subsection{Setting}
Let $(X_j, \mathcal G_j, m_j)$, $j\in \mathbb N_0:=\mathbb N\cup \{0\}$ be a sequence of probability spaces endowed with notions of variations $\var_j \colon L^1(X_j, m_j)\to [0, +\infty]$ that satisfy the conditions $(V1)-(V7)$ of~\cite[p.5]{DolgHaf}. For $j\in \mathbb N_0$,  we define 
\[
\cB_j:=\{\varphi\in L^1(X_j, m_j): \ \var_j(\varphi)<+\infty\}.
\]
Then, each $\cB_j$ is a Banach space with respect to the norm 
\[
\|\varphi\|_{\cB_j}:=\|\varphi\|_{L^1(m_j)}+\var_j (\varphi).
\]
Let $T_j:X_j\to X_{j+1},$ $j\in \mathbb N_0$ be a sequence of measurable maps such that
\begin{equation}\label{V8}
    \sup_j\sup_{h:\, \var_{j+1}(h)\leq 1}\var_{j}(h\circ T_j)<\infty.
    \end{equation}
We also assume that the maps are absolutely continuous,  that is, $(T_j)_*m_j\ll m_{j+1}$ for each $j\in \mathbb N_0$. Let $\cL_j\colon L^1(X_j, m_j)\to L^1(X_{j+1}, m_{j+1})$ denote the transfer operator associated with $T_j$ with respect to the measures $m_j$ and $m_{j+1}$ characterized by the following duality relation:
\begin{equation}\label{sdual}
\int_{X_{j+1}} (\cL_j \varphi)\psi \, dm_{j+1}=\int_{X_j}\varphi \cdot (\psi \circ T_j)\, dm_j, 
\end{equation}
for $\varphi \in L^1(X_j, m_j)$ and $\psi \in L^\infty(X_{j+1}, m_{j+1})$. For $j\in \mathbb N_0$ and $n\in \mathbb N$, set 
\[
\cL_j^{(n)}:=\cL_{j+n-1}\circ \ldots \circ \cL_{j+1}\circ \cL_j,
\]
which is the transfer operator corresponding to 
\[
T_j^{(n)}:=T_{j+n-1}\circ \ldots \circ T_{j+1} \circ T_j.
\]
\begin{definition}\label{admissS}
We say that the sequence of transfer operators $(\cL_j)_{j\in \mathbb N_0}$  is \emph{admissible} if the following holds:
\begin{itemize}
\item there exists $K>0$ such that 
\[
\|\cL_j \varphi\|_{\cB_{j+1}}\le K\|\varphi\|_{\cB_j} \quad \text{for $j\in \mathbb N_0$ and $\varphi \in \cB_j$;}
\]
\item there are $N\in \mathbb N$, $\alpha^N \in (0, 1)$ and $\beta^N>0$ such that for every $j\in \mathbb N_0$ and $\varphi \in \cB_j$,
\[
\|\cL_j^{(N)}\varphi \|_{\cB_{j+N}}\le \alpha^N \|\varphi\|_{\cB_j}+\beta^N \|\varphi\|_{L^1(X_j, m_j)};
\]
\item for each $a>0$, there are $c=c(a)>0$ and $n_0=n_0(a)\in \mathbb N$ such that 
\[
\essinf \cL_j^{(n)}\varphi\ge c\|\varphi\|_{L^1(X_j, m_j)}, \quad \text{for $j\in \mathbb N_0$, $n\ge n_0$ and every $\varphi\in C_{j, a}$,}
\]
where
\[
C_{j, a}:=\left \{\varphi\in \cB_j:  \ \varphi\ge 0 \ \text{and} \ \var_j(\varphi)\le a\int_{X_j} \varphi\, dm_j \right \}.
\]
\end{itemize}
\end{definition}

\begin{remark}
We refer to~\cite[Section 4]{DolgHaf} for explicit examples of admissible sequences of transfer operators.

\end{remark}

\begin{remark}\label{seqim}
Let $(\cL_j)_{j\in \mathbb N_0}$ be any admissible sequence of transfer operators. By~\cite[Theorem 2.4]{DolgHaf} there exists a sequence $(v_j)_{j\in \mathbb N_0}$ of nonnegative maps $v_j \colon X_j \to \mathbb R$ with $v_j\in \cB_j$ such that the following holds:
\begin{itemize}
\item for $j\in \mathbb N_0$, $\int_{X_j}v_j\, dm_j=1$;
\item  $\sup_{j\in \mathbb N_0}\|v_j\|_{\cB_j}<+\infty$;
\item there is $c>0$ such that $\essinf v_j\ge c$ for $j\in \mathbb N_0$;
\item for $j\in \mathbb N_0$, $\cL_j v_j=v_{j+1}$;
\item there are $C, \lambda>0$ such that 
\begin{equation}\label{835eq}
\left \|\cL_j^{(n)}\varphi-\left (\int_X \varphi \, dm_j\right ) v_{j+n}\right \|_{\cB_{j+n}}\le Ce^{-\lambda n}\|\varphi\|_{\cB_j},
\end{equation}
for $j\in \bbN_0$, $n\in \bbN$ and $\varphi \in \cB_j$.
\end{itemize}
For $j\in \mathbb N_0$, let $\mu_j$ be the probability measure on $X_j$ given by $d\mu_j=v_j\, dm_j$. Then $T_j^*\mu_j=\mu_{j+1}$ for $j\in \mathbb N_0$.

 Measures $\mu_j$ can be viewed as sequential counterparts of the random measures $\mu_\omega$ discussed in Section \ref{sec: setting random}.
However, in contrast to the random case, the sequences of measures $\mu_j$, $j\in \mathbb N_0$ with the above properties are not unique. On the other hand, the following is true: Let $\tilde \mu_j$, $j\in \mathbb N_0$ be any sequence where $\tilde \mu_j$ is a probability measure on $X_j$ with $d\tilde \mu_j=\tilde v_j\, dm_j$ such that $T_j^*\tilde \mu_j=\tilde \mu_{j+1}$ for $j\in \mathbb N_0$, then
\[
\lim_{j\to \infty}\|v_j-\tilde v_j\|_{L^1(X_j, m_j)}=0.
\]

\end{remark}

\subsection{Perturbation theory}
Throughout this subsection, we fix an admissible sequence of transfer operators $(\cL_j)_{j\in \mathbb N_0}$. Let $F=(F_j)_{j\in \mathbb N_0}$ be a sequence of functions $F_j\colon X_j \to \mathbb R$ satisfying the following properties:
\begin{itemize}
\item \begin{equation}\label{sobs1} F_j\in \cB_j \quad \text{for $j\in \mathbb N_0$;} \end{equation}
\item \begin{equation}\label{sobs2}\sup_{j\in \mathbb N_0}\|F_j\|_{\cB_j}<+\infty.\end{equation}
\end{itemize}
For $\theta\in \mathbb C$ and $j\in \mathbb N_0$, let $\cL_j^\theta$ be a linear operator $\cL_j^\theta \colon \cB_j \to \cB_{j+1}$ defined by 
\begin{equation}\label{ttop}
\cL_j^\theta \varphi:=\cL_j(e^{\theta F_j}\varphi), \quad \varphi \in \cB_j.
\end{equation}
Then $\cL_j^\theta$ is a bounded linear operator (see~\cite[Lemma 7.2]{DolgHaf}) for each $j\in \mathbb N_0$ and $\theta \in \mathbb C$. 

The following result is obtained in~\cite[Theorem 7.3]{DolgHaf}.
\begin{theorem}\label{PTS}
There exists a neighborhood $U$ of $0$ in $\mathbb C$, and for each $\theta \in U$ there is a triplet $(\lambda_j^\theta, v_j^\theta, \phi_j^\theta)\in \mathbb C\times \cB_j \times \cB_j^*$, $j \in \mathbb N_0$ such that the following holds:
\begin{enumerate}
\item for $j\in \mathbb N_0$ and for all $\theta \in U$, 
\[
\cL_j^\theta v_j^\theta=\lambda_j^\theta v_{j+1}^\theta, \quad (\cL_j^\theta)^*\phi_{j+1}^\theta=\lambda_j^\theta \phi_j^\theta \quad \text{and} \quad \phi_j^\theta(v_j^\theta)=1;
\]
\item for $j\in \mathbb N_0$, $\lambda_j^0=1$, $v_j^0=v_j$, and $\phi_j^0=m_j$, where $m_j$ is identified with the functional in $\cB_j^*$ given by $\cB_j\ni\varphi \mapsto \int_{X_j} \varphi \, dm_j$, and $v_j$, $j\in \mathbb N_0$ are as in Remark~\ref{seqim};
\item for $j\in \mathbb N_0$, the maps $\theta \mapsto \lambda_j^\theta$, $\theta \mapsto v_j^\theta$, and $\theta \mapsto \phi_j^\theta$ are analytic on $U$;
\item there is $C>0$ such that for $j\in \mathbb N_0$ and  $\theta \in U$,
\begin{equation}\label{930s}
\max \{|\lambda_j^\theta|, \|v_j^\theta\|_{\cB_j}, \|\phi_j^\theta\|_{\cB_j^*}\}\le C;
\end{equation}
\item there are $c>0$ and $r\in (0, 1)$ such that for $j\in \mathbb N_0$ and  $\theta \in U$,
\begin{equation}\label{newadds}
\left \| \mathcal L_j^{\theta, (n)}\varphi-\left (\prod_{i=j}^{j+n-1}\lambda_i^\theta \right )\phi_j^{\theta}(\varphi)v_{j+n}^\theta \right \|_{\cB_{j+n}} \le cr^n\|\varphi\|_{\cB_j},
\end{equation}
for every $\varphi \in \cB_j$ and $n\in \mathbb N$, where 
\[
\cL_j^{\theta, (n)}:=\cL_{j+n-1}^{\theta} \circ \ldots \circ \cL_{j+1}^\theta \circ \cL_j^\theta. 
\]
\end{enumerate}

\end{theorem}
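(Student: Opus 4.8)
The statement is exactly~\cite[Theorem 7.3]{DolgHaf}, so in principle one only needs to cite it; since it is the sequential counterpart of Theorem~\ref{CMPr}, the plan is to follow the same strategy, which rests on the implicit function theorem rather than on any multiplicative ergodic theorem (none is available in the sequential world). Two uniformity facts will be used throughout: by~\eqref{sobs2} the map $\theta\mapsto\cL_j^\theta=\cL_j(e^{\theta F_j}\,\cdot\,)$ is analytic into $L(\cB_j,\cB_{j+1})$ with bounds independent of $j$, and~\eqref{835eq} supplies the uniform exponential decay that makes the relevant linearizations invertible.

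First I would produce the leading eigenvectors $v_j^\theta$. Let $\cS$ be the Banach space of sequences $\cV=(\cV_j)_{j\in\bbN_0}$ with $\cV_j\in\cB_j$, $\int_{X_j}\cV_j\,dm_j=0$ and $\|\cV\|_\infty:=\sup_j\|\cV_j\|_{\cB_j}<\infty$, and define $\mathbf F\colon\bbC\times\cS\to\cS$ by $\mathbf F(\theta,\cV)_0:=-\cV_0$ and
\[
\mathbf F(\theta,\cV)_{j+1}:=\frac{\cL_j^\theta(v_j+\cV_j)}{\int_{X_{j+1}}\cL_j^\theta(v_j+\cV_j)\,dm_{j+1}}-v_{j+1}-\cV_{j+1},\qquad j\in\bbN_0.
\]
Since $\cL_j v_j=v_{j+1}$ and $\int_{X_{j+1}}v_{j+1}\,dm_{j+1}=1$, the denominators stay uniformly close to $1$ near $(0,0)$, so $\mathbf F$ is well defined and analytic there and $\mathbf F(0,0)=0$. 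Because $\int_{X_j}\cV_j\,dm_j=0$ annihilates the derivative of the denominator, one finds $D_\cV\mathbf F(0,0)\cV=\bigl(-\cV_0,(\cL_j\cV_j-\cV_{j+1})_{j\ge0}\bigr)$, which is inverted explicitly by $\cV_j=-\sum_{i=0}^{j}\cL_i^{(j-i)}\cW_i$; this lies in $\cS$ because each $\cW_i$ has zero $m_i$-integral, so~\eqref{835eq} gives $\|\cL_i^{(j-i)}\cW_i\|_{\cB_j}\le Ce^{-\la(j-i)}\|\cW_i\|_{\cB_i}$ and the series over $i$ is uniformly summable. The implicit function theorem then yields an analytic $O\colon B_\bbC(0,\varepsilon)\to\cS$ with $O(0)=0$ and $\mathbf F(\theta,O(\theta))=0$; setting $v_j^\theta:=v_j+O(\theta)_j$ one gets $v_0^\theta=v_0$ (any other normalization of $v_0^\theta$ would do) and $\cL_j^\theta v_j^\theta=\la_j^\theta v_{j+1}^\theta$ with $\la_j^\theta:=\int_{X_{j+1}}\cL_j^\theta v_j^\theta\,dm_{j+1}$.

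Next I would repeat this on the dual side. In the Banach space $\cN_0$ of bounded sequences $(\Phi_j)$, $\Phi_j\in\cB_j^*$, $\Phi_j(v_j)=0$, I set
\[
G(\theta,\Phi)_j:=\frac{(\cL_j^\theta)^*(\Phi_{j+1}+m_{j+1})}{(\Phi_{j+1}+m_{j+1})(\cL_j^\theta v_j)}-\Phi_j-m_j,\qquad j\in\bbN_0;
\]
now the recursion runs backward in $j$, so no boundary condition at $j=0$ is needed. One has $G(0,0)=0$ since $(\cL_j)^*m_{j+1}=m_j$, and $D_\Phi G(0,0)\Psi=\bigl((\cL_j)^*\Psi_{j+1}-\Psi_j\bigr)_j$ is inverted by the telescoping sum $\Psi_j=-\sum_{i\ge0}(\cL_j^{(i)})^*\Xi_{j+i}$, convergent in $\cN_0$ because $\Xi_{j+i}(v_{j+i})=0$ together with~\eqref{835eq} give $\|(\cL_j^{(i)})^*\Xi_{j+i}\|_{\cB_j^*}\le Ce^{-\la i}\|\Xi\|_{\cN_0}$. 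The implicit function theorem produces $\bar\phi_j^\theta:=m_j+\mathcal O(\theta)_j$ with $(\cL_j^\theta)^*\bar\phi_{j+1}^\theta=\bar\la_j^\theta\bar\phi_j^\theta$, $\bar\la_j^\theta:=(\mathcal O(\theta)_{j+1}+m_{j+1})(\cL_j^\theta v_j)$; since $O(\theta)\in\cS$ and $\mathcal O(\theta)\in\cN_0$ one computes $\bar\phi_j^\theta(v_j^\theta)=1+\mathcal O(\theta)_j(O(\theta)_j)\to1$ uniformly in $j$ as $\theta\to0$, so $\phi_j^\theta:=\bar\phi_j^\theta/\bar\phi_j^\theta(v_j^\theta)$ is well defined near $0$, satisfies $\phi_j^\theta(v_j^\theta)=1$, and — using $\bar\phi_{j+1}^\theta(v_{j+1}^\theta)=(\bar\la_j^\theta/\la_j^\theta)\bar\phi_j^\theta(v_j^\theta)$ — also $(\cL_j^\theta)^*\phi_{j+1}^\theta=\la_j^\theta\phi_j^\theta$. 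This gives (1); (2) follows since $O(0)=\mathcal O(0)=0$; (3) is the analyticity carried through the implicit function theorem and the rescaling; and (4) follows from the sup-type norms on $\cS,\cN_0$ together with $\bar\phi_j^\theta(v_j^\theta)$ being bounded away from $0$.

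The hard part will be the uniform spectral-gap estimate~\eqref{newadds}. I would conjugate, setting $\tilde L_j^\theta\varphi:=(\la_j^\theta v_{j+1}^\theta)^{-1}\cL_j^\theta(\varphi\, v_j^\theta)$, so that $\tilde L_j^\theta 1=1$, the functional $\hat\phi_j^\theta(\varphi):=\phi_j^\theta(\varphi\, v_j^\theta)$ obeys $(\tilde L_j^\theta)^*\hat\phi_{j+1}^\theta=\hat\phi_j^\theta$ and $\hat\phi_j^\theta(1)=1$, and $\tilde L_j^{\theta,(n)}\varphi=\bigl(\prod_{i=j}^{j+n-1}\la_i^\theta\bigr)^{-1}(v_{j+n}^\theta)^{-1}\cL_j^{\theta,(n)}(\varphi\, v_j^\theta)$. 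One then establishes a Lasota--Yorke inequality for the family $(\tilde L_j^\theta)$ with constants uniform in $j$ and in $\theta$ small; since at $\theta=0$ the bound $\|\tilde L_j^{0,(n)}\varphi-\int_{X_j}\varphi\,d\mu_j\|_{\cB_{j+n}}\le C'e^{-\la n}\|\varphi\|_{\cB_j}$ is precisely~\eqref{835eq} divided by $v$, a perturbation/continuity argument (as in~\cite[Section 7]{DolgHaf}, or the analogous step~\cite[Lemma 4.4]{DGGV} in the random case) upgrades this to $\|\tilde L_j^{\theta,(n)}\varphi-\hat\phi_j^\theta(\varphi)\|_{\cB_{j+n}}\le cr^n\|\varphi\|_{\cB_j}$ uniformly in $j$; undoing the conjugation, and using that $\essinf v_j^\theta$ is bounded away from $0$ for $\theta$ small (so $1/v_j^\theta\in\cB_j$ with uniform norm), yields~\eqref{newadds}. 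Everything before this last step is soft implicit-function-theorem bookkeeping; obtaining the spectral gap with all constants uniform in $j$ is where the genuine analysis sits, and this is exactly what is carried out in~\cite[Theorem 7.3]{DolgHaf}.
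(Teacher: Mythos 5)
Your proposal is correct and matches the paper's treatment: the paper itself offers no proof of Theorem~\ref{PTS}, simply citing \cite[Theorem 7.3]{DolgHaf}, and your sketch is the straightforward sequential adaptation (with the appropriate boundary condition at $j=0$ and the backward recursion on the dual side) of the implicit-function-theorem argument the paper uses for its random counterpart, Theorem~\ref{CMPr}. The identification of the uniform spectral-gap estimate~\eqref{newadds} as the only genuinely analytic step, handled as in \cite[Lemma 4.4]{DGGV}, is also exactly how the paper proceeds in the random case.
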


\begin{remark}
Triplets $(\lambda_j^\theta, v_j^\theta, \phi_j^\theta)$ can be regarded as sequential counterparts to random RPF triplets introduced in Definition~\ref{RPF}.
\end{remark}

\subsection{Statement of the second main result}
The following is the second main result of our paper.
\begin{theorem}\label{MT2}
Let $(\cL_j)_{j\in \mathbb N_0}$ be an admissible sequence of
transfer operators. Furthermore, let $F=(F_j)_{j\in \mathbb N_0}$ be a sequence of maps $F_j\colon X_j\to \mathbb R$ satisfying~\eqref{sobs1} and~\eqref{sobs2} and with the property that for each $j\in \mathbb N_0$,
\begin{equation}\label{sobsdec}
F_j=H_{j+1}\circ T_j-H_j, 
\end{equation}
for some  measurable maps $H_j\colon X_j\to \mathbb R$.

Then 
\begin{equation}\label{secdec}
F_j=\int_{X_0}F_j\circ T_0^{(j)}\, dm_0+U_{j+1}\circ T_j-U_j+\mathcal M_j \quad \text{for $j\in \mathbb N_0$,}
\end{equation}
where\begin{enumerate}
\item $U_j, \mathcal M_j\in \cB_j$ for each $j\in \mathbb N_0$. Moreover, $\sup_{j\in \mathbb N_0}\|U_j\|_{\cB_j}<+\infty$, \\ $\int_{X_0} U_j \circ T_0^{(j)}\,dm_0=0$ and $\sup_{j\in \mathbb N_0}\|\mathcal M_j\|_{\cB_j}<+\infty$;
\item for $j\in \mathbb N_0$,  \[\mathbb E_{m_0}[\mathcal M_j\circ T_0^{(j)}\rvert (T_0^{(j+1)})^{-1}\mathcal G_{j+1}]=0. \] Moreover, $\sum_{j=0}^\infty \mathcal M_j\circ  T_0^{(j)}$ converges $m_0$-a.s. and in $L^p(m_0)$ for every finite $p\geq 1$.  
\item for every $n\in \bbN_0$ and a finite $p\geq1$ we have $\int_{X_0} |H_n \circ T_0^{(n)}|^p\,  dm_0<\infty$, $\sup_j\int_{X_0}|H_j\circ T_0^{(j)}-m_0(H_j\circ T_0^{(j)})|^pdm_0<\infty$
and
\begin{equation}\label{UHq}
H_n=\int_{X_0}H_n \circ T_0^{(n)}\,dm_0+U_n-\sum_{k=n}^\infty \mathcal M_k\circ T_n^{(k-n)}.
\end{equation}
In particular, with $q_n=\int_{X_0}H_n\circ T_0^{(n)}\,dm_0$ we have
\begin{equation}\label{LimExp}
\lim_{n\to\infty}\left((H_n-U_n)\circ T_0^{(n)}-q_n\right)=0,\,\,m_0-\text{a.e}
\end{equation}
and in $L^p(m_0)$ for all finite $p\geq 1$.
\end{enumerate}
\end{theorem}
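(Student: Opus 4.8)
The plan is to mimic the structure of the proof of Theorem~\ref{MT}, but to exploit the sequential perturbation theory of Theorem~\ref{PTS} and crucially observe that in the sequential setting the martingale part of the decomposition need not vanish. First I would construct the sequential martingale--coboundary decomposition of the fiberwise-centered function, exactly as in~\cite[Section 4]{DolgHaf,DH1}: set $\tilde F_j:=F_j-\int_{X_0}F_j\circ T_0^{(j)}\,dm_0$ (so that, writing $c_j$ for the centering constants, $\tilde F_j = F_j - c_j$), define the normalized operators $L_j\varphi=\cL_j(\varphi v_j)/v_{j+1}$, put $U_j:=\sum_{k=0}^{j}L_{j-k}^{(k)}\tilde F_{j-k}$ (the sequential analogue of~\eqref{chiom}; here the sum is finite), and set $\mathcal M_j:=\tilde F_j+U_j-U_{j+1}\circ T_j$. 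Using~\eqref{835eq} one gets $U_j\in\cB_j$ with $\sup_j\|U_j\|_{\cB_j}<\infty$, hence $\mathcal M_j\in\cB_j$ with a uniform bound; and $L_j(\mathcal M_j)=0$ translates (via the duality~\eqref{sdual}) into the reverse-martingale property $\mathbb E_{m_0}[\mathcal M_j\circ T_0^{(j)}\mid (T_0^{(j+1)})^{-1}\mathcal G_{j+1}]=0$. That establishes~\eqref{secdec} and part~(1) and the martingale identity in part~(2).

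Next I would identify $H_j$. Combining~\eqref{sobsdec}, the definition of $\tilde F_j$, and $\tilde F_j=U_{j+1}\circ T_j-U_j+\mathcal M_j$, the function $\hat H_j:=H_j-U_j$ satisfies $c_j=\hat H_{j+1}\circ T_j-\hat H_j$, i.e. $\hat H_j$ solves a scalar (space-independent) sequential cohomological equation with right-hand side the \emph{constants} $c_j$. This is the point where I would run the argument of Lemma~\ref{funct}--Lemma~\ref{MartLem} adapted to the sequence: introduce the functionals $\ell_j^t(\varphi)=\int_{X_j}e^{-it\hat H_j}\varphi\,dm_j$ and the scalar-twisted operators $\bar{\cL}_j^{it}\varphi=e^{itc_j}\cL_j\varphi$; verify $\ell_j^t\neq0$, $\|\ell_j^{it}\|_{\cB_j^*}\le1$, and $(\bar{\cL}_j^{it})^*\ell_{j+1}^t=\ell_j^t$; and then use the exponential decay~\eqref{835eq} (which for the scalar twist holds with the same rate, since $|e^{itc_j}|=1$) to deduce, exactly as in~\eqref{553}--\eqref{600}, that $|\int_{X_j}e^{it\hat H_j}\,dm_j|=1$ for all $t$, whence $\hat H_j$ is $m_j$-a.e. a constant plus a $\mathbb Z$-valued function, and the same $\sqrt2$-trick forces $\hat H_j$ to be constant $m_j$-a.e. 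Therefore $H_j=U_j+(\text{const})$, so $H_j\in\cB_j$ with a uniform bound on its variation — and since $\int_{X_0}U_j\circ T_0^{(j)}\,dm_0=0$, the constant is $q_j=\int_{X_0}H_j\circ T_0^{(j)}\,dm_0$. Note that here, unlike in Lemma~\ref{MartLem}, there is \emph{no} vanishing-martingale step: $\mathcal M_j$ genuinely survives, which is exactly why~\eqref{UHq} contains the tail sum.

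For~\eqref{UHq} itself I would unwind the telescoping: since $\tilde F_j=U_{j+1}\circ T_j-U_j+\mathcal M_j$ and also $\tilde F_j=\hat H_{j+1}\circ T_j-\hat H_j$ with $\hat H_j$ constant, iterating from level $n$ gives, for $m\ge n$, $U_n=\hat H_n-\hat H_{m+1}\circ T_n^{(m+1-n)}+\sum_{k=n}^{m}\mathcal M_k\circ T_n^{(k-n)}+(\text{explicit constants})$; letting $m\to\infty$ and using that $U_{m+1}$ is uniformly bounded in $\cB_{m+1}\hookrightarrow L^\infty$ while $\hat H_{m+1}$ is a constant, one recovers~\eqref{UHq} after matching the constants to $q_n$. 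Convergence of $\sum_{k\ge n}\mathcal M_k\circ T_n^{(k-n)}$ in $L^2(m_0)$ is immediate from the reverse-martingale orthogonality together with $\sum_k\|\mathcal M_k\|_{L^2}^2<\infty$ (a consequence of $\Sigma^2$-type estimates, or directly of the uniform $\cB_j$-bound via~\cite[(V3)]{DolgHaf} applied along the boundary terms, as in~\eqref{948}); upgrading to $L^p$ for all finite $p$ uses the $L^\infty$-bound on $\mathcal M_j$ (again from $\cB_j\hookrightarrow L^\infty$) and the Burkholder inequality, and $m_0$-a.s. convergence follows from the martingale convergence theorem. The remaining $L^p$ and a.s. statements about $H_n$ then follow from~\eqref{UHq} and the bounds on $U_n$ and the tail sum, and~\eqref{LimExp} is just the statement that the tail $\sum_{k\ge n}\mathcal M_k\circ T_n^{(k-n)}$ tends to $0$ in $L^p$ and a.s. as $n\to\infty$, which is the Cauchy tail of the convergent series. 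The main obstacle I anticipate is the careful bookkeeping of the (possibly unbounded) centering constants $q_n$ and making precise the passage to the limit $m\to\infty$ in the telescoped identity — that is, showing the boundary terms $\hat H_{m+1}\circ T_n^{(m+1-n)}$ do not obstruct convergence — together with verifying that the scalar-twist spectral estimates transfer verbatim from Theorem~\ref{PTS}; the rest is a faithful sequential translation of the arguments already carried out in Section~\ref{RDS}.
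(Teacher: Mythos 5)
Your overall architecture matches the paper's (sequential martingale--coboundary decomposition, characteristic-type functionals, the $\sqrt2$ trick), but there are two genuine problems. First, an algebraic error that breaks the core identification of $H_j$: from $F_j=H_{j+1}\circ T_j-H_j$ and $\tilde F_j=F_j-c_j=U_{j+1}\circ T_j-U_j+\mathcal M_j$ one gets
\begin{equation*}
c_j=\hat H_{j+1}\circ T_j-\hat H_j-\mathcal M_j,\qquad \hat H_j:=H_j-U_j,
\end{equation*}
\emph{not} $c_j=\hat H_{j+1}\circ T_j-\hat H_j$ as you claim. Consequently $\hat H_j$ does not solve a purely scalar cohomological equation, the twisted-functional argument does not show that $\hat H_j$ itself is constant, and your intermediate conclusion ``$H_j=U_j+(\text{const})$'' is false in general --- indeed it contradicts \eqref{UHq}, which you yourself note must contain a surviving martingale tail. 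The correct version (as in the paper) is that the duality relation picks up the factor $e^{-itS_{j,n}\mathcal M}$, so that $(\bar{\cL}_j^{it,(n)})^*\bar\ell_{j+n}^t(\varphi)=\bar\ell_j^t(e^{-itS_{j,n}\mathcal M}\varphi)\to\bar\ell_j^t(e^{-itB_j}\varphi)$ with $B_j=\lim_n S_{j,n}\mathcal M$, and it is $Q_j=-B_j-\hat H_j$ (not $\hat H_j$) whose characteristic function has modulus one; constancy of $Q_j$ then yields \eqref{UHq} directly, with no telescoping needed. Your subsequent telescoping derivation of \eqref{UHq} rests on the same false premise that $\hat H_{m+1}$ is constant.

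Second, you have not actually established the convergence of $\sum_j\mathcal M_j\circ T_0^{(j)}$, which is the genuinely non-trivial part of the theorem and the only place where the hypothesis \eqref{sobsdec} enters before the identification step. Reverse-martingale orthogonality gives $\bigl\|\sum_{k=0}^{n-1}\mathcal M_k\circ T_0^{(k)}\bigr\|_{L^2(m_0)}^2=\sum_{k=0}^{n-1}\|\mathcal M_k\circ T_0^{(k)}\|_{L^2(m_0)}^2$, so $L^2$-convergence is \emph{equivalent} to $\sum_k\|\mathcal M_k\circ T_0^{(k)}\|_{L^2(m_0)}^2<\infty$; this summability of squares does not follow from the uniform bound $\sup_k\|\mathcal M_k\|_{\cB_k}<\infty$, which only gives a uniform (not summable) bound on each term. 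What is needed is $\sup_n\|S_{0,n}\tilde F\|_{L^2(m_0)}<\infty$ (the paper's Lemma~\ref{variancebounded}), proved by contradiction using the functionals $\ell_j^t$ built from the \emph{actual} $H_j$ together with the invariance $(\cL_j^{it,(n)})^*\ell_{j+n}^t=\ell_j^t$, the asymptotics of $\prod_j\lambda_j^{it}$ from \cite{DolgHaf,DolgHaf0}, and $\ell_0^t\neq0$; only then does \cite[Theorem 6.5]{DolgHaf} deliver the decomposition \eqref{secdec} with a convergent martingale series. Your parenthetical appeal to ``$\Sigma^2$-type estimates'' gestures at this, but the step is not carried out, and your alternative justification via (V3) is incorrect.
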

 In \cite[Lemma 6.3]{DolgHaf} it is shown that $F_j$ always has a representation of the form \eqref{secdec} except that, in general, $\sum_{j=0}^\infty \mathcal M_j\circ T_0^j$ might not converge. In the proof of Theorem \ref{MT2} we will show that under \eqref{sobsdec} the latter series indeed converges. By \cite[Theorem 6.5]{DolgHaf}, this is equivalent to $\sup_n\text{Var}_{m_0}(S_{0,n}F)<\infty$ where $S_{0,n}F$ is defined in \eqref{eq: def Sj,nG} below and $\text{Var}_{m_0}(\cdot)$ is the variance with respect to the measure $m_0$. We thus get the following counterpart of Corollary \ref{Cor1}. 
\begin{corollary}\label{Cor2}
 $\sup_n\text{Var}_{m_0}(S_{0,n}F)<\infty$ if and only if 
 $$
F_j-\int_{X_0} F_j\circ T_0^j dm_0=H_{j+1}\circ T_j-H_j, 
 $$
 for $j\in \mathbb N_0$ and some measurable function $H_j$. In that case  $H_n=U_n-\sum_{k=n}^\infty \mathcal M_k\circ T_n^{(k-n)}$. The same conclusion holds without centering  $F_j$ (but with \eqref{UHq} instead of the latter formula for $H_n$).
\end{corollary}
 
\begin{remark}\label{Rem0}
Having in mind the (above) discussion proceeding Theorem \ref{MT2},  at first glance it might seem more natural to replace \eqref{sobsdec} by representations of the form
 \begin{equation}\label{Cob}
 F_j=a_j+M_j+H_{j+1}\circ T_j-H_j \quad j\in \bbN_0,  
 \end{equation}
 for some constants $a_j$ and measurable functions $H_j,M_j:X_j\to\bbR$ such that the series $\sum_{j=0}^\infty M_j\circ T_0^{(j)}$ converges almost surely. However, we can write $a_j=A_j-A_{j-1}$ with $A_j=\sum_{k=0}^{j}a_k$ and $M_j=V_{j+1}\circ T_j-V_j$ where $V_j=-\sum_{k=j}^\infty M_k\circ T_j^{(k-j)}$. Let $\bar H_j=H_j+V_j+A_{j-1}$. Then by \eqref{Cob},
 $$
F_j=\bar H_{j+1}\circ T_j-\bar H_j \quad j\in \bbN_0.
 $$
 Therefore, we can reduce \eqref{Cob} to \eqref{sobsdec}. Note that the same thing can be done in \eqref{secdec}, that is, one can absorb $\int_{X_0}F_j\circ T_0^{(j)}dm_0$ and $\cM_j$ inside the coboundary part; however, this ruins the regularity of the coboundary $U_n$ (that is, the new coboundary does not have a bounded variation).
\end{remark}
\begin{remark}\label{Rem}
We note that, in contrast to Theorem \ref{MT}, the coboundary part $H_j$, $j\in \mathbb N_0$ in~\eqref{sobsdec} are not necessarily functions in $\cB_j$. For example, assume that $\cB_j=\cB$ for $j\in \mathbb N_0$, and take a sequence of functions $(F_j)_{j\in \mathbb N_0}$ in $\cB$  such that the series $\sum_{k=0}^\infty F_k\circ T_0^{(k)}$ converges $m_0$-a.s. but not in $\cB$. Set 
\[
H_j=-\sum_{k=j}^\infty F_k\circ T_j^{(k-j)}, \quad j\in \mathbb N_0.
\] 
Then each $H_j$ is a measurable function which does not belong to $\cB$ and
$$
F_j=H_{j+1}\circ T_j-H_j, \quad j\in \mathbb N_0.
$$
In particular, \eqref{sobsdec} holds, which implies that Theorem~\ref{MT2} is applicable. Consequently, the maps $F_j$ can be written as in~\eqref{secdec}.
Since $\lim\limits_{n\to \infty}H_n\circ T_0^{(n)}=0$ $m_0$-a.s., from~\eqref{LimExp} we get that
\[
\lim_{n\to \infty}(U_n \circ T_0^{(n)}-q_n)=0 \quad \text{$m_0$-a.s.,}
\]
for some sequence of numbers $(q_n)_{n\in \bbN_0}\subset \bbR$.

In order to provide an explicit example, we take $X_j=[0, 1]$, $m_j=m$ where $m$ denotes the Lebesgue measure and $\var_j=\var$, where $\var (\varphi)=\int_0^1|\varphi'(x)|\, dx$.
Moreover, let $T_j=T$ for $j\in \mathbb N_0$, where $T\colon [0, 1] \to [0, 1]$ is the doubling map defined by $T(x)=2x (mod \ 1)$. Furthermore, let $f\colon [0, 1]\to \bbR$ be an arbitrary $C^1$-function such that  $\int_{0}^1 f\, dm=0$ and $\int_{0}^1 f'\, dm\not=0$. For $j\in \mathbb N_0$, let $F_j\colon [0, 1] \to \bbR$ be given by 
 \[
 F_j(x)=2^{-j}f(x), \quad x\in [0, 1].
 \]
 Clearly, $\sum_{k=0}^\infty F_k \circ T^{k}$ converges everywhere on $[0, 1]$. On the other hand, the series $\sum_{k=0}^\infty F_k \circ T^{k}$ does not converge in $\cB$ as 
\[
\var \left(\sum_{k=0}^{n-1} F_k \circ T^k\right)=\int_0^1 \left|\left(\sum_{k=0}^{n-1} F_k \circ T^{k}\right)'\right|\, dm=\int_0^1\left| \sum_{k=0}^{n-1}f'\circ T^k\right|\, dm,
\]
for every $n\in \mathbb N$, and in addition, 
\[
\lim_{n\to \infty}\frac 1 n \int_0^1\left|\sum_{k=0}^{n-1}f'\circ T^k\right|\, dm=\left|\int_0^1f'\, dm\right|\neq 0,
\]
due to Birkhoff's ergodic theorem (recall that $m$ is ergodic for $T$).

Finally, let us note that the functions $U_j$ in  \eqref{secdec} are not unique even under the restriction that $\sup_j\|U_j\|_{\cB_j}<\infty$. To illustrate this, let $X$, $T$, $m$ and $\var$ be as above. Set $T_j=T$ for $j\in \mathbb Z$. Then~\eqref{835eq} holds for every $j\in \mathbb Z$, $n\in \mathbb N$ with $v_n=1$ and $m_n=m$ for $n\in \mathbb Z$,  where $\cL_j^{(n)}=\cL^n$ and $\cL$ is the transfer operator associated with $T$. Take an arbitrary two-sided sequence $(F_j)_{j\in \mathbb Z}$ in $\cB$ such that $\int_XF_j \, dm=0$ and $\sup_{j\in \mathbb Z}\|F_j\|_{\cB}<+\infty$.  In this case,  the proof of \cite[Lemma 6.3]{DolgHaf} shows that~\eqref{secdec} holds with   $U_j=\sum_{k=1}^{j}\mathcal \cL^k F_{j-k}$, $j\in \mathbb N_0$. However, the same argument shows that one can take $\tilde U_j=\sum_{k=1}^{\infty}\mathcal L^{k}F_{j-k}$ instead of $U_j$ for $j\in \mathbb N_0$. Observe that $\sup_{j\in \mathbb N_0}\|U_j\|<+\infty$ and $\sup_{j\in \mathbb N_0}\|\tilde U_j\|<+\infty$. Moreover $\int_XU_j \, dm=\int_X \tilde U_j \, dm=0$. Furthermore, by \eqref{835eq}, 
\[
\begin{split}
|(\tilde U_n-U_n)\circ T^n| &=\left|\sum_{k=n+1}^{\infty}\mathcal L^{k}F_{n-k}\circ T_0^n\right|\leq\sum_{k=n+1}^\infty\left\|\mathcal L^{k}F_{n-k}\right\|_{L^\infty(m)}\\
&\leq C\sup_{j\in \mathbb Z}\|F_j\|_{\cB_j}\sum_{k=n+1}^\infty e^{-\lambda n}=O(e^{-\lambda n})\to 0,
\end{split}
\]
when $n\to \infty$, and thus~\eqref{LimExp} (with $H_n=\tilde U_n$) holds with $q_n=0$, $n\in \bbN_0$.
\end{remark}

\subsection{Proof of Theorem~\ref{MT2}}
We first introduce sequential counterparts of the functionals $\ell_\om^t$ used in the proof of Theorem~\ref{MT}. More precisely, for $t\in \mathbb R$ and $j\in \mathbb N_0$, we define $\ell_j^t\in \cB_j^*$ by 
\begin{equation}\label{functdef}
\ell_j^t(\varphi):=\int_{X_j}e^{-it H_j}\varphi \, dm_j, \quad \varphi \in \cB_j.
\end{equation}
Using the arguments as in the proof of Theorem~\ref{MT}, one can easily show that $\| \ell_j^t\|_{\cB_j^*}\le 1$ and $\ell_j^t\neq 0$ for every $t\in \mathbb R$ and $j\in \mathbb N_0$.

Before proceeding, we introduce some additional notation. For a sequence $G=(G_j)_{j\in \mathbb N_0}$ of maps $G_j\colon X_j\to \mathbb R$, we set
\begin{equation}\label{eq: def Sj,nG}
S_{j, n}G:=\sum_{k=j}^{j+n-1}G_k\circ T_j^{(k-j)}.
\end{equation}
We note that 
\begin{equation}\label{1158}
    \cL_j^{\theta, (n)}\varphi=\cL_j^{(n)}(e^{\theta S_{j, n}F}\varphi) \quad \text{for $\theta \in \mathbb C$, $j\in \mathbb N_0$, $n\in \mathbb N$ and $\varphi \in \cB_j$,}
\end{equation}
where $F=(F_j)_{j\in \mathbb N_0}$.
\begin{lemma}\label{901Lem}
For all $j\in \mathbb N_0$, $n\in\mathbb N$, $t\in\bbR$ and $\varphi  \in \cB_j$, we have
\[
(\mathcal L_j^{it, (n)})^*\ell_{j+n}^t(\varphi)=\ell_j^t(\varphi).
\]
\end{lemma}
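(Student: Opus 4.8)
The plan is to prove this by induction on $n$, using the duality relation \eqref{sdual} together with the cohomological identity \eqref{sobsdec} in exactly the same fashion as the second assertion of Lemma~\ref{funct} was established in the random case. The statement is the sequential analogue of that lemma, so the same mechanism should carry over verbatim with indices replacing the shift $\sigma$.

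First I would treat the base case $n=1$. Fix $j\in\mathbb N_0$, $t\in\mathbb R$ and $\varphi\in\cB_j$. Then by definition of the dual operator, of $\ell_{j+1}^t$, of $\cL_j^{it}$ in \eqref{ttop}, and the duality relation \eqref{sdual},
\[
(\cL_j^{it})^*\ell_{j+1}^t(\varphi)=\ell_{j+1}^t(\cL_j^{it}\varphi)=\int_{X_{j+1}}e^{-itH_{j+1}}\cL_j(e^{itF_j}\varphi)\,dm_{j+1}=\int_{X_j}e^{-itH_{j+1}\circ T_j}e^{itF_j}\varphi\,dm_j.
\]
Now substitute \eqref{sobsdec} in the form $F_j-H_{j+1}\circ T_j=-H_j$, so the integrand becomes $e^{-itH_j}\varphi$, giving $\int_{X_j}e^{-itH_j}\varphi\,dm_j=\ell_j^t(\varphi)$. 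This is the base case.

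For the inductive step, suppose the claim holds for $n$ and all indices; we prove it for $n+1$. Write $\cL_j^{it,(n+1)}=\cL_{j+n}^{it}\circ\cL_j^{it,(n)}$, hence $(\cL_j^{it,(n+1)})^*=(\cL_j^{it,(n)})^*\circ(\cL_{j+n}^{it})^*$. Applying this to $\ell_{j+n+1}^t$, the inner operator $(\cL_{j+n}^{it})^*\ell_{j+n+1}^t$ equals $\ell_{j+n}^t$ by the base case applied at index $j+n$; then $(\cL_j^{it,(n)})^*\ell_{j+n}^t=\ell_j^t$ by the inductive hypothesis. This completes the induction. (Alternatively, one can avoid induction and compute directly using \eqref{1158}: $(\cL_j^{it,(n)})^*\ell_{j+n}^t(\varphi)=\int_{X_{j+n}}e^{-itH_{j+n}}\cL_j^{(n)}(e^{itS_{j,n}F}\varphi)\,dm_{j+n}=\int_{X_j}e^{-itH_{j+n}\circ T_j^{(n)}}e^{itS_{j,n}F}\varphi\,dm_j$, and then observe that \eqref{sobsdec} telescopes: $S_{j,n}F=\sum_{k=j}^{j+n-1}(H_{k+1}\circ T_k-H_k)\circ T_j^{(k-j)}=H_{j+n}\circ T_j^{(n)}-H_j$, so that $-H_{j+n}\circ T_j^{(n)}+S_{j,n}F=-H_j$, and the integral collapses to $\ell_j^t(\varphi)$.)

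I do not expect any genuine obstacle here — the only points requiring a little care are the correct bookkeeping of indices in the composition of dual operators (the order reverses), the measurability and integrability needed to apply \eqref{sdual} (which is covered by $H_j$ being measurable, $\varphi\in\cB_j\subset L^1(m_j)$, and $e^{-itH_j}$ being bounded), and writing out the telescoping sum $S_{j,n}F=H_{j+n}\circ T_j^{(n)}-H_j$ cleanly if one takes the direct route. Since the direct route is arguably cleaner and makes the role of \eqref{sobsdec} transparent, I would present that, keeping the inductive argument in reserve as a remark.
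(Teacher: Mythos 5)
Your proposal is correct, and the direct computation you chose to present — applying \eqref{1158}, the duality relation \eqref{sdual}, and the telescoping of \eqref{sobsdec} to get $S_{j,n}F=H_{j+n}\circ T_j^{(n)}-H_j$ — is exactly the paper's proof. The inductive variant you keep in reserve is also valid but adds nothing beyond the one-line telescoping identity.
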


\begin{proof}
We have 
    \[\begin{split}
        \left(\cL_j^{it, (n)}\right)^\ast(\ell^t_{j+n}) (\varphi)&= \ell^t_{j+n}(\cL_j^{it, (n)}\varphi)\\
        &=\int_{X_{j+n}} e^{-itH_{j+n}}\cL_j^{it, (n)}(\varphi) \, dm_{j+n}\\
        &=\int_{X_{j+n}} e^{-itH_{j+n}}\cL_j^{(n)}(e^{i t S_{j,n}F}\varphi) \, dm_{j+n}\\
        &=\int_{X_j} e^{-itH_{j+n}\circ T_j^{(n)}} e^{i t S_{j,n}F}\varphi \, dm_j\\
        &=\int_{X_j} e^{-it H_j}\varphi \, dm_j \\
        &=\ell_j^t(\varphi)
    \end{split}\]
    where we used~\eqref{sdual}, \eqref{sobsdec} and~\eqref{1158}.
\end{proof}
Let $\tilde F=(\tilde F_j)_{j\in \mathbb N_0}$ be a sequence of functions given by 
\[
\tilde F_j=F_j-\int_{X_j}F_j \, d\tilde m_j \quad j \in \mathbb N_0,
\]
where $\tilde m_j=(T_0^{(j)})^*m_0$. Note that $\tilde m_0=m_0$ and that the sequence $\tilde F$ also satisfies~\eqref{sobs1} and~\eqref{sobs2}.

\begin{lemma}\label{variancebounded}
We have $\sup_{n\in \mathbb N}\|S_{0, n}\tilde F\|_{L^2(m_0)}<+\infty$.
\end{lemma}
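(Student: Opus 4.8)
The plan is to bound $\|S_{0,n}\tilde F\|_{L^2(m_0)}$ uniformly in $n$ by extracting the leading spectral behaviour of the twisted transfer operators at $\theta=0$. The key point is that the characteristic function $t \mapsto \int_{X_0} e^{it S_{0,n}\tilde F}\, dm_0$ can be read off from the iterates $\tilde{\cL}_0^{it,(n)}$: indeed, using~\eqref{sdual} and~\eqref{1158}, $\int_{X_0} e^{itS_{0,n}\tilde F}\, dm_0 = \int_{X_n}\tilde{\cL}_0^{it,(n)}(1)\, dm_n$, and by Theorem~\ref{PTS} applied to the sequence $\tilde F$ (which satisfies~\eqref{sobs1}--\eqref{sobs2}), this equals $\left(\prod_{i=0}^{n-1}\tilde\lambda_i^{it}\right)\tilde\phi_0^{it}(1)\int_{X_n}\tilde v_n^{it}\, dm_n$ up to an error that is $O(r^n)$ uniformly in $t$ in a fixed neighbourhood of $0$. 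Since a second moment bound is equivalent (via the standard $1-\cos$ estimate, or by differentiating under the integral sign twice) to a bound on the second derivative at $t=0$ of this characteristic function, the task reduces to controlling $\frac{d^2}{dt^2}\big|_{t=0}\prod_{i=0}^{n-1}\tilde\lambda_i^{it}$ uniformly in $n$, together with the (clearly bounded, by~\eqref{930s}) contributions of $\tilde\phi_0^{it}(1)$ and $\int_{X_n}\tilde v_n^{it}\, dm_n$ and their derivatives.

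Concretely, I would first record that, because $\tilde F_j$ is fiberwise centered with respect to $\mu_j$ (this is why $\tilde m_j$ must be adjusted — actually one should be slightly careful here: $\tilde F$ is centered against $\tilde m_j=(T_0^{(j)})^*m_0$, not against $\mu_j$, but the difference is handled by Remark~\ref{seqim}, which gives $\|v_j-\tilde v_j\|_{L^1}\to 0$), the first derivative $\frac{d}{dt}\big|_{t=0}\log\tilde\lambda_i^{it}$ is purely imaginary with controlled size, so that the product of the $\tilde\lambda_i^{it}$ behaves to second order like $\exp\left(it\sum_i a_i - \tfrac{t^2}{2}\sum_i(\sigma_i^2 + \text{cross terms})\right)$. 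The genuine content is that the variance-type quantity $\sum_{i=0}^{n-1}\frac{d^2}{dt^2}\big|_{t=0}\log\tilde\lambda_i^{it}$ (the sequential asymptotic-variance sum) stays bounded; this is exactly the sequential analogue of~\cite[Lemma 12]{DGGV1} and should be citable from~\cite{DolgHaf} — in fact~\cite[Theorem 6.5]{DolgHaf} identifies $\sup_n\var_{m_0}(S_{0,n}F)<\infty$ with the convergence of the martingale series, but here I want the direct perturbative estimate. If no clean citation is available, I would derive the bound from~\eqref{835eq}: the decay of $\cL_j^{(n)}$ on mean-zero functions in $\cB_j$ gives summable decay of the ``correlations'' $\int \tilde F_j\cdot(\tilde F_k\circ T_j^{(k-j)})\, d\mu_j$ in $|k-j|$, with the sum over the second index bounded uniformly in $j$, and a telescoping/Cauchy–Schwarz argument then yields $\|S_{0,n}\tilde F\|_{L^2}^2 \le C n$ trivially and in fact $\le C$ once one subtracts the boundary coboundary terms.

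Here is the cleanest route, which I would actually carry out: combine the pieces already assembled. From Lemma~\ref{901Lem} we have $(\cL_0^{it,(n)})^*\ell_n^t = \ell_0^t$ with $\|\ell_j^t\|_{\cB_j^*}\le 1$; pairing~\eqref{newadds} (for the cocycle $\tilde\cL^{it}$, i.e.~using $\tilde\lambda$, $\tilde v^{it}$, $\tilde\phi^{it}$) against $\ell_n^t$ gives, exactly as in the proof of Lemma~\ref{varvanishes},
\[
\left|\ell_0^t(\varphi) - \left(\prod_{k=0}^{n-1}\tilde\lambda_k^{it}\right)\tilde\phi_0^{it}(\varphi)\,\ell_n^t(\tilde v_n^{it})\right|\le c r^n\|\varphi\|_{\cB_0},
\]
for all $\varphi\in\cB_0$, $n\in\mathbb N$ and $t$ in a fixed neighbourhood of $0$. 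Taking $\varphi\equiv 1$ and letting $n\to\infty$ shows $\prod_{k=0}^{n-1}\tilde\lambda_k^{it}$ converges (to $\ell_0^t(1)/[\tilde\phi_0^{it}(1)\lim_n \ell_n^t(\tilde v_n^{it})]$, with the denominator bounded away from $0$ near $t=0$ by~\eqref{930s} and the non-vanishing of $\ell$), and moreover the convergence is \emph{uniform} in $n$ with the limit bounded on the neighbourhood. A bounded convergent product $\prod_k\tilde\lambda_k^{it}$ with analytic factors, uniformly bounded by~\eqref{930s}, has $\left|\prod_{k=0}^{n-1}\tilde\lambda_k^{it}\right|$ bounded below on a (possibly smaller) neighbourhood of $0$ uniformly in $n$; combined with the standard expansion $\int_{X_0} e^{it S_{0,n}\tilde F}\, dm_0 = \ell_0^{-t}(1) \cdot(\text{sequential shift})$ — more precisely the identity $\int_{X_0}e^{itS_{0,n}\tilde F}dm_0 = \int_{X_n}\tilde\cL_0^{it,(n)}(1)dm_n$ together with the displayed asymptotics — one gets that the characteristic function of $S_{0,n}\tilde F$ under $m_0$ is, for $t$ near $0$, bounded away from $0$ uniformly in $n$; equivalently $\|S_{0,n}\tilde F\|_{L^2(m_0)}$ is bounded uniformly in $n$ by a Taylor-expansion (``$1-\cos$'') argument. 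The main obstacle I anticipate is the bookkeeping around the two slightly different centerings ($\tilde m_j$ versus $\mu_j$) and making sure the lower bound on $\left|\prod_{k<n}\tilde\lambda_k^{it}\right|$ is genuinely uniform in $n$ rather than just for each fixed $n$; this is where one must invoke either~\cite[Theorem 6.5]{DolgHaf} or the explicit second-derivative estimate from~\cite[Lemma 4.6]{DGGV}/\cite{DolgHaf} for the sequential setup to conclude that the partial products do not decay.
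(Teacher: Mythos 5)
Your ``cleanest route'' reproduces the paper's key estimate: pairing \eqref{newadds} (for the centered cocycle) against $\ell_n^t$ and using Lemma~\ref{901Lem} together with $\|\ell_j^t\|_{\cB_j^*}\le 1$ is exactly how the paper obtains \eqref{125} and \eqref{514eq}. Where you diverge is in the final deduction, and that step has a genuine gap. You propose to show that $\bigl|\prod_{k=0}^{n-1}\tilde\lambda_k^{it}\bigr|$ stays bounded below, conclude that the characteristic function of $S_{0,n}\tilde F$ under $m_0$ is bounded away from $0$ uniformly in $n$ for $t$ near $0$, and then assert that this is ``equivalent'' to $\sup_n\|S_{0,n}\tilde F\|_{L^2(m_0)}<\infty$ by a $1-\cos$ argument. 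That equivalence is false: a uniform lower bound on $|\phi_n(t)|$ on a neighbourhood of $0$ gives tightness of the symmetrized variables but no moment control whatsoever (a standard Cauchy variable has characteristic function $e^{-|t|}$, bounded below near $0$, and infinite variance). The $1-\cos$ inequality controls truncated second moments only when $|\phi_n(t)|^2$ is close to $1$, not merely bounded away from $0$.

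The paper closes the argument in the contrapositive direction, and that is the only direction in which the implication actually holds here: \emph{assuming} $\sup_n\|S_{0,n}\tilde F\|_{L^2(m_0)}=\infty$, the results \cite[Theorem 6.5, Proposition 7.1]{DolgHaf} and \cite[Corollary 28]{DolgHaf0} give $\prod_{k=0}^{n-1}\lambda_k^{it}\to 0$ for all small $t\ne 0$; feeding this into \eqref{514eq} forces $\ell_0^t(\varphi)=0$ for \emph{every} $\varphi\in\cB_0$, i.e.\ $\ell_0^t=0$, contradicting the non-vanishing of $\ell_0^t$. Note also that the paper quantifies over all $\varphi$ rather than taking $\varphi\equiv 1$ as you do; this sidesteps having to argue that $\ell_0^t(1)\ne 0$ and that the product actually converges (neither is needed --- only non-decay versus decay matters). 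Your proposal contains all the right ingredients, including the correct citations, but to become a proof it must be reorganized into this contradiction form; the direct route from a lower bound on the characteristic function to a second-moment bound does not close.
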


\begin{proof}
For $j\in \mathbb N_0$ and $\theta \in \mathbb C$, let $\tilde{\cL}_j^\theta \colon \cB_j \to \cB_{j+1}$ be a linear operator defined by~\eqref{ttop},  replacing $F_j$ with $\tilde F_j$. We observe that if $(\lambda_j^\theta, v_j^\theta, \phi_j^\theta)$, $j\in \mathbb N_0$ (where $\theta \in U$ and $U$ is a neighborhood of $0$ in $\mathbb C$) is the triplet given by Theorem~\ref{PTS} that corresponds to the family $(\cL_j^\theta)_{j\in \mathbb N_0, \theta \in \mathbb C}$ then the triplet that corresponds to the family $(\tilde{\cL}_j^\theta)_{j\in \mathbb N_0, \theta \in \mathbb C}$ is given by $(\tilde{\lambda}_j^\theta, v_j^\theta, \phi_j^\theta)$, $j\in \mathbb N_0$, where 
\[
\tilde \lambda_j^\theta=e^{-\theta c_j}\lambda_j^\theta \quad \text{and} \quad c_j:=\int_{X_j}F_j\, d\tilde m_j.
\]
Note that $|\tilde \lambda_j^{it}|=|\lambda_j^{it}|$ for $j\in \mathbb N_0$ and $t\in \mathbb R$.
By~\eqref{newadds}, we find that 
\begin{equation}\label{125}\begin{split}
         &\left|(\mathcal L_0^{it, (n)})^*\ell_{n}^t(\varphi)-\left (\prod_{j=0}^{n-1}\lambda_j^{it}\right)\ell_{n}^t(v_{n}^{it})\phi_0^{it}(\varphi)\right|\\
        & =\left|\ell_{n}^t\left(\mathcal L_0^{it, (n)}\varphi-\left (\prod_{j=0}^{n-1}\lambda_j^{it}\right )\phi_0^{it}(\varphi)v_{n}^{it}\right)\right|\\
        &\leq\left\|\mathcal L_0^{it, (n)}\varphi-\left (\prod_{j=0}^{n-1}\lambda_j^{it}\right )\phi_0^{it}(\varphi)v_{n}^{it}\right\|_{\cB_n}\leq cr^n\|\varphi\|_{\cB_0},
    \end{split}\end{equation}
    for $n\in \mathbb N$, $t\in \mathbb R$ and $\varphi \in \cB_0$. Next, it follows from Lemma~\ref{901Lem}, \eqref{930s} and~\eqref{125} that 
\begin{equation}\label{514eq}
    |\ell_0^t (\varphi)|=|(\mathcal L_0^{it, (n)})^*\ell_{n}^t(\varphi)| \le cr^n \|\varphi\|_{\cB_0}+C^2 \left |\prod_{j=0}^{n-1}\lambda_j^{it}\right |,
\end{equation}
for $\varphi \in \cB_0$, $n\in \mathbb N$ and $t\in [-\delta_0, \delta_0]$ for some $\delta_0>0$. 

Assuming that the conclusion of the lemma is not valid, it follows from~ \cite[Theorem 6.5]{DolgHaf}, ~\cite[Proposition 7.1]{DolgHaf} and~\cite[Corollary 28]{DolgHaf0} that for $t\in \mathbb R\setminus \{0\}$ sufficiently close to $0$, 
\[
\lim_{n\to \infty}\prod_{j=0}^{n-1}\tilde \lambda_j^{it}=\lim_{n\to \infty}\prod_{j=0}^{n-1}\lambda_j^{it}=0.
\]
Fix such $t$. By~\eqref{514eq}, 
\[
\ell_0^t(\varphi)=0, \quad \text{for every $\varphi \in \cB_0$.}
\]
Consequently, $\ell_0^t=0$, which results in a contradiction.
\end{proof}
It follows from Lemma~\ref{variancebounded} and~\cite[Theorem 6.5]{DolgHaf} that for each $j\in \mathbb N_0$,
\begin{equation}\label{543eq}
\tilde F_j=U_{j+1}\circ T_j-U_j+\mathcal M_j,
\end{equation}
where $U_j$ and $\mathcal M_j$ satisfy properties (1) and (2) of the statement of Theorem~\ref{MT2}. Observe that~\eqref{secdec} follows readily from~\eqref{543eq}.
From~\eqref{sobsdec} and~\eqref{secdec} we have 
\begin{equation}\label{cj}
c_j=\int_{X_0}F_j\circ T_0^{(j)}\, dm_0=\hat H_{j+1}\circ T_j-\hat H_j-\mathcal M_j \quad \text{for $j\in \mathbb N_0$,}
\end{equation}
where $\hat H_j:=H_j-U_j$. For $j\in \mathbb N_0$, let  $B_j:=\lim\limits_{n\to \infty}S_{j, n}\mathcal M$, where $\mathcal M=(\mathcal M_j)_{j\in \mathbb N_0}$.  

For $j\in \mathbb N_0$ and $t\in \mathbb R$, let $\bar{\cL}_j^{it}\colon \cB_j \to \cB_{j+1}$ be defined as $\cL_j^{it}$ by replacing $F_j$ with $c_j$. That is, 
\begin{equation}\label{1158a}
\bar{\cL}_j^{it}\varphi=\cL_j(e^{it c_j}\varphi)=e^{it c_j}\cL_j \varphi, \quad \varphi \in \cB_j.    
\end{equation}
Note that 
\[
\bar{\cL}_j^{it, (n)}=e^{it \sum_{k=j}^{j+n-1}c_k}\cL_j^{(n)}.\]
 Similarly to~\eqref{526}, it follows from~\eqref{835eq} that 
\begin{equation}\label{858eq}
\left\|(\bar{\cL}_{j}^{it, (n)})^*\phi-e^{it \sum_{k=j}^{j+n-1}c_k}\phi(v_{j+n})m_j\right \|_{\cB_j^*}\le  Ce^{-\lambda n}\|\phi\|_{\cB_{j+n}^*},
\end{equation}
for $j\in \mathbb N_0$, $n\in \mathbb N$, $t\in \bbR$ and $\phi \in \cB_{j+n}^*$.

Furthermore, let $\bar \ell_j^t\in \cB_j^*$ be defined as $\ell_j^t$ (see~\eqref{functdef}) by replacing $H_j$ with $\hat H_j$. 
\begin{lemma}\label{M}
For $j\in \mathbb N_0$, $t\in \mathbb R$ and $\varphi \in \cB_j$ we have
\[
(\bar{\cL}_{j}^{it, (n)})^* \bar  \ell_{j+n}^t(\varphi)=\bar \ell_j^t(e^{-it S_{j, n}\mathcal M}\varphi)\to \bar \ell_j^t(e^{-itB_j}\varphi) \quad \text{when $n\to \infty$.}
\]
\end{lemma}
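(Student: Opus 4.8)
The plan is to establish the stated equality by pure bookkeeping --- combining the definition~\eqref{1158a} of $\bar{\cL}_j^{it,(n)}$, the duality relation~\eqref{sdual}, and a telescoped version of~\eqref{cj} --- and then to obtain the limit from Lebesgue's dominated convergence theorem. First I would record the telescoping identity obtained by iterating~\eqref{cj} along the orbit: using $T_{j+m}\circ T_j^{(m)}=T_j^{(m+1)}$ and the fact that each $c_k$ is a constant, a straightforward induction on $n$ gives
\[
\hat H_{j+n}\circ T_j^{(n)}=\hat H_j+\sum_{k=j}^{j+n-1}c_k+S_{j,n}\mathcal M,\qquad n\in\bbN,
\]
and hence $e^{-it\,\hat H_{j+n}\circ T_j^{(n)}}=e^{-it\hat H_j}\,e^{-it\sum_{k=j}^{j+n-1}c_k}\,e^{-itS_{j,n}\mathcal M}$.

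For the equality itself I would start from $(\bar{\cL}_j^{it,(n)})^*\bar\ell_{j+n}^t(\varphi)=\bar\ell_{j+n}^t(\bar{\cL}_j^{it,(n)}\varphi)$, insert $\bar{\cL}_j^{it,(n)}\varphi=e^{it\sum_{k=j}^{j+n-1}c_k}\cL_j^{(n)}\varphi$, unwind the definition of $\bar\ell_{j+n}^t$ (that is,~\eqref{functdef} with $H_{j+n}$ replaced by $\hat H_{j+n}$), and apply the iterated duality relation~\eqref{sdual} with the test function $\psi=e^{-it\hat H_{j+n}}$, which is measurable of modulus one and therefore lies in $L^\infty(X_{j+n},m_{j+n})$ even though $\hat H_{j+n}$ itself need not be bounded. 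This rewrites the quantity as $e^{it\sum_{k=j}^{j+n-1}c_k}\int_{X_j}e^{-it\,\hat H_{j+n}\circ T_j^{(n)}}\varphi\,dm_j$; substituting the telescoping identity, the prefactors $e^{\pm it\sum c_k}$ cancel and what remains is $\int_{X_j}e^{-it\hat H_j}\big(e^{-itS_{j,n}\mathcal M}\varphi\big)\,dm_j=\bar\ell_j^t(e^{-itS_{j,n}\mathcal M}\varphi)$, where $\bar\ell_j^t$ is read as the bounded functional on $L^1(m_j)$ defined by the same integral formula (this is legitimate since $e^{-itS_{j,n}\mathcal M}\varphi\in L^1(m_j)$).

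For the passage to the limit, observe that $|e^{-itS_{j,n}\mathcal M}|\equiv 1$, so the integrand $e^{-it\hat H_j}e^{-itS_{j,n}\mathcal M}\varphi$ is dominated in modulus by $|\varphi|\in L^1(m_j)$, uniformly in $n$. Since $S_{j,n}\mathcal M\to B_j$ $m_j$-almost everywhere, we get $e^{-itS_{j,n}\mathcal M}\varphi\to e^{-itB_j}\varphi$ $m_j$-a.e., and dominated convergence yields $\bar\ell_j^t(e^{-itS_{j,n}\mathcal M}\varphi)\to\int_{X_j}e^{-it\hat H_j}e^{-itB_j}\varphi\,dm_j=\bar\ell_j^t(e^{-itB_j}\varphi)$. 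The one point that is not purely formal, and which I expect to be the main obstacle, is the $m_j$-a.e.\ convergence of $S_{j,n}\mathcal M$ for \emph{every} $j$ (not just the Birkhoff-type sum over the base): this is where the orthogonality of the reverse-martingale increments $\mathcal M_{j+\ell}\circ T_j^{(\ell)}$ and the $L^2$-summability $\sum_k\|\mathcal M_k\|^2<\infty$ --- furnished by Lemma~\ref{variancebounded} together with~\eqref{543eq} via \cite[Theorem 6.5]{DolgHaf} --- enter, along with the equivalence of $m_j$ with the conformal measure $\mu_j$ (from $\essinf v_j\ge c$), which is what makes the a.e.\ statement transfer between the two reference measures. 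Everything else is substitution of definitions and a single application of Lebesgue's theorem.
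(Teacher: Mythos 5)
Your argument is correct and coincides with the paper's own proof: the same chain of identities (dual operator, the relation $\bar{\cL}_j^{it,(n)}=e^{it\sum_{k=j}^{j+n-1}c_k}\cL_j^{(n)}$, the duality \eqref{sdual}, and the telescoped form of \eqref{cj} giving $\hat H_{j+n}\circ T_j^{(n)}=\hat H_j+\sum_{k=j}^{j+n-1}c_k+S_{j,n}\mathcal M$), followed by dominated convergence. Your closing remark correctly isolates the only non-formal ingredient --- the $m_j$-a.e.\ convergence of $S_{j,n}\mathcal M$ to $B_j$, which the paper takes for granted from the convergence statement in part (2) of Theorem~\ref{MT2} --- so the proposal is, if anything, slightly more careful than the paper's one-line justification of the limit.
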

\begin{proof}
We have 
    \[\begin{split}
        \left( \bar{\cL}_j^{it, (n)}\right)^\ast(\bar\ell^t_{j+n}) (\varphi)&= \bar \ell^t_{j+n}(\bar{\cL}_j^{it, (n)}\varphi)\\
        &=\int_{X_{j+n}} e^{-it\hat H_{j+n}}\bar{\cL}_j^{it, (n)}(\varphi) \, dm_{j+n}\\
        &=\int_{X_{j+n}} e^{-it\hat H_{j+n}}e^{i t\sum_{k=j}^{j+n-1}c_k}\cL_j^{(n)}(\varphi) \, dm_{j+n}\\
        &=\int_{X_j} e^{-it \hat H_{j+n}\circ T_j^{(n)}} e^{i t\sum_{k=j}^{j+n-1}c_k}\varphi \, dm_j\\
        &=\int_{X_j} e^{-it \hat H_j-itS_{j,n}\mathcal M}\varphi \, dm_j \\
        &=\bar\ell_j^t(e^{-itS_{j,n}\mathcal M}\varphi ),
    \end{split}\]
    where we used~\eqref{sdual}, \eqref{cj} and~\eqref{1158a}.    The convergence of $\bar \ell_j^t(e^{itS_{j,n}\mathcal M}\varphi) $ towards $\bar\ell_j^t(e^{-itB_j}\varphi)$ follows by the dominated convergence theorem.
\end{proof}
Applying~\eqref{858eq} for $\phi=\bar \ell_{j+n}^t$ and using Lemma \ref{M}, we get  
\[
\bar \ell_j^t(e^{-itB_j}\varphi)=b_j^t\int_{X_j}\varphi \, dm_j \quad \text{for $j\in \mathbb N_0$, $t\in \mathbb R$ and $\varphi \in \cB_j$,}
\]
where 
\[
b_j^t:=\lim_{n\to\infty}e^{it\sum_{k=j}^{j+n-1}c_{k}}\bar\ell_{j+n}^t(v_{j+n}).
\]
Let $Q_j:=-B_j-\hat H_j$ for $j\in \mathbb N_0$. Observe that \[\bar\ell_j^t(e^{-itB_j}\varphi)=\int_{X_j}e^{itQ_j}\varphi\, dm_j, \quad \text{for $j\in \mathbb N_0$ and $\varphi \in \cB_j$.}\] Hence,
 \begin{equation}\label{1.1}
 b_j^t=b_j^t\int_{X_j} 1\, dm_j=\int_{X_j} e^{itQ_j}\,dm_j    
 \end{equation}
 and 
\begin{equation}\label{2.1}
b_j^t\int_{X_j} e^{-itQ_j} dm_j=\bar\ell_j^t(e^{it\hat H_j})=1.
\end{equation}
 From~\eqref{1.1} and~\eqref{2.1} we have 
\[
\int_{X_j} e^{itQ_j}\, dm_j=\frac{1}{\int_{X_j} e^{-itQ_j}\, dm_j},
\]
and therefore 
$$
\left|\int_{X_j} e^{itQ_j} dm_j\right|^2=\int_{X_j}e^{itQ_j}\,dm_j\int_{X_j}e^{-itQ_j}\,dm_j=1,
$$
for $j\in \mathbb N_0$. 
Using the same arguments as in the proof Theorem~\ref{MT}, we find that each $Q_j$ is a constant, that is,  for every $j\in \mathbb N_0$, there is $a_j\in \mathbb R$ such that $Q_j=a_j$ ($m_j$-a.e.). We conclude that 
\begin{equation}\label{ABq}
-B_j-\hat H_j=a_j, \quad j\in \mathbb N_0.
\end{equation}
In particular, $H_j\in L^p(X_j, (T_0^{(j)})_*m_0)$ for all finite $p\geq 1$. 
Finally, note that 
\[
\int_{X_0}B_j\circ T_0^{(j)}\,dm_0=\int_{X_0}U_j\circ T_0^{(j)}dm_0=0,
\]
yielding $a_j=-\int_{X_0}H_j\circ T_0^{(j)}\,dm_0$ and that for all $1\leq p<\infty$,
and $\sup_j\|H_j\circ T_0^{(j)}-m_0(H_j\circ T_0^{(j)})\|_{L^p(m_0)}<\infty$.
This completes the proof of Theorem~\ref{MT2}.

\section{Random and sequential two-sided subshfits of finite type}\label{SFT sec}
In this section, we will prove Liv\v sic regularity-type results for random and sequential two-sided subshifts of finite type. These results will be used in the next section to treat small perturbations of hyperbolic maps, but they also have applications, for instance, to inhomogeneous elliptic Markov shifts and for Markov shifts in a random dynamical environment (see \cite[Section 2.3]{LLT}). In what follows, given a compact metric space $(M,\rho)$ and $\beta>0$, we denote by $\|\cdot\|_\beta$ the $\beta$-H\"older norm in the space of $\beta$-H\"older maps from $M$ to $\bbR$.

\subsection{Random SFT}
Let $(\Omega, \mathcal F, \mathbb P)$ be an arbitrary probability space, and let $\sigma \colon \Omega \to \Omega$ be an invertible ergodic measure-preserving transformation on $(\Omega, \mathcal F, \mathbb P)$. Let us take a random variable $d_\om$ that takes values in $\bbN$ such that $\esssup d_\om<\infty$. For $\bbP$-a.e. $\om \in \Omega$ let $A^\om(\cdot,\cdot)$ be a matrix with $0-1$ entries of size $d_\om\times d_{\sig\om}$ that is measurable in $\om$. Set $\cA_\om=\{1,2, \ldots ,d_\om\}$. We assume that there is a constant $M\in\bbN$ such that for $\bbP$ a.e. $\om \in \Om$ the matrix $A^{\sigma^{-M}\om}\cdots A^{\sigma^{-2}\om}\cdot A^{\sigma^{-1}\om}$ has only positive entries. We define 
\[
X_\om=\{(x_k)_{k\geq 0}: \ x_k\in \cA_{\sigma^k \om} \ \text{and} \ A^{\sigma^k\om}(x_{k},x_{k+1})=1 \ \text{for $k\ge 0$}\},
\]
and let $T_\om:X_\om\to X_{\sigma\om}$ be the left shift. Then $X_\om$ can be viewed as a random compact subset of the one-point compactification $X$ of $\bbN^\bbN$.  Let $\cX=\{(\om,x): \om\in\Omega, x\in X_\om\}\subset\Omega\times\bbN^\bbN$. We consider the metric $\rho_\om$ on $X_\om$ given by 
\[
\rho_\om(x,y)=2^{-\inf\{k\geq 0: x_{k}\not=y_{k}\}},\quad x=(x_{k})_{k\ge 0}, y=(y_{k})_{k\ge 0}.
\]

Fix some H\"{o}lder exponent  $\beta>0$ and let $\phi_\om \colon X_\om\to \bbR$ be such that $(\om,x)\to\phi_\om(x)$ is measurable, $x\mapsto \phi_\om(x)$ is $\beta$- H\"older and $\esssup_{\om \in \Om}\|\phi_\om\|_{\beta}<\infty$. Let $m_\om$ be the random Gibbs measure generated by $\phi_\om$ and let $L_\om$ be the corresponding random transfer operator (see \cite{Kifer}). Then $(T_\om)_*m_\om=m_{\sigma\om}$ and all the conditions in Section \ref{RDS} hold with the H\"older norm corresponding to any exponent $0<\alpha\leq \beta$, that is, with $\var_\om(\varphi)$ denoting the H\"older constant of $\varphi$ associated with the exponent $\alpha$.

Let us define the random two-sided shift by 
$$
\tilde X_\om=\{(x_{k})_{k\in \bbZ}: \ x_k\in \cA_{\sigma^k \om} \ \text{and} \  A^{\sigma^k\om}(x_{k},x_{k+1})=1 \ \text{for $k\in \bbZ$}\}
$$
which can be viewed as a random compact subset of the one point compactification of $\bbN^\bbZ$. 
Let $S_\om:\tilde X_\om\to\tilde X_{\sigma\om}$ be the left shift. Then the measure $m_\om$ extends naturally to a probability measure (which will also be denoted by $m_\om$) on $\tilde X_\om$ such that $(S_\om)_*m_\om=m_{\sig\om}$. Let $\pi_\om:\tilde X_\om\to X_\om$ be given by 
$$
\pi_\om((x_{k})_{k\in\bbZ})=(x_k)_{k\geq 0}.
$$
Denote 
$$
S_\om^{(n)}=S_{\sig^{n-1}\om}\circ\ldots\circ S_{\sigma\om}\circ S_\om,
$$
and
$$
\cS(\om,x)=(\sigma\om, S_\om x).
$$
Set $\tilde\cX=\{(\om,x): x\in\tilde X_\om\}\subset\Omega\times\bbN^\bbZ$.
Let the metric $\tilde\rho_\om$ on $\tilde X_\om$ be given by 
$$
\tilde\rho_\om(x,y)=2^{-\inf\{|k|: x_{k}\not=y_{k}\}},\quad x=(x_{k})_{k\in \bbZ}, y=(y_{k})_{k\in \bbZ}.
$$

Let $f\colon \tilde \cX\to\bbR$ be a measurable function such that $f(\om,\cdot)\colon\tilde X_\om\to \bbR$ is H\"older continuous with exponent $\beta$ and $\esssup_{\om \in \Om}\|f(\om,\cdot)\|_{\beta}<+\infty$.
Then, arguing as in \cite[Lemma B.2]{DolgHaf}, there are measurable functions $F\colon\cX\to\bbR$ and $u\colon \tilde\cX\to\bbR$ such that 
 $\esssup_{\om \in \Om}\|F(\om,\cdot)\|_{\beta/2}<\infty$,  $\esssup_{\om \in \Om}\|u(\om,\cdot)\|_{\beta/2}<\infty$ and 
 $$
f(\om,x)=F(\om,\pi_\om(x))+u(\cS(\om,x))-u(\om,x).
$$

\begin{theorem}\label{SFT1}
Let $f$ be as above.
Suppose that for some measurable function $H:\tilde\cX\to\bbR$ we have 
 $$
f=H\circ\cS-H.
 $$
 Then  $\|H(\om, \cdot)\|_{\beta/2}<\infty$ for $\mathbb P$-a.e. $\om \in \Om$. Moreover, $\esssup_{\omega \in \Omega}\var_\om(H(\omega, \cdot))<+\infty$, where $\var_\om$ denotes the H\"older constant corresponding to the exponent $\beta/2$. In fact,
\begin{equation*}
H(\om,\cdot)=u(\om,\cdot)+\int_{\tilde X_\om}(H(\om, \cdot )-u(\om,\cdot))\,dm_\om+\sum_{n=0}^\infty L_{\sigma^{-n}\omega}^{(n)}(\tilde F(\sigma^{-n}\omega, \cdot)),\,\,\mathbb P-\text{a.e.,}
\end{equation*}
where 
\begin{equation*}
\tilde F(\omega, \cdot)=F(\omega, \cdot)-\int_{X_\om} F(\om, \cdot)dm_\om
\end{equation*}
\end{theorem}

\begin{proof}
Let $Q\colon\tilde\cX\to\bbR$ be given by $Q:=H-u$. Then
\begin{equation}\label{1143eq}
F_\om\circ\pi_\om=Q_{\sigma \om}\circ S_\om-Q_\omega
\end{equation}
where $F_\om( \cdot)=F(\om, \cdot)$ and $Q_\om(\cdot)=Q(\om, \cdot)$. We claim that $m_\om$-a.e.,  $Q_\om$ depends only on the coordinates $x_{k}$ for $k\geq 0$, that is, it has the form $Q_\om=q_\om\circ\pi_\om, \mu_\om-$a.e. for some measurable function $q_\om:X_\om\to\bbR$. Once this is proven, all the results stated in the theorem follow from Theorem~\ref{MT}.

To prove the claim, first note that by iterating~\eqref{1143eq} we get that for all $n\in \bbN$,
\begin{equation}\label{Q rep}
Q_\om=Q_{\sigma^n\om}\circ S_\om^{(n)}-\sum_{k=0}^{n-1}F_{\sigma^k\om}\circ\pi_{\sigma^k\om}\circ S_\om^{(k)}.    
\end{equation}
Next, for a point $x^{\om}=(x_{k})_{k\in\bbZ}\in \tilde X_\om$, we write $x^\om_{+}=\pi_\om (x^\om)=(x_{k})_{k\geq 0}$. Then the sum on the right-hand side above depends only on $x^\om_{+}$. 
Let $\varepsilon>0$. Applying Lusin's theorem with the compact space $\tilde X_\om$, we find that there is a closed set $E_\om\subset\tilde X_\om$ such that $m_\om(E_\om)\geq 1-\varepsilon$ and the restriction of $Q_\om$ to $E_\om$ is continuous. Since $E_\om$ is closed and $\tilde X_\om$ is compact, the set $E_\om$ is compact, and so the latter restriction is uniformly continuous.  Let $r_\om>0$ be such that if $x^\om,y^\om\in E_\om$ and $\tilde\rho_\om(x^\om,y^\om)\leq r_\om$ then $|Q_\om(x^\om)-Q_\om(y^\om)|<\varepsilon$. 
For $x^\om\in \tilde X_\om$, let us take $z^\om=z^\om(x_{+}^\om)=(z^\om_{k})_{k<0}$ which depends only on $x^\om_0$ (the zeroth coordinate of $x^\om$) such that the point $\textbf{x}^\om=(z^\om,x^\om_{+})$ belongs to $\tilde X_\om$ (namely $\textbf{x}^\om=(\textbf{x}^{\om}_{k})_{k\in \bbZ}$ with $\textbf{x}^\om_k=x_{k}^\om$ for $k\geq0$ and $\textbf{x}^\om_{k}=z_{k}^\om$ for $k<0$). Let us define $A_\om:\tilde X_\om\to[0,\infty)$ by
$$
A_\om(x^\om):=\inf_{n\geq 1}\left|Q_\om(x^\om)-\left(Q_{\sigma^n\om}\circ S_\om^{(n)}(\textbf{x}^\om)-\sum_{k=0}^{n-1}F_{\sigma^k\om}\circ\pi_{\sigma^k\om}\circ S_\om^{(k)}(x^\om)\right)\right|.
$$
Choose $r_0>0$ sufficiently small so that $\mathbb P(\{\om \in \Om: r_\om\geq r_0\})>0$. Then for $\mathbb P$-a.e. $\om \in \Om$,  there are infinitely many $n \in \bbN$ such that $r_{\sigma^n\om}\geq r_0$. Let us take a typical $\om \in \Om$ and $n\in \bbN$ sufficiently large so that $2^{-n}<r_0$ and $r_{\sigma^n \om}\geq r_0$. Take $x^\om\in (S_\om^{(n)})^{-1}(E_{\sigma^n\om})$. Then since the distance between $S_\om^{(n)} x^\om$ and $S_\om^{(n)} \textbf{x}^\om$ does not exceed $2^{-n}$ (which is smaller than $r_0$) we have
$$
|Q_{\sigma^n\om}(S_\om^{(n)} x^\om)-Q_{\sigma^n\om}(S_\om^{(n)}\textbf{x}^\om)|<\varepsilon,
$$
and therefore 
\[
\begin{split}
A_\om(x^\om) &=\inf_{m\ge 1}|Q_{\sigma^m\om}\circ S_\om^{(m)}(x^\om)-Q_{\sigma^m \om}\circ S_\om^{(m)}(\textbf x^\om)| \\
&\le |Q_{\sigma^n\om}\circ S_\om^{(n)}(x^\om)-Q_{\sigma^n \om}\circ S_\om^{(n)}(\textbf x^\om)|<\varepsilon,
\end{split}
\]
where the first equality uses \eqref{Q rep}. Finally, notice that $m_\om((S_\om^{(n)})^{-1}(E_{\sigma^n\om}))=m_{\sigma^n\om}(E_{\sigma^n\om})\geq 1-\varepsilon$.
We thus conclude
$$
m_\om (\{x^\om: A_\om(x^\om)\geq\varepsilon\})\leq\varepsilon.
$$
 Taking $\varepsilon\to0$ we see that $A_\om(\cdot)=0, m_\om$-a.e.
That completes the proof of the claim and the theorem.
\end{proof}

\begin{remark}
A standard proof of Lusin's theorem is based on approximation by simple functions followed by the application of Egorov's theorem. A standard proof of the latter shows that the set $E_\om$ is formed in an elementary way from sets of the form $E_{\om,n,m}=\{x\in\tilde X_\om: |Q_\om(x)-Q_{\om,n}(x)|<1/m\}, m\in\bbN$ where $Q_{\om,n}$ is a sequence of simple functions that converge to $Q_\om$. Thus, the set $E_\om$ is a random compact measurable subset of $\tilde X_\om$. Consequently, we can choose $r_\om$ so that the map $\om\to r_\om$ is measurable. 
\end{remark}
\subsection{Sequential SFT}

Let us first recall the definition of a sequential one-sided shift.
Let $\mathcal A_j=\{ 1, 2, \ldots ,d_j\}$, $j\in\bbZ$ with $\sup_j d_j<\infty$. Let $A^{(j)}$, $j\in \bbZ$ be matrices of sizes $d_j\times d_{j+1}$ with 0-1 entries. We suppose that there exists $M\in\bbN$ such that for every $j\in \bbZ$ the matrix $A^{(j)}\cdot A^{(j+1)}\cdots A^{(j+M)}$ has positive entries. 
Define 
\begin{equation}
\label{DefOneSided}
X_j=\left\{(x_{j, k})_{k=0}^\infty: \,x_{j, k}\in \cA_{j+k} \ \text{and} \ A^{(j+k)}_{x_{j, k}, x_{j, k+1}}=1 \ \text{for $k\ge 0$}\right\}.
\end{equation}
Let $T_j\colon X_j\to X_{j+1}$ be the left shift.  
Consider a metric $\mathsf{d}_j$ on $X_j$ given by 
$$
\mathsf{d}_j(x,y)=2^{-\inf\{k:\, x_{j,k}\not= y_{j,k}\}}, \quad x=(x_{j, k})_{k=0}^\infty, \ y=(y_{j, k})_{k=0}^\infty. 
$$
Then all the conditions in Section~\ref{SDS} hold (see \cite[Section 4]{DolgHaf}) where $(m_j)_{j\in \mathbb Z}$ is an arbitrary sequential Gibbs measure formed by a sequence of H\"older continuous functions with exponent $\beta>0$.  Moreover, for each $j\in \bbN_0$,  $v_j(\varphi)$ is the H\"{o}lder constant of a H\"{o}lder continuous function $\varphi \colon X_j \to \bbC$ with H\"{o}lder exponent $\alpha$, where $0<\alpha\leq \beta$ is some fixed number.

Define the two-sided shift space by
 \begin{equation}
\label{DefTwoSided}
\tilde X_j=\left\{(x_{j, k})_{k=-\infty}^\infty: \,x_{j, k}\in \cA_{j+k} \ \text{and} \ A^{(j+k)}_{x_{j, k}, x_{j, k+1}}=1\ \text{for $k\in \bbZ$} \right \}.
\end{equation}
Let $S_j:\tilde X_j\to \tilde X_{j+1}$ be the left shift, and let 
$
S_j^{(n)}=S_{j+n-1}\circ\ldots\circ S_{j+1}\circ S_j.
$
Consider the metric $\mathsf{\tilde d}_j$ on $\tilde X_j$ given by 
\[
\mathsf{\tilde d}_j(x,y)=2^{-\inf\{|k|:\, x_{j,k}\not= y_{j,k}\}}, \quad x=(x_{j, k})_{k=-\infty}^\infty, \ y=(y_{j, k})_{k=-\infty}^\infty.
\]
Let $\pi_j:\tilde X_j\to X_j$ be
the natural projection given by 
\[
\pi_j((x_{j, k})_{k=-\infty}^\infty)=(x_{j, k})_{k=0}^\infty.
\]
Then the Gibbs measure $m_j$ can be lifted to $\tilde X_j$ (see the proof of \cite[Proposition B.7]{DolgHaf}).
By~\cite[Lemma B.2]{DolgHaf}, given a sequence of functions $f_j\colon \tilde X_j\to\bbR$, $j\in \bbN_0$  with $\sup_j\|f_j\|_\al<\infty$, there are sequences of functions $F_j:X_j\to\bbR$ and $u_j:\tilde X_j\to\bbR$ such that 
 $ \sup_j\|F_j\|_{\al/2}<\infty$,  $\sup_j\|u_j\|_{\al/2}<\infty$ and 
 \begin{equation}\label{CobRed}
 f_j=F_j\circ\pi_j+u_{j+1}\circ S_j-u_j.    
 \end{equation}

\begin{theorem}\label{SFT Thm2}
Let $f_j\colon\tilde X_j\to\bbR$, $j\in \bbN_0$ be functions such that $ \sup_j\|f_j\|_\al<\infty$
and suppose that for some measurable functions $H_j$, $j\in \bbN_0$ we have 
\[
f_j=H_{j+1}\circ S_j-H_j, \quad j\in \bbN_0.
\]
Suppose that either $H_{n_k}\in L^p(m_{n_k})$ and $\sup_k\|H_{n_k}\|_{L^p(m_{n_k})}<\infty$ for some subsequence $(n_k)_k$ of $\bbN$ and $p>1$
or that $H_j$
satisfy the following regularity condition: there are measurable functions $G_{j,n}$ such that for all $j$, 
\begin{equation}\label{H reg}
 \liminf_{n\to\infty}|H_{j+n}\circ S_j^{(n)}(x)-G_{j,n}(x_{j,0},x_{j,1}, \ldots)|=0, \text{ for }m_j\text{-a.e. }x\in \tilde X_j.   
\end{equation}
Then $H_j\in L^t(m_j)$ for all $j\in \bbN_0$ and $1\leq t<\infty$ and $m_j$-a.s. we have
\begin{equation}\label{H rep}
 H_j=u_j+\int_{X_0}(H_j-u_j)\circ T_0^{(j)}\,dm_0+U_j-\sum_{k=j}^\infty\mathcal M_k\circ T_j^{(k-j)}   
\end{equation}
where $U_n$ and $\mathcal M_n$ satisfy all the properties described in Theorem \ref{MT2} applied to functions $F_j$. Moreover, $\sup_j\left\|H_j-\int_{X_j}H_j\,dm_j\right\|_{L^t}<\infty$.
\end{theorem}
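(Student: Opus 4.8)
The plan is to follow the strategy of the proof of Theorem~\ref{SFT1}: first transfer the problem from the two‑sided shift to the one‑sided shift, and then invoke Theorem~\ref{MT2}. Throughout I would take $\var_j$ to be the $(\alpha/2)$-Hölder seminorm, which still produces an admissible sequence of transfer operators, so that the functions $F_j$ from \eqref{CobRed} lie in $\cB_j$ with $\sup_j\|F_j\|_{\cB_j}<\infty$. Setting $Q_j:=H_j-u_j$ on $\tilde X_j$ and subtracting \eqref{CobRed} from $f_j=H_{j+1}\circ S_j-H_j$ gives $F_j\circ\pi_j=Q_{j+1}\circ S_j-Q_j$ for every $j\in\bbN_0$, and iterating,
\[
Q_j=Q_{j+n}\circ S_j^{(n)}-(S_{j,n}F)\circ\pi_j,\qquad n\in\bbN,
\]
where $(S_{j,n}F)\circ\pi_j$ is measurable with respect to the coordinates $x_{j,0},x_{j,1},\dots$ Hence it suffices to prove that $Q_j$ is $m_j$-a.e.\ equal to a function of $x_{j,0},x_{j,1},\dots$; once this is known, writing $Q_j=q_j\circ\pi_j$ yields $F_j=q_{j+1}\circ T_j-q_j$ $m_j$-a.e.\ on $X_j$.

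For the claim I would split $Q_{j+n}\circ S_j^{(n)}=H_{j+n}\circ S_j^{(n)}-u_{j+n}\circ S_j^{(n)}$. Since $S_j^{(n)}x$ and $S_j^{(n)}x'$ agree on all coordinates of index $\ge -n$ whenever $\pi_j(x)=\pi_j(x')$, and the $u_{j+n}$ are $(\alpha/2)$-Hölder with uniformly bounded seminorm, the term $u_{j+n}\circ S_j^{(n)}$ lies within $C2^{-n\alpha/2}$ in $L^\infty(m_j)$ of a function of $x_{j,0},x_{j,1},\dots$ For the $H$-term I would run the Lusin‑theorem argument from the proof of Theorem~\ref{SFT1}: for each $\varepsilon>0$, using Lusin's theorem on the compact spaces $\tilde X_{j+n}$, choose closed sets of measure $\ge 1-\varepsilon$ on which the relevant $Q_{j+n}$ are uniformly continuous, and then compare the $S_j^{(n)}$-orbit of a typical point with that of a companion point having the same non‑negative coordinates and a canonically chosen past. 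The role played by ergodicity (Poincar\'e recurrence) in Theorem~\ref{SFT1}, namely the existence of infinitely many times at which the companion point lands in a good set, would here be supplied by the hypotheses of the theorem: under the first one, the uniform $L^p$-bound ($p>1$) on $H_{n_k}$ gives uniform integrability of $\{H_{n_k}\circ S_j^{(n_k-j)}\}_k$; under the second one, \eqref{H reg} provides, along a (point‑dependent) subsequence, an approximation of $H_{j+n}\circ S_j^{(n)}$ by a function of $x_{j,0},x_{j,1},\dots$, whose contribution cancels in the companion comparison. Combining these estimates gives that $Q_j$ is $m_j$-a.e.\ of the form $q_j\circ\pi_j$.

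With the claim in hand, the sequence $(F_j)$ satisfies \eqref{sobs1} and~\eqref{sobs2} and admits the coboundary representation $F_j=q_{j+1}\circ T_j-q_j$ required by Theorem~\ref{MT2}; applying that theorem (to $(F_j)$, with transfer maps $q_j$) produces the decomposition \eqref{secdec} with $U_j,\mathcal M_j\in\cB_j$ satisfying its conclusions, the integrability $q_j\in L^t(m_j)$ for all finite $t$ together with $\sup_j\|q_j-\int_{X_j}q_j\,dm_j\|_{L^t}<\infty$, and the formula $q_j=\int_{X_0}q_j\circ T_0^{(j)}\,dm_0+U_j-\sum_{k\ge j}\mathcal M_k\circ T_j^{(k-j)}$. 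Since $H_j=u_j+q_j\circ\pi_j$ $m_j$-a.e.\ and $\int_{X_0}q_j\circ T_0^{(j)}\,dm_0$ equals the constant $\int_{X_0}(H_j-u_j)\circ T_0^{(j)}\,dm_0$ appearing in \eqref{H rep}, substituting this formula for $q_j$ and lifting $U_j,\mathcal M_k$ through $\pi_j$ gives \eqref{H rep}. Finally, each $u_j$ is uniformly bounded (as $\cB_j$ embeds into $L^\infty$ with a uniform constant), whence $H_j\in L^t(m_j)$ for all finite $t$ and
\[
\sup_j\Bigl\|H_j-\int_{X_j}H_j\,dm_j\Bigr\|_{L^t}\le \sup_j\Bigl\|q_j-\int_{X_j}q_j\,dm_j\Bigr\|_{L^t}+2\sup_j\|u_j\|_{L^\infty}<\infty.
\]

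The hard part will be the passage to the one‑sided shift in the middle step: reproducing, without ergodicity, the reduction that is immediate in Theorem~\ref{SFT1}. The companion‑point comparison must be driven entirely by the a priori control on $H_j$ coming from hypothesis (a) or (b), and the delicate point is that the good times furnished by these hypotheses depend on the point, so Lusin/Egorov-type arguments together with uniform integrability are needed to make the comparison effective on a set of full measure; all the remaining steps are bookkeeping built on Theorem~\ref{MT2} and the decomposition \eqref{CobRed}.
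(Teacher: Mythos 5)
Your overall architecture matches the paper's: set $Q_j:=H_j-u_j$, use \eqref{CobRed} to get $F_j\circ\pi_j=Q_{j+1}\circ S_j-Q_j$, show that $Q_j$ is $m_j$-a.e.\ a function of the non-negative coordinates, and then feed the resulting one-sided coboundary into Theorem~\ref{MT2}; your treatment of hypothesis \eqref{H reg} (combining it with the uniform H\"older continuity of the $u_{j+n}$ so that every term in $Q_j=Q_{j+n}\circ S_j^{(n)}-(S_{j,n}F)\circ\pi_j$ is, along a subsequence and up to $o(1)$, a function of $x_{j,0},x_{j,1},\dots$) is essentially the paper's argument, as is all the bookkeeping after the reduction.

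The gap is in your handling of the first hypothesis (the uniform $L^p$ bound along $(n_k)$). You propose to rerun the Lusin/companion-point argument of Theorem~\ref{SFT1} with ``uniform integrability of $\{H_{n_k}\circ S_j^{(n_k-j)}\}_k$'' standing in for ergodicity. This does not work: uniform integrability is a tail condition and gives no control on the modulus of continuity of $Q_{n_k}$ on its Lusin sets uniformly in $k$ — the continuity radii $r_{n_k}$ may degenerate to $0$ as $k\to\infty$, and there is no recurrence mechanism in the sequential setting forcing infinitely many good times for a fixed point, let alone simultaneously for the point and its companion $\mathbf{x}$ (whose forward orbit is not distributed according to $m_{n_k}$, since $x\mapsto\mathbf{x}$ does not preserve measure). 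In Theorem~\ref{SFT1} these two issues are resolved by Poincar\'e recurrence over the ergodic base, which is exactly what is missing here. The paper's proof takes a genuinely different route in this case: writing $Q_j=\bar Q_{j+n}\circ S_j^{(n)}+R_{j,n}$ with $\bar Q_{j+n}$ centered and $R_{j,n}$ a function of the non-negative coordinates, it uses the exponential decay of correlations for the sequential Gibbs measures (tested against H\"older functions, then extended to $L^q$ by density and the uniform $L^p$ bound) to show that $\bar Q_{n_k}\circ S_j^{(n_k-j)}\to 0$ \emph{weakly} in $L^p(m_j)$, and then invokes the Banach--Saks theorem to upgrade weak convergence of $R_{j,n_k-j}$ to $m_j$-a.e.\ convergence of Ces\`aro averages along a further subsequence. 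You need some argument of this type (or another way to convert the purely $L^p$ information into a.e.\ information about limits of $\pi_j$-measurable functions); the companion-point comparison cannot be driven by uniform integrability alone.
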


\begin{proof}
 We have 
 $$
f_j=F_j\circ\pi_j+u_{j+1}\circ S_j-u_j=H_{j+1}\circ S_j-H_j.
 $$
 Thus, 
 $$
F_j\circ\pi_j=Q_{j+1}\circ S_j-Q_j
 $$
 where $Q_j:=H_j-u_j$. We claim next that $Q_j$ has the form $Q_j=R_j\circ\pi_j$ for some function $R_j$, namely $Q_j$ depends only on the coordinates with the indexes $k, k\geq0$. Indeed,
we have 
 $$
Q_j=Q_{j+n}\circ S_j^{(n)}-\sum_{k=j}^{j+n-1}F_k\circ\pi_k\circ S_j^{(k-j)}.
 $$
 Now, suppose that $\sup_{k}\|H_{n_k}\|_{L^p(m_{n_k})}<\infty$ for some subsequence $(n_k)_k$ of $\bbN$. Then, since $\sup_s \|u_s\|_{\alpha/2}<\infty$, we have $\sup_{k}\|Q_{n_k}\|_{L^p(m_{n_k})}<\infty$. Let $q$ denote the conjugate exponent of $p$, and take $n=n_k-j$ for $k$ large enough.
Set $\bar Q_{j+n}:=Q_{j+n}-m_{j+n}(Q_{j+n})$. Then
 $$
Q_j=\bar Q_{j+n}\circ S_j^{(n)}+m_{j+n}(Q_{j+n})-\sum_{k=j}^{j+n-1}F_k\circ\pi_k\circ S_j^{(k-j)}=:\bar Q_{j+n}\circ S_j^{(n)}+R_{j,n}.
 $$
Let us take a function $g\colon\tilde X_j\to\bbR$ such that $\|g\|_\beta<\infty$, where $\beta>0$ is the H\"older exponent of the sequence of potentials that generate the Gibbs measures. Then by the exponential decay of correlations for Gibbs measures (see \cite[Theorem 3.3]{Nonlin}) there are $C>0$ and $\delta\in(0,1)$ such that 
$$
|m_{j}(g\cdot \bar Q_{j+n}\circ S_j^{(n)})|\leq C\delta^n\|g\|_\beta \|Q_{j+n}\|_{L^1(m_{j+n})}\to 0\,\text{ as }\,n\to\infty.
 $$
 By approximating a function $g\in L^q(m_j)$ in the $L^q$ norm by a H\"older continuous functions and using that $\bar Q_{j+n}\circ S_j^{(n)}$ are bounded in $L^p(m_j)$ we conclude that
 $$
|m_{j}(g\cdot \bar Q_{j+n}\circ S_j^{(n)})|\to 0\,\text{ as }\,n\to\infty,
 $$
 for all $g\in L^q(m_j)$. Thus, $\bar Q_{j+n}\circ S_j^{(n)}$ converges weakly to $0$ in $L^p(m_j)$. That is 
 $$
Q_j=\lim_{k\to\infty}R_{j,n_k-j}
 $$
 weakly in $L^p(m_j)$. Now, since $R_{j,n}=Q_j-\bar Q_{j+n}\circ S_j^{(n)}$ and $(S_j^{(n)})_*m_j=m_{j+n}$ we see that $\sup_{k}\|R_{j,n_k-j}\|_{L^p(m_j)}<\infty$. Therefore, by the Banach-Saks theorem, there is a subsequence of $n_k$ such that the Cesaro averages of $R_{j,n_k-j}$ along that subsequence converge to $Q_j$ in $L^p(m_j)$. Thus, along a further subsequence, the convergence holds $m_j$-almost everywhere. Since the functions $R_{j,n_k-j}$ depend only on $x_{j,0},x_{j,1}, \ldots$, we conclude that $Q_j$ depends only on these variables.

Next, let us assume \eqref{H reg} holds. Since the functions $u_j$ are uniformly H\"older continuous there exist measurable functions $\bar R_{j, n}(x_{j,0},x_{j,1}, \ldots)$ such that 
$$
\lim_{n\to\infty}|u_{j+n}\circ S_j^{(n)}(x)-\bar R_{j,n}(x_{j,0},x_{j,1}, \ldots)|=0, \text{ for }m_j\text{-a.e. $x\in \tilde X_j$. } 
 $$
 Then, writing $\bar R_{j,n}=\bar R_{j,n}(x_{j,0},x_{j,1}, \ldots)$ and $G_{j,n}=G_{j,n}(x_{j,0},x_{j,1}, \ldots)$ we see that along an appropriate subsequence of $n$'s we have
 $$
Q_j(x)=R_{j,n}+G_{j,n}+o(1)-\sum_{k=j}^{j+n-1}F_k\circ\pi_k\circ S_j^{(k-j
)},
 $$
 where the term $o(1)$ converges to $0$ as $n\to\infty$ (along the appropriate subsequence). Therefore, $Q_j$ depends only on $x_{j,0},x_{j,1}, \ldots$, that is, it is a function on $X_j$. Then 
 $$
F_j=Q_{j+1}\circ T_j-Q_j, \quad j\in \bbN_0.
 $$

 In order to complete the proof of the theorem, applying Theorem \ref{MT2} with the sequence of functions $F_j$ we we see that 
 $$
Q_j=\int_{X_0}Q_j\circ T_0^{(j)}\,dm_0+U_j-\sum_{k=j}^\infty\mathcal M_k\circ T_j^{(k-j)} \quad j\in \bbN_0,
 $$
 where $U_n$ and $\mathcal M_n$ satisfy all the properties described in Theorem \ref{MT2}. Recalling that $Q_j=H_j-u_j$, we conclude that~\eqref{H rep} holds.
\end{proof}
 
\section{Small perturbations of hyperbolic maps}
In this section, we prove Liv\v sic-type regularity results for  small random and sequential perturbations of a given hyperbolic map employing symbolic representations and the results in Section~\ref{SFT sec}.

\subsection{Hyperbolic sets.}\label{Subsec1}
Let $M$ be a compact $C^2$ Riemannian
manifold equipped with its Borel $\sig$-algebra $\mathcal G$. Denote by $\mathsf{d}(\cdot,\cdot)$ the induced metric.
Let $T:M\to M$ be a $C^2$ diffeomorphism.

\begin{definition}
A compact $T$-invariant subset $\Lambda\subset M$ is called a {\em hyperbolic set}  for $T$   if  there exists an open set $V$ with compact closure, constants $\la\in(0,1)$ and $\al_0,A_0,B_0>0$  and subbundles $\Gamma^s$ and $\Gamma^u$ of the tangent bundle $T\Lambda$ such that:
\vskip0.1cm
(i) The set $\{x\in M: \text{dist}(x,\Lambda)<\al_0\}$ is contained in a open subset $U$ of $V$ such that $TU\subset V$ and $T|_{U}$ is a diffeomorphism with \[ \sup_{x\in U}\max(\|D_xT\|, \|D_xT^{-1}\|)\leq A_0;\]
\vskip0.1cm
(ii) $T\Lambda=\Gamma^s\oplus\Gamma^u$, $DT(\Gamma^s)=\Gamma^s$, $DT(\Gamma^u)=\Gamma^u$ and the minimal angle between $\Gamma^s$ and $\Gamma^u$ is bounded below by $\al_0$;
\vskip0.1cm
(iii) For all $x\in \Lambda$ and $n\in\bbN$ we have 
$$
\|D_xT^n v\|\leq B_0\la^n\|v\|\,\,\,\forall \,v\in \Gamma_x^s\quad\text{and}
\quad \|D_xT^{-n} v\|\leq B_0\la^{n}\|v\|\,\,\,\, \forall \, v\in \Gamma_x^u.
$$
\end{definition}

\begin{definition}

A hyperbolic set is said to be:
(i) {\em locally maximal} if the set $U$ above can be chosen
so that $\Lambda=\bigcap_{n\in \mathbb{Z}} T^n U$ (that is, $\Lambda$ is the largest hyperbolic set contained in $U$);

(ii) hyperbolic attractor, if in addition, $U$ could be chosen so that $TU\subset U$
 (in the case where $M=\Lambda$, $T$ is said to be {\em Anosov}). 

We say that $\Lambda$ is a {\em basic hyperbolic set} if it is an infinite locally maximal hyperbolic
set such that $T|_\Lambda$ is topologically transitive.
\end{definition}

\vskip-2mm
 Henceforth, we assume that $\Lambda$ is topologically mixing\footnote{
The topological mixing assumption can be made without a loss
of generality. Indeed (see, e.g. \cite[Chapter 8]{Sh}), 
an arbitrary basic set $\Lambda$ can be decomposed
as $\Lambda=\bigcup_{j=1}^p \Lambda_j$ so that
$T\Lambda_j=\Lambda_{j+1\;\;mod\;\;p}$ where $\Lambda_j$ are topologically mixing
basic hyperbolic sets for $T^p$. Then we could apply the results discussed below
to $(T^p, \Lambda_j).$}
basic
hyperbolic set.
\\

A powerful tool for studying hyperbolic maps is given by symbolic representations. 
That is, every topologically mixing basic set $\Lambda$ admits a Markov partition (see \cite[Chapter 10]{Sh})
that gives rise to a 
semiconjugacy $\pi\colon \Sigma\to \Lambda$, where $\Sigma$ is a
topologically mixing subshift of a finite type. Let us denote by $S\colon \Sigma\to\Sigma$ the left shift and by $\cR$ the Markov partition of $\Lambda$ corresponding to the subshift $\Sigma$.

\subsection{Structural stability}\label{Subsec2}
Now, consider a sequence of $C^2$ maps $\cT=(T_j\colon M\to M)_{j\in\bbZ}$. 
 Denote by $\mathsf{d}_1(f,g)$ the $C^1$-distance between $f$ and $g$.
We have the following result (see \cite[Appendix C]{DolgHaf}).

\begin{theorem}
\label{ThStability}
If $\del_{1}(\cT):=\sup_j\mathsf{d}_{1}(T,T_j)$  is sufficiently  small, then there 
is a sequence  of sets $\Lambda_j\subset M$ and homeomorphisms  $h_j\colon \Lambda\to\Lambda_j$ (that we think of as a ``sequential conjugacy")  
such that 
$h_j$ and $h_j^{-1}$ are uniformly H\"older continuous,
  \begin{equation}\label{Cong}
 T_j\Lambda_j=\Lambda_{j+1}\, \text{ and }\, T_j\circ h_j=h_{j+1}\circ T.
 \end{equation}
Moreover $ \sup_j\|h_j-\text{Id}\|_{C^{0}}\to 0$ as $\del_1(\cT)\to 0.$ 
\footnote{Note that in Theorem \ref{ThStability} we can also consider one-sided sequences $(T_j)_{j\geq0}$ since they can be extended to two-sided ones. The reason we consider two-sided sequences  is  because the definition of hyperbolicity requires considering negative times to 
 define the unstable subspaces.}

The sets  $\Lambda_j$, $j\in \bbZ$ are sequentially hyperbolic for the sequence $\cT$ in the  following sense.
They are compact, 
 there exist constants $\la'\in(0,1)$, $\al_1, A_1,B_1>0$  and sequences of subbundles 
$\Gamma_j^s\!\!=\!\!\{\Gamma_{j,x}^s: x\in \Lambda_j\}$ and 
$\Gamma_j^u\!\!=\!\!\{\Gamma_{j,x}^u: x\in \Lambda_j\}$ of the tangent bundle $T\Lambda_j$
 such that, for each $j$:
\vskip0.2cm
(i) for each $j\in \bbZ$, the set $\{x\in M: \mathsf{d}(x,\Lambda_j)<\al_1\}$ is contained in an open  subset $U_j$ of $V$ such that $T_jU_j\subset V$ and $T_j|_{U_j}$ is a diffeomorphism satisfying
$$
\sup_j\sup_{x\in U_j}\max(\|D_xT_j\|, \|D_xT_j^{-1}\|)\leq A_1;
$$
\vskip0.2cm
(ii) $T\Lambda_j=\Gamma_j^s\oplus\Gamma_j^u$, $DT_j(\Gamma_j^s)=\Gamma_{j+1}^s$, $DT_j(\Gamma_j^u)=\Gamma_{j+1}^u$ and the minimal angle between $\Gamma_j^s$ and $\Gamma_j^u$ is bounded below by $\al_1$;
\vskip0.2cm
(iii) For every $n\in\bbN$, $j\in \bbZ$ and $x\in \Lambda_j$, we have 
\begin{equation}\label{AF1}
\|D_xT_j^{(n)} v\|\leq B_1 (\la')^n\|v\| \quad \text{for every}\,\,v\in \Gamma_{j,x}^s,
\end{equation}
and 
\begin{equation}\label{AF2}
\|D_xT_j^{(-n)} v\|\leq B_1 (\la')^{n}\|v\| \quad \text{for every }\,\,v\in \Gamma_{j,x}^u,
\end{equation}
where $T_{j}^{(-n)}=(T_{j-n}^{(n)})^{-1}$;
\vskip0.2cm
(iv) for $j\in \bbZ$, $T_j U_j\subset U_{j+1}$ and $\bigcap_{n=0}^\infty T_{j-n}^{(n)} U_{j-n}=\Lambda_j$.
\end{theorem}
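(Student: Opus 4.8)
The statement is the sequential (non-autonomous) version of the structural stability theorem, and the natural route is to adapt the classical ``hyperbolic fixed point'' proof of structural stability to a fixed point argument in a space of \emph{sequences} of maps, exploiting that all the hyperbolicity constants of $(T,\Lambda)$ are uniform. First I would set up the functional framework: using the exponential map of $M$ and an adapted metric, identify a uniform tubular neighbourhood of $\Lambda$ with a neighbourhood of the zero section of $T\Lambda$, and extend the invariant splitting $T\Lambda=\Gamma^s\oplus\Gamma^u$ to a continuous (almost invariant) splitting on this neighbourhood, so that $DT$ — hence $DT_j$ when $\del_1(\cT)$ is small — maps the stable cone strictly into itself while contracting it, and symmetrically for the unstable cone under the inverse. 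Let $Y$ be the complete metric space of bounded sequences $\eta=(\eta_j)_{j\in\bbZ}$ of continuous maps $\eta_j\colon\Lambda\to M$ with $\sup_j\sup_{x\in\Lambda}\mathsf d(\eta_j(x),x)$ small, equipped with the sup–sup distance. The conjugacy relations $T_j\circ h_j=h_{j+1}\circ T$ are exactly the condition that $(h_j)\in Y$ be a zero of the ``error'' sequence $E_j(\eta):=T_j\circ\eta_j-\eta_{j+1}\circ T$, which, read through the exponential chart, lives in a space of small sections over $\Lambda$.

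Next I would turn this into a contraction. Given $\eta\in Y$, for each $x$ split $E_j(\eta)(x)$ into its stable and unstable components and define a new sequence $\mathcal F(\eta)$ by correcting $\eta_{j+1}$ along the stable direction (pushing the stable error forward, a contraction with factor $\approx\la$) and correcting $\eta_j$ along the unstable direction (pulling the unstable error backward, again a contraction with factor $\approx\la$). Since $DT_j=DT+O(\del_1(\cT))$ uniformly in $j$, the cross terms and the discrepancy between $T_j$ and $T$ contribute a factor that is small once $\del_1(\cT)$ is small, so $\mathcal F$ maps a small ball of $Y$ into itself and is a uniform contraction there; the Banach fixed point theorem yields a unique $(h_j)_{j\in\bbZ}$. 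Because the contraction depends Lipschitz-continuously on the perturbation through $\del_1(\cT)$ and equals the identity sequence when $\del_1(\cT)=0$, one gets $\sup_j\|h_j-\text{Id}\|_{C^0}\to0$ as $\del_1(\cT)\to0$. Injectivity of each $h_j$ follows from expansivity of $T|_\Lambda$: if $h_j(x)=h_j(y)$ then the whole sequential orbits through that point coincide, forcing $\mathsf d(T^kx,T^ky)$ to stay uniformly small for all $k\in\bbZ$, hence $x=y$; being a continuous injection of the compact set $\Lambda$, $h_j$ is then a homeomorphism onto $\Lambda_j:=h_j(\Lambda)$, and $T_j\Lambda_j=\Lambda_{j+1}$ is immediate from the conjugacy relation together with surjectivity of $T|_\Lambda$.

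The uniform Hölder regularity of $h_j$ and $h_j^{-1}$ is the step I expect to be the main obstacle, but it is handled by the standard argument: estimating $\mathsf d(h_j(x),h_j(y))$ by splitting the common near-orbit into a forward part (controlled by the stable contraction $\la$ and the $C^1$ bound $A_0$) and a backward part (controlled by the unstable contraction), one obtains $\mathsf d(h_j(x),h_j(y))\le C\,\mathsf d(x,y)^{\theta}$ with $C$ and $\theta\in(0,1]$ depending only on $\la$, $A_0$, $B_0$ and an upper bound for $\del_1(\cT)$, hence independent of $j$. Running the same scheme with the roles of $(T_j,\Lambda_j)$ and $(T,\Lambda)$ interchanged — i.e.\ viewing $h_j^{-1}$ as the sequential conjugacy built from the inverse maps — gives the symmetric bound for $h_j^{-1}$. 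Finally, the sequential hyperbolicity of the $\Lambda_j$ is obtained by a second, entirely analogous contraction: the subbundles $\Gamma_j^s,\Gamma_j^u$ are the unique bounded sequences of continuous cone-field sections fixed by the non-autonomous graph transform associated with $DT_j$, and the uniform angle bound $\al_1$ and rates $\la',B_1$ follow from the uniform hyperbolicity of $(T,\Lambda)$ plus smallness of $\del_1(\cT)$; choosing the $U_j$ to be tubular neighbourhoods of $\Lambda_j$ of a fixed small radius makes (i)–(iv) hold, with $A_1$ coming from part (i) for $T$ and $C^1$-closeness, and $\bigcap_{n\ge0}T_{j-n}^{(n)}U_{j-n}=\Lambda_j$ from local maximality of $\Lambda$ transported by the $h_j$.
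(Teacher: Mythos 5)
The paper does not actually prove Theorem~\ref{ThStability}: it is quoted verbatim from the reference \cite{DolgHaf} (Appendix~C there), so there is no in-paper argument to compare against. Judged on its own, your sketch follows the standard Anosov--Moser structural-stability scheme adapted to sequences, which is precisely the route taken in the cited source and in the Anosov-families literature \cite{AF,MR}: solve the conjugacy equation $T_j\circ h_j=h_{j+1}\circ T$ by a contraction on bounded sequences of $C^0$-small displacements, with invertibility of the linearized operator coming from solving forward along $\Gamma^s$ and backward along $\Gamma^u$; get injectivity from expansivity of $T|_\Lambda$; get the splittings $\Gamma^{s}_j,\Gamma^{u}_j$ from a non-autonomous graph transform/cone-field argument. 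This is correct in outline and I have no objection to the overall strategy.

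Two places deserve more care than your sketch gives them. First, there is an ordering issue in the H\"older continuity of $h_j^{-1}$: you propose to ``run the same scheme with the roles interchanged,'' but that requires the sequential hyperbolicity of $(T_j,\Lambda_j)$, which you only establish afterwards; either reorder the argument, or derive the H\"older bound for $h_j^{-1}$ directly from the expansivity-type estimate that $\mathsf{d}(x,y)\le C\te^{N}$ whenever $\mathsf{d}(T^nx,T^ny)$ stays small for $|n|\le N$, combined with the Lipschitz bound $A_0$ on the $T_j^{\pm1}$. Second, item (iv) is not just ``local maximality transported by the $h_j$'': the inclusion $T_jU_j\subset U_{j+1}$ uses the attractor property $TU\subset U$ together with $C^0$-closeness and a uniform choice of the radii of the $U_j$, and the identity $\bigcap_{n\ge0}T_{j-n}^{(n)}U_{j-n}=\Lambda_j$ requires a genuine sequential shadowing/uniqueness argument (any point whose backward orbit under the sequence stays in the $U_{j-n}$ is shadowed by, hence equal to, a point of $\Lambda_j$); the $h_j$ alone only give one inclusion. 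Neither point is a fatal gap, but both need to be written out for a complete proof.
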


 Let $\pi_j=h_j\circ \pi$, $j\in \bbZ$. Then the family $\pi_j$, $j\in \bbZ$ provides a semiconjugacy between
the sequence $\cT$ and the subshift $\Sigma$ describing the symbolic dynamics
of $T$. More precisely, the maps $\pi_j$ are surjective and
\[
T_j\circ \pi_j=\pi_{j+1}\circ S, \quad j\in \bbZ.
\]

\subsection{Local stable and unstable manifolds}\label{Subsec3}
We fix $j\in \bbZ$.
For $\ve>0$ small enough and $x\in\Lambda_j$ define $W_j^{s}(x,\ve)$ to be the set of all points $y\in \Lambda_j$ such that
$\mathsf{d}(T_j^{(n)}x,T_j^{(n)} y)\leq \ve$ for all $n\in \bbN$ and $\mathsf{d}(T_j^{(n)}x,T_j^{(n)} y)\to 0$ when $n\to \infty$. Similarly, we define $W_j^{u}(x,\ve)$ as the set of all points $y\in \Lambda_j$ such that
$
\mathsf{d}(T_j^{(-n)}x, T_j^{(-n)}y)\leq \ve
$
for all $n\in \bbN$ and $\mathsf{d}(T_j^{(-n)}x,T_j^{(-n)}y)\to 0$ when $n\to \infty$.  Then $W_j^{s}(x,\ve)$ and $W_j^u(x,\ve)$ are manifolds, and the tangent space of $W_j^{s}(x,\ve)$ at $x$ is $\Gamma_{j, x}^s$, while the tangent space of $W_j^{u}(x,\ve)$ at $x$ is $\Gamma_{j, x}^u$ (see  \cite{MR}). Moreover, there are constants $C>0$ and $\del\in(0,1)$ such that for every $j$,
\begin{equation}\label{Stab}
\mathsf{d}(T_{j}^{(n)} x, T_j^{(n)} y)\leq C \del^n \mathsf{d}(x,y)\text{ for all }y\in W_j^s(x,\ve)
\end{equation}
and 
\begin{equation}\label{Uns}
\mathsf{d}(T_j^{(-n)} x, T_j^{(-n)} y)\leq C\del^n\mathsf{d}(x,y)\text{ for all }y\in W_j^u(x,\ve).
\end{equation}
Furthermore (see \cite[Theorem A]{MR}), there exists $r>0$ such that for all $j$ and all $x,y\in\Lambda_j$ with $\mathsf{d}(x,y)\leq r$ the sets $W_j^s(x,\varepsilon)$ and $W_j^u(y,\varepsilon)$ intersect at a single point $z$ denoted by $[x,y]_j$.

Let $(m_j)$ be the sequential Gibbs measures corresponding to some sequence of H\"older continuous functions $\phi_j$ on $\Lambda_j$ with exponent $\beta$ (see \cite[Appendix C]{DolgHaf}). Note that one particular choice is $\phi_j=-\ln\text{Jac}(T_j)$, which results in sequential SRB measures, see \cite[Theorem C.5]{DolgHaf}. 
Applying Theorem~\ref{SFT Thm2}, we have the following result.

\begin{theorem}\label{ThmSDS}
Let $G_j:\Lambda_j\to\bbR, j\in\bbN_0$ be H\"older continuous functions with exponent $\beta$ and $\sup_j\|G_j\|_\beta<\infty$. Suppose that there are measurable functions $H_j\colon \Lambda_j\to\bbR$ such that
$$
G_j=H_{j+1}\circ T_j-H_j \quad  j\in\bbN_0,
$$
where $H_{n_k}\in L^p(m_{n_k})$ and $\sup_k\|H_{n_k}\|_{L^p(m_{n_k})}<\infty$ for some subsequence $(n_k)_k$ of $\bbN$ and $p>1$.

Then $H_j\in L^t(m_j)$ for all $j$ and $1\leq t<\infty$, $\sup_j\left\|H_j-\int_{X_j}H_j\,dm_j\right\|_{L^t}<\infty$, and $m_j$-a.s. we have
\begin{equation}\label{H rep1}
 H_j\circ\pi_j=u_j+\int_{X_0}(H_j-u_j)\circ T_0^{(j)}\,dm_0+U_j-\sum_{k=j}^\infty\mathcal M_k\circ R^{k-j},
\end{equation}
where $R$ is the one-sided subshift corresponding to $S$, $U_n$ and $\mathcal M_n$ satisfy all the properties described in Theorem \ref{MT2} applied with the functions $F_j$ from \eqref{CobRed} with $f_j=G_j\circ\pi_j$ and $u_j$ also comes from \eqref{CobRed}. 

\end{theorem}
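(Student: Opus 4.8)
The plan is to transport the entire problem to the symbolic model of Subsection~\ref{Subsec2} and then invoke Theorem~\ref{SFT Thm2}. Recall that the maps $\pi_j=h_j\circ\pi\colon\Sigma\to\Lambda_j$ are surjective and intertwine the dynamics, $T_j\circ\pi_j=\pi_{j+1}\circ S$, where $\Sigma$ is the two-sided topologically mixing subshift of finite type coding $T$ and $S$ is the left shift; in the notation of Section~\ref{SFT sec} we take $\tilde X_j=\Sigma$, $S_j=S$ for all $j$, and we write $m_j$ both for the sequential Gibbs measure on $\Lambda_j$ and for its lift to $\Sigma$, so that $(\pi_j)_*m_j=m_j$. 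Put $f_j:=G_j\circ\pi_j$ and $\tilde H_j:=H_j\circ\pi_j$. Composing $G_j=H_{j+1}\circ T_j-H_j$ with $\pi_j$ and using the intertwining relation gives
\[
f_j=\tilde H_{j+1}\circ S-\tilde H_j\qquad m_j\text{-a.e. on }\Sigma,
\]
since the $\pi_j$-preimage of an $m_j$-full subset of $\Lambda_j$ is $m_j$-full in $\Sigma$.

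Next I would check the two hypotheses of Theorem~\ref{SFT Thm2} for $(f_j)$, $(\tilde H_j)$. Since $\pi$ is H\"older (from the Markov partition) and $h_j,h_j^{-1}$ are uniformly H\"older (Theorem~\ref{ThStability}), the coding maps $\pi_j$ are uniformly H\"older; together with $\sup_j\|G_j\|_\beta<\infty$ this yields $\sup_j\|f_j\|_\alpha<\infty$ for a common exponent $\alpha>0$, after shrinking $\beta$ (hence $\alpha$) if necessary so that the lifted potentials $\phi_j\circ\pi_j$ and the observables $f_j$ lie within the uniform-H\"older/admissibility framework of Section~\ref{SFT sec}. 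For the integrability alternative, the identity $(\pi_{n_k})_*m_{n_k}=m_{n_k}$ gives $\|\tilde H_{n_k}\|_{L^p(m_{n_k})}=\|H_{n_k}\|_{L^p(m_{n_k})}$, so $\sup_k\|\tilde H_{n_k}\|_{L^p(m_{n_k})}<\infty$ with $p>1$ --- exactly the first alternative in Theorem~\ref{SFT Thm2}. (Alternatively, the uniform H\"older control of the $h_j$ could be used to verify the regularity condition~\eqref{H reg}.)

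Applying Theorem~\ref{SFT Thm2} to $(f_j)$ with the measurable coboundary $(\tilde H_j)$ then yields $\tilde H_j\in L^t(m_j)$ on $\Sigma$ for every $j$ and every finite $t\geq1$, the uniform estimate $\sup_j\|\tilde H_j-\int_\Sigma\tilde H_j\,dm_j\|_{L^t}<\infty$, and
\[
\tilde H_j=u_j+\int_{X_0}(\tilde H_j-u_j)\circ T_0^{(j)}\,dm_0+U_j-\sum_{k=j}^\infty\mathcal M_k\circ T_j^{(k-j)}\qquad m_j\text{-a.e.},
\]
where $u_j$ is the term in the reduction~\eqref{CobRed} of $f_j=G_j\circ\pi_j$ and $U_n,\mathcal M_n$ are the functions produced by Theorem~\ref{MT2} for the associated sequence $F_j$. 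In the present situation $X_j$ is the one-sided shift space and the one-sided shift is $R$, so $T_0^{(j)}=R^j$, $T_j^{(k-j)}=R^{k-j}$, and (recalling that $\tilde H_j-u_j$ depends only on the non-negative coordinates, so descends to a function on $X_j$) substituting $\tilde H_j=H_j\circ\pi_j$ turns this into~\eqref{H rep1}. The remaining integrability claims follow by transporting back through $(\pi_j)_*m_j=m_j$: $\|H_j\|_{L^t(m_j)}=\|\tilde H_j\|_{L^t(m_j)}$ and $\|H_j-\int_{\Lambda_j}H_j\,dm_j\|_{L^t}=\|\tilde H_j-\int_\Sigma\tilde H_j\,dm_j\|_{L^t}$, uniformly in $j$.

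I expect the only genuinely delicate step to be the second one, namely the bookkeeping of H\"older exponents: one must confirm that, after pulling the potentials $\phi_j$ and the observables $G_j$ back by $\pi_j$, the resulting sequences on $\Sigma$ still satisfy the quantitative admissibility and uniform-H\"older assumptions underpinning Section~\ref{SFT sec} --- this is exactly where the uniform H\"older estimates for $h_j$ and $h_j^{-1}$ from Theorem~\ref{ThStability} are needed. Everything else is a direct translation of Theorem~\ref{SFT Thm2} through the semiconjugacy $\pi_j$.
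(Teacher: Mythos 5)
Your proposal is correct and follows exactly the route the paper intends (the paper offers no written proof beyond the sentence ``Applying Theorem~\ref{SFT Thm2}, we have the following result''): lift via the semiconjugacies $\pi_j=h_j\circ\pi$, check the uniform H\"older bound on $f_j=G_j\circ\pi_j$ and the $L^p$ alternative using $(\pi_{n_k})_*m_{n_k}=m_{n_k}$, apply Theorem~\ref{SFT Thm2}, and transport the conclusions back. Your attention to the exponent bookkeeping and to the fact that $\tilde H_j-u_j$ descends to the one-sided shift $X_j$ (so that $T_0^{(j)}=R^{j}$ in \eqref{H rep1}) is exactly the right level of care.
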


\subsection{Liv\v sic type regularity: reduction to two-sided subshifts}
The following result has applications mainly to small random perturbations of $T$ discussed in the next section, but we formulate it in the more general sequential setup.
\begin{theorem}\label{MT3}
Let $F_j\colon\Lambda_j\to\bbR, j\in\bbZ$ be a sequence of H\"older continuous functions with the same exponent $\alpha>0$ and with uniformly bounded H\"older constants. Let $C'>0$ be an upper bound of the H\"older constant of $F_j$. 
Suppose that there are measurable functions $H_j\colon \Lambda_j\to\bbR$, $j\in \bbZ$ such that
\[
F_j=H_{j+1}\circ T_j-H_j \quad \text{for $j\in \bbZ$,}
\]
and that the sequence of functions $H_j\circ\pi_j$, $j\in \bbN_0$ is uniformly  continuous\footnote{meaning that for each $\varepsilon>0$ there is $\delta>0$ such that $|H_j\circ \pi_j(x)-H_j\circ \pi_j(y)|<\varepsilon$ for each $j\in \bbN_0$ and $x, y\in  \Sigma$ whose distance is less than $\delta$.}. Then $H_j$ has a version $\tilde H_j$ that is H\"{o}lder continuous with exponent $\alpha$ and H\"older constants uniformly bounded by $C''C'$, where $C''$ is a constant that depends only on the sequence $\cT$ and not on the functions $F_j$. Moreover,
\[
F_j=\tilde H_{j+1}\circ T_j-\tilde H_j, \quad j\in \bbZ.
\]    
\end{theorem}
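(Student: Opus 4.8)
The plan is to run, in the sequential setting, the classical telescoping proof of Liv\v sic, performed directly on the sets $\Lambda_j$ and exploiting that the hyperbolic data of Theorem~\ref{ThStability} and the local stable/unstable manifolds of Subsection~\ref{Subsec3} are \emph{uniform in} $j$. The symbolic side enters only to make the a priori hypothesis usable. Write $Q_j:=H_j\circ\pi_j$; recall $\pi_j=h_j\circ\pi$ with $h_j,h_j^{-1}$ uniformly H\"older and $\pi$ a fixed Markov coding, and note that the hypothesis is precisely that $\{Q_j\}_j$ is equicontinuous on the compact space $\Sigma$ (one modulus for all $j$). Since $T_k\circ\pi_k=\pi_{k+1}\circ S$ one has $T_j^{(n)}\circ\pi_j=\pi_{j+n}\circ S^n$ and $T_j^{(-n)}\circ\pi_j=\pi_{j-n}\circ S^{-n}$, and by the Markov property $\pi_j$ intertwines the symbolic local product structure of $\Sigma$ with the geometric one $[\cdot,\cdot]_j$ on $\Lambda_j$ (this survives the conjugacies $h_j$, which carry stable/unstable leaves of $T$ to those of $\cT$). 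Finally, since $\pi_j$ is a continuous surjection of compact metric spaces, hence a quotient map, $Q_j$ continuous forces each $H_j$ to be continuous on $\Lambda_j$.

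The core step is a local H\"older estimate with constant depending only on $\cT$. Fix $j$, take $x,y\in\Lambda_j$ with $\mathsf{d}(x,y)\le r$ ($r$ from Subsection~\ref{Subsec3}), put $z:=[x,y]_j\in W_j^s(x,\ve)\cap W_j^u(y,\ve)$, and choose codings $\hat x\in\pi_j^{-1}(x)$, $\hat y\in\pi_j^{-1}(y)$ and $\hat z\in\pi_j^{-1}(z)$ with $\hat z$ agreeing with $\hat x$ in the forward coordinates and with $\hat y$ in the backward coordinates; by uniformity of the product structure $\mathsf{d}(x,z),\mathsf{d}(z,y)\le\kappa\,\mathsf{d}(x,y)$ with $\kappa$ independent of $j$. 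Iterating $F_k=H_{k+1}\circ T_k-H_k$ forward gives, for all $n$,
\[
H_j(x)-H_j(z)=\big(Q_{j+n}(S^n\hat x)-Q_{j+n}(S^n\hat z)\big)-\sum_{k=0}^{n-1}\big(F_{j+k}(T_j^{(k)}x)-F_{j+k}(T_j^{(k)}z)\big).
\]
By~\eqref{Stab} the $k$-th summand is $\le C'(C\delta^k)^\alpha\mathsf{d}(x,z)^\alpha$, so the series converges, while $S^n\hat x$ and $S^n\hat z$ agree on the coordinates $\ge -n$, so $Q_{j+n}(S^n\hat x)-Q_{j+n}(S^n\hat z)\to0$ by equicontinuity of $\{Q_i\}_i$. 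Hence $|H_j(x)-H_j(z)|\le\frac{C'C^\alpha}{1-\delta^\alpha}(\kappa\,\mathsf{d}(x,y))^\alpha$. Telescoping backward with $T_j^{(-k)}$ and~\eqref{Uns}, and using that $S^{-n}\hat z$ and $S^{-n}\hat y$ agree on the coordinates $\le n-1$, gives the same bound for $|H_j(z)-H_j(y)|$. Adding, $|H_j(x)-H_j(y)|\le C''C'\,\mathsf{d}(x,y)^\alpha$ whenever $\mathsf{d}(x,y)\le r$, with $C''$ depending only on $C,\delta,\alpha,\kappa,r$.

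To globalize, note that each $H_j$ is already continuous on the compact set $\Lambda_j$, so it suffices to control $|H_j(x)-H_j(y)|$ when $\mathsf{d}(x,y)\ge r$; here one uses topological mixing of $\cT$ (a sequential shadowing argument): $x$ and $y$ are joined by an orbit segment of bounded length $N_0=N_0(\cT)$, so $|H_j(x)-H_j(y)|$ is bounded by a constant multiple of $\sup_k\|F_k\|_{L^\infty}$ together with two local H\"older errors, whence $|H_j(x)-H_j(y)|\le C''C'\,\mathsf{d}(x,y)^\alpha$ there as well (after enlarging $C''$, still only $\cT$-dependent). Setting $\tilde H_j:=H_j$ then yields functions that are H\"older with exponent $\alpha$ and H\"older constant $\le C''C'$ uniformly in $j$, and $F_j=\tilde H_{j+1}\circ T_j-\tilde H_j$ holds on all of $\Lambda_j$.

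The main obstacle is making the remainder terms in the two telescopings vanish, which is exactly where the a priori equicontinuity hypothesis is spent: one must track that the chosen codings propagate correctly under $S^{\pm n}$ (Markov bookkeeping) and, more seriously, that the equicontinuity of $\{H_j\circ\pi_j\}$ is available both along $j+n\to+\infty$ and along $j-n\to-\infty$; the unstable/backward half genuinely needs the hypothesis for negative indices, so the standing assumption should be read as holding for all $j\in\bbZ$. Everything else is routine Liv\v sic bookkeeping with all constants kept uniform via Theorem~\ref{ThStability}; the secondary delicate point is the globalization of the seminorm bound over the $\Lambda_j$, which need not be connected and hence requires the mixing argument rather than a naive chaining.
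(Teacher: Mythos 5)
Your core argument is the same as the paper's: a two-sided telescoping along $W_j^s(x,\ve)$ and $W_j^u(y,\ve)$ through the bracket point $z=[x,y]_j$, with the geometric sums controlled by \eqref{Stab}--\eqref{Uns} and the uniform H\"older bound $C'$, and the two remainder terms killed by passing to the symbolic level and invoking the equicontinuity of $\{H_i\circ\pi_i\}_i$. Your observation that the backward half of the telescoping consumes the equicontinuity hypothesis along indices $j-n\to-\infty$ is correct and applies equally to the paper's own proof. However, there is one gap and one incorrect add-on.

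The gap is in the coding step. For an \emph{arbitrary} pair $x,y\in\Lambda_j$ with $\mathsf{d}(x,y)\le r$ you cannot in general choose $\hat x\in\pi_j^{-1}(x)$, $\hat z\in\pi_j^{-1}(z)$ agreeing in all forward coordinates (and similarly for $\hat y,\hat z$ backward): if $x$ and $y$ are close but lie near $\partial\cR$, they need not admit codings with a common zeroth symbol, so the symbolic splice is not admissible and need not project to $[x,y]_j$. This is precisely why the paper first restricts to the dense set of pairs coming from periodic points of $T$ whose orbits avoid $\partial\cR$ (so that codings are unique and the forward/backward coordinates of $z_j$ eventually agree with those of $x_j$, resp.\ $y_j$), proves the local H\"older bound there, and only then extends by density. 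Your own preliminary step --- that $H_j$ descends to a continuous function on $\Lambda_j$ because $\pi_j$ is a quotient map --- makes this extension immediate, so the fix is easy, but as written the choice of $\hat z$ is unjustified. The incorrect add-on is the ``globalization'' for $\mathsf{d}(x,y)\ge r$: in a sequential system two points of the \emph{same} fiber $\Lambda_j$ cannot be joined by an orbit segment (an orbit starting in $\Lambda_j$ lands in $\Lambda_{j+N_0}$), and the coboundary relation only compares values of $H$ at \emph{different} time indices, so no shadowing/mixing argument bounds $|H_j(x)-H_j(y)|$ by Birkhoff sums of the $F_k$. This step should simply be dropped: the theorem (and the paper's proof) only asserts the local H\"older estimate on pairs at distance at most $r$; a genuinely global seminorm bound would additionally require a uniform bound on $\sup_j\mathrm{osc}(H_j)$, which is not claimed.
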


\begin{proof}
The proof is a modification of the proof of \cite[Theorem 2]{PP}.
Fix some index $j$ and take two points $\tilde x_j=\pi_jx_j=h_j(\pi x_j)$ and $\tilde y_j=\pi_j y_j=h_j(\pi y_j)$ in $\Lambda_j$ such that $\mathsf{d}(\tilde x_j,\tilde y_j)<r$. Let $\tilde z_j=[\tilde x_j,\tilde y_j]_j$. Write $\tilde z_j=\pi_jz_j=h_j(\pi z_j)$.  Let us assume that both $\pi x_j$ and $\pi y_j$ are two periodic points of $T$ such that the entire (finite) $T$-orbit of both points does not intersect $\partial\cR$. The collection of such pairs of points is dense in $\Lambda\times\Lambda$. Next, using that $h_j$ are uniformly H\"older continuous and the second equality in~\eqref{Cong}, it follows from \eqref{Stab} and \eqref{Uns} that there exist $\underline C>0$ and $\te\in(0,1)$ such that for all $n\in\bbN$ we have
\[
\mathsf{d}(T^n(\pi z_j),T^n(\pi x_j))\leq \underline C\te^n    \]
and
\[
\mathsf{d}(T^{-n}(\pi z_j),T^{-n}(\pi y_j))\leq \underline C\te^n.    
\]
In particular, there exists $N\in\bbN$ such that for all $n\geq N$ the points $T^n(\pi z_j)$ and $T^n(\pi x_j)$ belong to the interior of a same partition element of $\cR$ and the points $T^{-n}(\pi z_j)$ and $T^{-n}(\pi y_j)$ belong to the interior of a same partition element of $\cR$. Thus, the points $\pi x_j,\pi y_j$ and $\pi z_j$ have a unique symbolic representation, and the coordinates indexed by $n\geq N$ of $x_j$ and $z_j$ coincide, while the coordinates indexed by $n\leq-N$ of $y_j$ and $z_j$ coincide.
In particular,
\begin{equation}\label{1}
\lim_{n\to\infty}d(S^nx_j,S^nz_j)=0
\end{equation}
 and 
 \begin{equation}\label{2}
\lim_{n\to\infty}d(S^{-n}z_j,S^{-n}y_j)=0,
\end{equation}
where $d$ denotes the metric on $\Sigma$ given by $d(a,b)=2^{-\inf\{|k|: a_k\not=b_k \}}$,  $a=(a_k)_k$ and $b=(b_k)_k$. 

Next, write
\[
\begin{split}
& -\sum_{k=0}^{n-1}(F_{j+k}(T_j^{(k)}\tilde x_j)-F_{j+k}(T_j^{(k)}\tilde z_j))-\sum_{k=1}^{n}(F_{j-k}(T_{j}^{(-k)}\tilde y_{j})-F_{j-k}(T_j^{(-k)}\tilde z_j))\\
&=H_j(\tilde x_j)-H_j(\tilde y_j)+(H_{j-n}(T_j^{(-n)}\tilde y_j)-H_{j-n}(T_j^{(-n)}\tilde z_j))\\
&\phantom{=}
+(H_{j+n}(T_j^{(n)}\tilde z_j)-H_{j+n}(T_j^{(n)}\tilde x_j)).
\end{split}\]
Since  $H_j\circ\pi_j$, $j\in \bbN_0$ are uniformly continuous,  we conclude that the last two terms on the right-hand side above converge to $0$. Indeed, by \eqref{1},
\[
\begin{split}
|H_{j+n}(T_j^{(n)}\tilde z_j)-H_{j+n}(T_j^{(n)}\tilde x_j)|&=|H_{j+n}\circ\pi_{j+n}(S^n z_j)-H_{j+n}\circ \pi_{j+n}(S^nx_j)|\to 0.
\end{split}\]
 The proof that the other term converges to $0$  proceeds similarly using~\eqref{2}.
We conclude that 
\[
\begin{split}
&H_j(\pi_jx_j)-H_j(\pi_j y_j)=H_j(\tilde x_j)-H_j(\tilde y_j)\\
&=
-\lim_{n\to\infty}\Bigg(\sum_{k=0}^{n-1}\left(F_{j+k}(T_j^{(k)}\tilde x_j)-F_{j+k}(T_j^{(k)}\tilde z_j)\right)\\
&\phantom{=}+\sum_{k=1}^{n}\left(F_{j-k}(T_{j}^{(-k)}\tilde y_{j})-F_{j-k}(T_j^{(-k)}\tilde z_j)\right)\Bigg).
\end{split}
\]
On the other hand, using \eqref{Stab} for $k\geq0$ we have
\[
\left|F_{j+k}(T_j^{(k)}\tilde x_j)-F_{j+k}(T_j^{(k)}\tilde z_j))\right|\leq C'\mathsf{d}(T_j^{(k)}\tilde x_j, T_j^{(k)}\tilde z_j)^\alpha \le C (C')^\alpha\left(\delta^k\mathsf{d}(\tilde x_j, \tilde y_j)\right)^\alpha,
\]
and using \eqref{Uns} we have
\[
\left|F_{j-k}(T_{j}^{(-k)}\tilde y_{j})-F_{j-k}(T_j^{(-k)}\tilde z_j)\right|\leq  C(C')^\alpha \left(\delta^k\mathsf{d}(\tilde x_j, \tilde y_j)\right)^\alpha,
\]
where $\alpha$ is the H\"older exponent of $F_j$'s. 
We thus conclude that there exists $C''>0$ such that 
\[
\left|H_j(\pi_jx_j)-H_j(\pi_j y_j)\right|\leq C''\left(d_j(\tilde x_j,\tilde y_j)\right)^\alpha= C''\left(d_j(\pi_jx_j,\pi_jy_j)\right)^\alpha.
\]
Now, this inequality holds for a dense set of pairs of points $\pi_j x_j$ and $\pi_j y_j$ within the set $\{(\tilde x_j,\tilde y_j)\in\Lambda_j\times\Lambda_j: \mathsf{d}(\tilde x_j,\tilde y_j)\leq r\}$.
Thus, $H_j$ has a version which is H\"older continuous with the same exponent $\alpha$ as $F_j$ and H\"older constant not exceeding $2C''$.
\end{proof}

\begin{remark}
A family $(T_j)_{j\in \bbZ}$ of $C^2$ maps $T_j\colon M\to M$ is said to be an Anosov family (see \cite{AF,MR}) if there are constants $\lambda'\in (0, 1)$, $\alpha_1, A_1, B_1>0$ and a sequence of subbundles $\Gamma_j^s\!\!=\!\!\{\Gamma_{j,x}^s: x\in M\}$ and
$\Gamma_j^u\!\!=\!\!\{\Gamma_{j,x}^u: x\in M\}$ of the tangent bundle $TM$ such that~\eqref{AF1} and~\eqref{AF2} hold for $n\in \bbN$, $j\in \bbZ$ and $x\in M$. We note that we can allow $M$ to depend on $j$, that is, $T_j\colon M_j \to M_{j+1}$ where $M_j$, $j\in \bbZ$ is a sequence of compact $C^2$ Riemannian manifolds. Moreover, it is not necessary to restrict to the case when the hyperbolicity is global, that is, just as in Section~\ref{Subsec2}, the hyperbolicity requirements can be posed on a sequence $(\Lambda_j)_{j\in  \bbZ}$ of compact subsets of $M$ satisfying the first equality in~\eqref{Cong}.

The discussion in Section~\ref{Subsec2} shows that if $T$ is an $C^2$ Anosov diffeomorphism on $M$ and $(T_j)_{j\in \bbZ}$ is a sequence of $C^2$ maps $T_j\colon M\to M$ such that $\sup_j \mathsf{d}_1(T_j, T)$ is sufficiently small, then $(T_j)_{j\in \bbZ}$ is an Anosov family. 
On the other hand, it is possible to construct examples of Anosov families which are not of this type. In fact, examples of Anosov families can be constructed that are not sequences of Anosov diffeomorphisms (see~\cite[Example 3]{AF}).

In what follows, we will explain how to adapt the proof of Theorem \ref{MT3} to Anosov families. We assume that the Markov partition $\cR_n$ at time $n$ satisfies the following regularity condition. For every $\delta>0$ the volume of the set of points in $M$ whose distance from $\partial\cR_n$ does not exceed $\delta$ is smaller than $C\delta$ for some constant $C>0$ that does not depend on $n$. This condition is needed in order to avoid using periodic points that are not present for more general Anosov families, as will be explained below. 
 The proof of Theorem \ref{MT3} proceeds similarly with the following modifications. We fix $\varepsilon>0$ small enough and consider points $\tilde x_j$ and $\tilde y_j$ such that 
 $$
\text{dist}(T_j^{(n)}\tilde x_j,\partial\cR_{j+n})\geq \frac{\varepsilon}{n^2}
 $$
 and 
 $$
\text{dist}(T_j^{(-n)}\tilde y_j,\partial\cR_{j-n})\geq \frac{\varepsilon}{n^2}.
 $$
 Notice that the volume measure on $M\times M$ of such pairs of points is at least $1-C'\varepsilon$ for some $C''>0$.
 Then using \eqref{Stab} and \eqref{Uns} we find that the point $\tilde z_j$ must have a unique coding and \eqref{1} and \eqref{2} hold. Arguing like at the end of the proof of Theorem \ref{MT3} we find that on a set of measure greater or equal to $1-C''\varepsilon$ the function $H_j$ is H\"older continuous with H\"older constant, which does not depend on $\varepsilon$. Now we get the H\"older continuity almost everywhere with respect to the volume measure by taking $\varepsilon\to0$.
\end{remark}

\subsection{Application to small random perturbations of hyperbolic maps}
Let $M, T$ and $\Lambda$ be as in the beginning of Section \ref{Subsec1}.
Let $(\Omega, \mathcal F, \mathbb P)$ be an arbitrary probability space, and let $\sigma \colon \Omega \to \Omega$ be an invertible ergodic measure-preserving transformation on $(\Omega, \mathcal F, \mathbb P)$. Let $T_\om \colon M\to M, \om\in\Omega$ be a family of maps such that $(\om,x)\to T_\om(x)$ is measurable and $x\mapsto T_\om(x)$ is $C^2$ for $\om \in \Om$. We assume that $\mathsf{d}_1(T_\om,T)<\varepsilon$ for $\bbP$ a.e. $\om \in \Om$ for some sufficiently small constant $\varepsilon>0$.  Let $\tau \colon \Omega \times M \to \Omega \times M$ be the associated skew-product transformation as in~\eqref{tau}. Then, there are random sets $\Lambda_\om$, $\om \in \Om$ such that $T_\om:\Lambda_\om\to\Lambda_{\sigma\om}$ and all the properties in Sections \ref{Subsec1}-\ref{Subsec3} hold for $\bbP$-a.e. $\om$ with $T_{\sigma^j\om}$ instead of $T_j$ and $\Lambda_{\sigma^j \om}$ instead of $\Lambda_j$, and with constants that do not depend on $\om$.
Let $m_\om$ be a random Gibbs measure associated with a random function $\phi_\om:\Lambda_\om\to\bbR$ such that $\esssup_{\om\in\Omega}\|\phi_\om\|_\beta<\infty$ (this includes random SRB measures when taking $\phi_\om=-\ln\text{Jac}(T_\om)$). Denote $\Delta=\{(\om,x): \om\in\Omega, x\in\Lambda_\om\}\subset\Omega\times M$.

By combining Theorem \ref{MT3} and Theorem \ref{SFT1}, we obtain the following result.
\begin{theorem}\label{ThmRDS}
Let $G\colon \Delta\to\bbR$ be a measurable function such that for $\bbP$-a.e. $\om\in \Omega$ the function $G(\om,\cdot)$ is H\"{o}lder continuous with exponent $\beta>0$ and $\esssup_{\om \in \Om}\|G(\om, \cdot)\|_\beta<\infty$. Suppose that there is a measurable function $H\colon \Delta\to\bbR$ such that 
$$
G=H\circ\tau-H.
$$
Then for $\bbP$-a.e. $\om\in \Omega$ the function $H(\om,\cdot)$ is H\"older continuous with exponent $\beta$ and with H\"older constants bounded by some constant $C'$. Moreover, $\esssup_{\om\in\Omega}\|H(\om,\cdot)-\int H(\om,x)\,dm_\om(x)\|_{\beta}<\infty$.  
\end{theorem}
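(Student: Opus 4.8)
The plan is to combine Theorem~\ref{SFT1} with Theorem~\ref{MT3}: the former, applied to a two-sided symbolic model, will supply the \emph{qualitative} fact that the coboundary, once lifted to symbols, is uniformly continuous, and the latter will turn this into the \emph{sharp} H\"older regularity of $H$ on the sets $\Lambda_\om$, with $\om$-independent constants. First I would fix the symbolic picture. Since $T$ restricted to the topologically mixing basic hyperbolic set $\Lambda$ is, via a Markov partition, semiconjugate to a fixed topologically mixing two-sided subshift of finite type $\Sigma$ with left shift $S$ and semiconjugacy $\pi\colon\Sigma\to\Lambda$, and since the alphabet and transition matrix do not vary with $\om$, the space $\Sigma$ fits the random two-sided SFT framework of Section~\ref{SFT sec}. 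For $\mathbb P$-a.e.\ $\om$ the structural-stability homeomorphisms $h_\om\colon\Lambda\to\Lambda_\om$ (uniformly H\"older and measurable in $\om$) provided by the random analogue of Theorem~\ref{ThStability} give, on setting $\Pi_\om:=h_\om\circ\pi\colon\Sigma\to\Lambda_\om$, surjections with $T_\om\circ\Pi_\om=\Pi_{\sigma\om}\circ S$, while the random Gibbs measure $m_\om$ on $\Lambda_\om$ is the push-forward under $\Pi_\om$ of the random Gibbs measure on $\Sigma$ generated by the potential $\phi_\om\circ\Pi_\om$. Lifting the data, put $f(\om,x):=G(\om,\Pi_\om(x))$ and $\mathsf H(\om,x):=H(\om,\Pi_\om(x))$ for $(\om,x)\in\Omega\times\Sigma$. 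Because the $h_\om$ are uniformly H\"older and $\pi$ is H\"older, $f(\om,\cdot)$ and the symbolic potential $\phi_\om\circ\Pi_\om$ are both H\"older with a common exponent $\beta'>0$, uniformly in $\om$, with $\esssup_\om\|f(\om,\cdot)\|_{\beta'}<\infty$; and from $G=H\circ\tau-H$ together with $T_\om\circ\Pi_\om=\Pi_{\sigma\om}\circ S$ one reads off $f=\mathsf H\circ\cS-\mathsf H$, where $\cS(\om,x)=(\sigma\om,Sx)$.

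Next I would apply Theorem~\ref{SFT1} to this lifted system (with $\beta'$ in the role of $\beta$). It yields that $\mathsf H(\om,\cdot)$ is $(\beta'/2)$-H\"older for $\mathbb P$-a.e.\ $\om$, with $\esssup_\om$ of the associated H\"older constants finite; hence, on a $\sigma$-invariant full-measure set of $\om$, the sequence $\mathsf H(\sigma^j\om,\cdot)=H(\sigma^j\om,\cdot)\circ\Pi_{\sigma^j\om}$, $j\in\bbN_0$, is uniformly continuous on $\Sigma$. Now fix such an $\om$. The orbit $(\Lambda_{\sigma^j\om},T_{\sigma^j\om})_{j\in\bbZ}$ is a sequentially hyperbolic system in the sense of Section~\ref{Subsec2}, with symbolic semiconjugacies $\Pi_{\sigma^j\om}$ and with all hyperbolicity constants, stable/unstable contraction rates and Markov-partition data independent of $\om$. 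Applying Theorem~\ref{MT3} to $F_j:=G(\sigma^j\om,\cdot)$ (which is $\beta$-H\"older with H\"older constants bounded by some $\om$-independent $C'$) and to $H_j:=H(\sigma^j\om,\cdot)$ (which satisfies $F_j=H_{j+1}\circ T_{\sigma^j\om}-H_j$ and has uniformly continuous symbolic lifts by the previous step) yields a version of $H_j$ that is $\beta$-H\"older with H\"older constant at most $C''C'$, where $C''$ depends only on the common sequential hyperbolic structure, hence not on $\om$. Taking $j=0$ gives, for $\mathbb P$-a.e.\ $\om$, a $\beta$-H\"older version of $H(\om,\cdot)$ with H\"older seminorm $\le C''C'$, so $\esssup_\om\var_\om(H(\om,\cdot))<\infty$; and since $\sup_x\bigl|H(\om,x)-\int H(\om,\cdot)\,dm_\om\bigr|\le\sup_{x,y}|H(\om,x)-H(\om,y)|\le C''C'(\operatorname{diam}M)^\beta$, the full H\"older norm of the centred function $H(\om,\cdot)-\int H(\om,\cdot)\,dm_\om$ is bounded uniformly in $\om$, which is the assertion.

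The step I expect to be the main obstacle is making Theorems~\ref{SFT1} and~\ref{MT3} interlock correctly. Theorem~\ref{SFT1} only returns a H\"older exponent comparable to one half of the \emph{symbolic} exponent, and the symbolic exponent is already strictly below $\beta$ because of the limited H\"older regularity of the conjugacies $h_\om$; a purely symbolic argument would therefore lose regularity and could never recover the sharp exponent $\beta$ on $\Lambda_\om$. The sharp exponent emerges only through the geometric stable/unstable-holonomy argument underlying Theorem~\ref{MT3}, whose sole ``soft'' hypothesis on the coboundary is uniform continuity of its symbolic lift, which is precisely the output of Theorem~\ref{SFT1}. The genuinely delicate bookkeeping is thus to verify that the structural-stability data along $\mathbb P$-a.e.\ $\sigma$-orbit, and with them the constant $C''$ of Theorem~\ref{MT3} and all the $\esssup$ bounds, are uniform in $\om$; this uniformity is exactly what the random analogue of Theorem~\ref{ThStability} guarantees.
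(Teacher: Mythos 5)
Your proposal is correct and follows exactly the route the paper takes: the paper's entire proof of Theorem~\ref{ThmRDS} is the single sentence ``By combining Theorem~\ref{MT3} and Theorem~\ref{SFT1}, we obtain the following result,'' and your argument is precisely the intended combination — lift to the fixed symbolic model via $h_\om\circ\pi$, use Theorem~\ref{SFT1} to obtain uniform (H\"older, hence uniform) continuity of the symbolic lifts of $H(\sigma^j\om,\cdot)$ on a $\sigma$-invariant full-measure set, and then feed that into Theorem~\ref{MT3} along each orbit to recover the sharp exponent $\beta$ with $\om$-independent constants. Your closing remark about why the symbolic step alone cannot give the sharp exponent, and why the geometric holonomy argument of Theorem~\ref{MT3} is needed, correctly identifies the division of labour between the two theorems.
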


\medskip{\bf Acknowledgements.}
L. Backes would like to thank the University of Rijeka for its generous hospitality, where part of this work was developed.
L.~Backes was partially supported by a CNPq-Brazil PQ fellowship under Grants No. 307633/2021-7 and 304806/2024-2.
 This paper has been funded by European Union – NextGenerationEU-Statistical properties of random dynamical systems and other contributions to mathematical analysis and probability theory (Davor Dragi\v cevi\' c).

\end{document}